\newtheorem{theorem}{Theorem}[section]
\newtheorem{proposition}[theorem]{Proposition}
\newtheorem{lemma}[theorem]{Lemma}
\newtheorem{corollary}[theorem]{Corollary}
\newtheorem{question}[theorem]{Question}
\theoremstyle{definition}
\newtheorem{definition}[theorem]{Definition}
\newtheorem{remark}[theorem]{Remark}
\numberwithin{equation}{section}
\begin{document}
\baselineskip=15.5pt

\title[Tannakian subcategories of integrable holomorphic connections]{On certain Tannakian categories
of integrable connections over K\"ahler manifolds}

\author[I. Biswas]{Indranil Biswas}

\address{School of Mathematics, Tata Institute of Fundamental
Research, Homi Bhabha Road, Mumbai 400005, India}

\email{indranil@math.tifr.res.in}

\author[J. P. dos Santos]{Jo\~ao Pedro dos Santos}

\address{Institut de Math\'ematiques de Jussieu -- Paris Rive Gauche, 4 place Jussieu,
Case 247, 75252 Paris Cedex 5, France}

\email{joao\_pedro.dos\_santos@yahoo.com}

\author[S. Dumitrescu]{Sorin Dumitrescu}

\address{Universit\'e C\^ote d'Azur, CNRS, LJAD, France}

\email{dumitres@unice.fr}

\author[S. Heller]{Sebastian Heller}

\address{Mathematisches Institut, Ruprecht-Karls-Universit\"at Heidelberg,
Im Neuenheimer Feld 205, 69120 Heidelberg, Germany}

\email{seb.heller@gmail.com}

\subjclass[2010]{53C07, 14C34, 16D90, 14K20}

\keywords{Integrable holomorphic connection, Higgs bundle, neutral Tannakian category, complex torus,
Torelli theorem.}

\date{}

\begin{abstract}
Given a compact K\"ahler manifold $X$, it is shown that pairs of the form $(E,\, D)$, where
$E$ is a trivial holomorphic vector bundle on $X$, and $D$ is an integrable holomorphic connection
on $E$, produce a neutral Tannakian category. The corresponding pro-algebraic affine group scheme
is studied. In particular, it is shown that this pro-algebraic affine group scheme for
a compact Riemann surface determines uniquely the isomorphism class of the Riemann surface.
\end{abstract}

\maketitle

\tableofcontents

\section{Introduction}\label{se1}

A question of Ghys asks the following: Is there a pair of the form $(M,\, D)$, where $M$ is a compact Riemann 
surface of genus at least two, and $D$ is an irreducible holomorphic $\text{SL}(2, {\mathbb C})$--connection on 
the rank two trivial holomorphic vector bundle ${\mathcal O}^{\oplus 2}_M$, such that the image of the 
monodromy homomorphism for $D$ is contained in a cocompact lattice of $\text{SL}(2, {\mathbb C})$. The 
motivation for this question comes from the study of compact quotients of $\text{SL}(2, {\mathbb C})$ by 
lattices. Such quotients are compact non-K\"ahler manifolds. While they can't contain a complex
surface \cite[p.~239, Theorem 2]{HM}, it is not known whether they can contain 
compact Riemann surfaces of genus $g \,>\,1$. A positive answer to Ghys' question would provide a nontrivial 
holomorphic map from the Riemann surface $M$ to the quotient of $\text{SL}(2, {\mathbb C})$ by the cocompact
lattice containing the image of the monodromy homomorphism for $D$. In fact the two
problems are equivalent (see \cite{CDHL} for explanations for the origin of Ghys' question).

Let us also mention that a related question of characterizing rank two holomorphic vector bundles $\mathcal V$ 
over a compact Riemann surface, such that for some holomorphic connection on $\mathcal V$ the image of the 
associated monodromy homomorphism is Fuchsian, was raised in \cite[p.~556]{Ka} (there this
question is attributed to Bers).

The above, still open questions of Bers and Ghys, and some related questions in \cite{CDHL, Ka}, motivated us 
to investigate the holomorphic connections on a trivial holomorphic vector bundle. Answering a question asked in 
\cite{CDHL}, examples of irreducible holomorphic $\text{SL}(2, {\mathbb C})$--connection with
real monodromy, on the trivial holomorphic 
$\text{SL}(2, {\mathbb C})$--bundle over a compact Riemann surface, were constructed in \cite{BDH}.

Here we consider integrable holomorphic connections on trivial holomorphic vector bundles over a compact K\"ahler 
manifold $X$. The purpose of the above discussions is to demonstrate the significance of holomorphic connections on 
the trivial holomorphic vector bundles. While highly motivated by the above mentioned questions of Bers and Ghys, 
it should be clarified that the present work does not shed particular light on those questions which remain open, 
but provides a study of the relevant category of integrable holomorphic connections on trivial holomorphic vector 
bundles.

Once we fix a base point $x_0\, \in\, X$ in order
to define a fiber functor, 
using the Tannakian category theory it is shown that the category of
integrable holomorphic connections on trivial holomorphic vector bundles produces a quotient of 
the pro-algebraic completion $\varpi(X,\, x_0)$ of the fundamental group $\pi_1(X,\, x_0)$ (the
details are in Section \ref{se3});
this quotient of $\varpi(X,\, x_0)$ is denoted by ${\Theta}(X,\, x_0)$. Then we prove a 
Torelli type Theorem with respect to ${\Theta}(X,\, x_0)$, for compact Riemann surfaces and also for compact 
complex tori.

The main results of Section \ref{se3} and Section \ref{se4} are the following:
\begin{enumerate}
\item \textit{For compact K\"ahler manifolds $X$ and $Y$,
the natural homomorphism $$\Theta (X,\,x_0)\times\Theta (Y,\,y_0)\,\longrightarrow
\,\Theta (X\times Y,\, (x_0,\,y_0))$$ is an isomorphism.} (See Proposition \ref{prop1}.)

\item \textit{Let $\beta\, : \, X\, \longrightarrow\, Y$ be an orientation preserving diffeomorphism
between compact Riemann surfaces such that the corresponding homomorphism
$$
\beta_\natural\, :\, \varpi(X,\, x_0)\, \longrightarrow\, \varpi(Y,\, \beta(x_0))
$$
descends to a homomorphism from ${\Theta}(X,\, x_0)$ to ${\Theta}(Y,\, \beta(x_0))$. Then the
two Riemann surfaces $X$ and $Y$ are isomorphic.} (See Theorem \ref{thm1}.)

\item \textit{Let $\varphi\, :\, {\mathbb T}\, \longrightarrow\, {\mathbb S}$ be a diffeomorphism
between two compact complex tori such that the corresponding homomorphism
$$
\varphi_*\, :\, \varpi({\mathbb T},\, x_0)\, \longrightarrow\,
\varpi({\mathbb S},\, \varphi(x_0))
$$
descends to a homomorphism from ${\Theta}({\mathbb T},\, x_0)$ to ${\Theta}({\mathbb S},\, \varphi(x_0))$.
Then there is a biholomorphism
$$
{\mathbb T}\, \longrightarrow\, {\mathbb S}
$$
which is homotopic to the map $\varphi$.} (See Proposition \ref{prop2}.)
\end{enumerate}

In Section \ref{lastsec} we consider integrable holomorphic connections on
holomorphic vector bundles over $X$ which decompose into a 
direct sum of holomorphic line bundles. Using the Tannakian category theory in a similar way, we show that 
this category also produces a quotient ${\Delta}(X,\, x_0)$ of the pro-algebraic completion $\varpi(X,\, x_0)$ 
of the fundamental group $\pi_1(X,\, x_0)$. Then we adapt our methods in Section \ref{se4}
in order to prove the same Torelli type theorems for ${\Delta}(X,\, x_0)$.

As is well-known, the category of vector bundles easily fails to be abelian, but as observed as far back as 
\cite[Proposition 3.1]{seshadri67}, semi-stability can be brought in to mend this failure and produce interesting 
{\it abelian} categories of vector bundles. Section \ref{07.05.2020--1} connects the group scheme $\Theta$ to such 
an idea by means of the category of pseudostable vector bundles (stemming from Simpsons's foundational work). We 
produce group schemes $\Sigma(X,\,x_0)$, accounting for connections on pseudostables, and $\pi^{\rm S}(X,\,x_0)$, 
accounting for pseudostables solely, and then establish a link between these and $\Theta$; see Theorem 
\ref{04.05.2020--5}.

\section{Neutral Tannakian categories for a K\"ahler manifold}\label{se2}

Let $X$ be a compact connected K\"ahler manifold of complex dimension $d$. Fix a K\"ahler form $\omega$ on $X$.
The \textit{degree} of a torsionfree coherent analytic sheaf $F$ on $X$ is defined to be
$$
\text{degree}(F)\, :=\, (c_1(\det F)\cup\omega^{d-1})\cap [X]\, \in\, \mathbb R\, . 
$$
Here and elsewhere, we write $c_i$ for the $i$--th Chern class in $H^{2i}_{dR}(X,\,\mathbb R)$ and
define the holomorphic line bundle $\det F$ following \cite[Ch.~V, \S~6]{Ko}. The real number
$$
\mu(F)\,:=\, \frac{\text{degree}(F)}{\text{rank}(F)}
$$
is called the \textit{slope} of $F$.

Fix a base point $x_0\, \in\, X$. Let
\begin{equation}\label{e5}
\phi\, :\, \pi_1(X,\, x_0) \, \longrightarrow\, \varpi(X,\, x_0)
\end{equation}
be the pro-algebraic completion of the fundamental group $\pi_1(X,\, x_0)$. We recall that 
$\varpi(X,\, x_0)$ is a pro-algebraic affine group scheme over $\mathbb C$ which is uniquely 
characterized by the following property: for any homomorphism
$$
\gamma\, :\, \pi_1(X,\, x_0)\, \longrightarrow\, G
$$
to a complex affine algebraic group $G$, there is a unique algebraic homomorphism
$$
\widehat{\gamma}\, :\, \varpi(X,\, x_0)\, \longrightarrow\, G
$$
such that $\widehat{\gamma}\circ\phi\,=\, \gamma$. There are mainly two equivalent constructions of 
$\varpi(X,\,x_0)$: one by means of the Tannakian category of finite dimensional representations of 
$\pi_1(X,\,x_0)$ and Tannakian duality \cite{DMOS} and the other by Freyd's adjoint functor theorem applied to the 
$\mathbb C$--points functor from group schemes to groups \cite[p.~84, 3.J]{freyd64}.

We shall recall from \cite[p.~70]{Si2} three neutral Tannakian categories associated to
the pointed K\"ahler manifold $(X,\, x_0)$ which, in particular, furnish a
Tannakian description of $\varpi(X,\, x_0)$.

Let ${\mathcal C}_{dR}(X)$ denote the category whose objects are pairs of the form
$(E,\, D)$, where $E$ is a holomorphic vector bundle on $X$ and $D$ is an integrable
holomorphic connection on $E$ (see \cite{At} for holomorphic connections). Morphisms from
$(E,\, D)$ to $(E',\, D')$ are all holomorphic homomorphisms of vector bundles
$h\, :\, E\, \longrightarrow\, E'$ that intertwine the connections $D$ and $D'$, meaning $D'\circ h\,=\,
(h\otimes{\rm Id}_{\Omega^1_X})\circ D$ as differential operators from $E$ to
$E'\otimes\Omega^1_X$ with $\Omega^1_X$ being the
holomorphic cotangent bundle of $X$. This category is equipped with the operators of direct sum, tensor
product and dualization. More precisely, ${\mathcal C}_{dR}(X)$ is a rigid abelian tensor category
(see \cite[p.~118, definition 1.14]{DMOS} for rigid abelian tensor categories).

\begin{remark}\label{remb}
It is known that any coherent analytic sheaf on $X$ admitting a holomorphic connection is locally
free (see \cite[p.~211, Proposition 1.7]{Bo}); it should be clarified that although
this proposition in \cite{Bo} is stated only for
${\mathcal O}_X$--modules with an integrable holomorphic connection (same as a ${\mathcal D}_X$--module),
its proof uses only the Leibniz rule which is valid for holomorphic connections.
\end{remark}

It is straightforward to check that the above category ${\mathcal C}_{dR}(X)$,
equipped with the faithful fiber functor that sends any object $(E,\, D)$ to the
fiber $E|_{x_0}$ over $x_0\, \in\, X$, defines a neutral Tannakian category (see \cite[p.~138,
Definition 2.19]{DMOS}, \cite{Sa}, \cite[p.~67]{Si2}, \cite[p.~76]{No} for neutral Tannakian category).
Given any neutral Tannakian category, a theorem of Saavedra Rivano associates
to it a pro-algebraic affine group scheme over $\mathbb C$ \cite[p.~130,
Theorem 2.11]{DMOS} (and the remark following \cite[p.~138,
Definition 2.19]{DMOS}), \cite{Sa}, \cite[p.~77, Theorem 1.1]{No}, \cite[p.~69]{Si2}.
Therefore, the neutral Tannakian category ${\mathcal C}_{dR}(X)$
corresponds to a pro-algebraic affine group scheme over $\mathbb C$.

Let ${\mathcal C}_{B}(X)$ denote the category whose objects are all finite dimensional complex representations 
of $\pi_1(X,\, x_0)$. Using the tautological fiber functor, it defines a neutral Tannakian category. The
pro-algebraic affine group scheme over $\mathbb C$ corresponding to ${\mathcal C}_{B}(X)$, by the
above mentioned theorem of Saavedra Rivano (\cite{Sa}, \cite[p.~130,
Theorem 2.11]{DMOS}), is the group scheme $\varpi(X,\, x_0)$ in \eqref{e5} \cite[p.~69, Lemma 6.1]{Si2}.

A Higgs bundle on $X$ is a pair of the form $(E,\, \theta)$, where $E$ is a holomorphic vector bundle
of $X$ and $\theta\, \in\, H^0(X,\, \text{End}(E)\otimes\Omega^1_X)$ with $\theta\bigwedge\theta\,=\, 0$
\cite{Si1}, \cite{Si2}; the holomorphic section $\theta$ is called a
\textit{Higgs field} on $E$. A Higgs bundle $(E,\, \theta)$ is called \textit{stable} (respectively,
\textit{semistable}) if
$$
\mu(F)\, <\, \mu(E) \ \ \text{(respectively, }\,~\mu(F)\, \leq\, \mu(E)\text{)}
$$
for every coherent analytic subsheaf $F\, \subset\, E$ with $0\, <\, \text{rank}(F)\, <\, \text{rank}(E)$
and $$\theta(F)\, \subset\, F\otimes\Omega^1_X\, .$$ A Higgs bundle $(E,\, \theta)$ is called \textit{polystable}
if it is a direct sum of stable Higgs bundles of same slope.
 
Let ${\mathcal C}_{Dol}(X)$ denote the category whose objects are Higgs bundles $(E,\, \theta)$
such that $ch_2(E)\cup \omega^{d-2}\,=\, 0$ and there is a filtration of holomorphic subbundles
$$
0\, =\, E_0\, \subset\, E_1\,\subset\, \cdots \, \subset\, E_i \,\subset\,\cdots\, \subset\,
E_{\ell-1} \,\subset\, E_\ell\,=\, E
$$
satisfying the following three conditions:
\begin{enumerate}
\item{} $\theta(E_j)\, \subset\, E_j\otimes\Omega^1_X$ for all $1\, \leq\, j\, \leq\, \ell$,

\item $\text{degree}(E_j/E_{j-1})\,=\, 0$ for all $1\, \leq\, j\, \leq\, \ell$, and 

\item the Higgs bundle $(E_j/E_{j-1}, \, \theta)$ is stable
for all $1\, \leq\, j\, \leq\, \ell$ (with a mild abuse abuse of notation, the Higgs field on $E_j/E_{j-1}$
induced by $\theta$ is also denoted by $\theta$).
\end{enumerate}

Notice that the above second condition implies that $\text{degree}(E)\,=\,0$.

In \cite{BG} such Higgs bundles are called pseudostable. 

A homomorphism from $(E,\, \theta)$
to $(E',\, \theta')$ is a holomorphic homomorphism
$$
h\, :\, E\, \longrightarrow\, E'
$$
such that $\theta'\circ h\,=\, (h\otimes{\rm Id}_{\Omega^1_X})\circ\theta$ as homomorphisms from
$E$ to $E'\otimes\Omega^1_X$. It is known that ${\mathcal C}_{Dol}(X)$
is a rigid abelian tensor category \cite[p.~70]{Si2}. This category ${\mathcal C}_{Dol}(X)$ admits
the faithful fiber functor that sends any object $(E,\, \theta)$ to the fiber $E|_{x_0}$. In other words,
${\mathcal C}_{Dol}(X)$ is a neutral Tannakian category.

The two categories ${\mathcal C}_{dR}(X)$ and ${\mathcal C}_{B}(X)$ are equivalent by the
Riemann--Hilbert correspondence that assigns to a flat connection the corresponding monodromy
representation. Using fundamental theorems of Corlette, \cite{Co}, and Simpson, \cite{Si1},
in \cite{Si2} Simpson proved that
the category ${\mathcal C}_{Dol}(X)$ is equivalent to ${\mathcal C}_{dR}(X)$ (see \cite[p.~36,
Lemma 3.5]{Si2}, \cite[p.~70]{Si2}).
Therefore, all these three neutral Tannakian categories, namely ${\mathcal C}_{dR}(X)$,
${\mathcal C}_{B}(X)$
and ${\mathcal C}_{Dol}(X)$, produce the same pro-algebraic affine group scheme over $\mathbb C$
using the theorem of Saavedra Rivano mentioned earlier
(\cite{Sa}, \cite[p.~130, Theorem 2.11]{DMOS}). In other words, each of
these three neutral Tannakian categories produces the
pro-algebraic affine group scheme $\varpi(X,\, x_0)$ in \eqref{e5}.

It should be mentioned that when $X$ is a smooth complex projective variety, and the cohomology class
of the closed form $\omega$ is rational, then the
category ${\mathcal C}_{Dol}(X)$ coincides with the category of semistable Higgs bundles
$(E,\, \theta)$ on $X$ with $ch_2(E)\cup\omega^{d-2}\,=\, 0$ and $\text{degree}(E)\,=\, 0$
\cite[p.~39, Theorem 2]{Si2}.

\section{The Tannakian subcategory $\mathcal T_{dR}$ of ${\mathcal C}_{dR}$}\label{se3}

\subsection{The Tannakian category and the associated group scheme}\label{03.05.2020--4}

As before, $X$ is a compact connected K\"ahler manifold of complex dimension $d$.

\begin{definition}
Let ${\mathcal T}_{dR}(X)$ be the full subcategory of ${\mathcal C}_{dR}(X)$
(defined in Section \ref{se2}) whose objects are
all the couples $(E,\, D)$ satisfying the condition that the holomorphic vector bundle
$E$ is holomorphically trivial.
\end{definition}

Clearly, ${\mathcal T}_{dR}(X)$ is stable under tensor products and duals. The identity object
$({\mathcal O}_X,\, {\rm d})$, where ${\rm d}$ is the de Rham differential, certainly is an object of
${\mathcal T}_{dR}(X)$. In addition ${\mathcal T}_{dR}(X)$ is stable under quotients, as shown by the next proposition.

\begin{proposition}\label{23.04.2020--1}
Let $(E,\,D)\,=\, ({\mathcal O}^{\oplus r}_X,\, D)$ be an object of ${\mathcal T}_{dR}(X)$ and
$$q\,:\,({\mathcal O}^{\oplus r}_X,\,D)\,\longrightarrow\,(E',\,D')$$ an
epimorphism in ${\mathcal C}_{dR}(X)$. Then the holomorphic vector bundle
$E'$ is also trivial.
\end{proposition}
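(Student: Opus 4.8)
The plan is to use the fact that an epimorphism of flat bundles $q\colon(\mathcal O_X^{\oplus r},D)\to(E',D')$ is flat, hence its monodromy is a quotient representation of a trivial-connection bundle. Concretely, since $(E,D)=(\mathcal O_X^{\oplus r},D)$ underlies an object with trivial underlying bundle, the flat sections of $(E,D)$ over a neighbourhood of $x_0$ span $E|_{x_0}$ — indeed, a global frame gives a surjection onto the fiber, but more to the point, since $E$ is trivial, the connection $D$ is a \emph{trivial}-bundle connection and one can analyse it through the monodromy representation $\rho\colon\pi_1(X,x_0)\to\mathrm{GL}(r,\mathbb C)$ whose associated flat bundle is $(E,D)$. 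Under the Riemann--Hilbert correspondence $q$ corresponds to a surjection of $\pi_1(X,x_0)$-representations $\rho\twoheadrightarrow\rho'$, and $(E',D')$ is the flat bundle associated to $\rho'$. So the entire question becomes: if the flat bundle attached to $\rho$ has trivial underlying holomorphic bundle, does the flat bundle attached to any quotient representation $\rho'$ also have trivial underlying holomorphic bundle?

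First I would reduce to a statement about the Higgs/harmonic side. By Simpson's equivalence recalled in Section~\ref{se2}, the flat bundle $(E,D)$ corresponds to a polystable Higgs bundle; triviality of $E$ as a holomorphic bundle is equivalent to the corresponding flat bundle being a \emph{unitary} flat bundle up to the Higgs field, but more directly: a holomorphic vector bundle on a compact K\"ahler manifold that admits a holomorphic (not necessarily flat) connection has vanishing Chern classes (Atiyah), and conversely one wants to control when it is actually trivial. The key structural input I would invoke is that a flat bundle on $X$ whose underlying holomorphic bundle is trivial has monodromy representation whose semisimplification is the monodromy of a \emph{unitary} representation — equivalently, the associated Higgs bundle has zero Higgs field and is a direct sum of degree-zero stable, hence (by the trivial-bundle hypothesis forcing $h^0$ to be maximal) trivial pieces; concretely the semisimplification $\rho^{\mathrm{ss}}$ of $\rho$ is trivial as a representation because $H^0(X,E)=\mathbb C^r$ forces $r$ linearly independent flat sections after passing to the associated graded. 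Granting that $\rho^{\mathrm{ss}}$ is the trivial representation, any quotient $\rho'$ of $\rho$ has $(\rho')^{\mathrm{ss}}$ a quotient of $\rho^{\mathrm{ss}}$, hence also trivial; but a representation with trivial semisimplification is unipotent, and the flat bundle attached to a unipotent representation is a successive extension of $(\mathcal O_X,\mathrm d)$ by itself. Finally, an iterated self-extension of $\mathcal O_X$ in $\mathcal C_{dR}(X)$ need not a priori be a trivial bundle — but it is, because the underlying holomorphic bundle is an iterated extension of $\mathcal O_X$ by $\mathcal O_X$ and, crucially, it carries a holomorphic connection; I would finish by the argument that a holomorphic vector bundle which is a successive self-extension of $\mathcal O_X$ and admits a holomorphic connection must be trivial — this follows since such a bundle is built from classes in $H^1(X,\mathcal O_X)$ that must vanish for a connection to exist, the obstruction being precisely detected by the fact that a nontrivial such extension has $\det$ trivial but is itself nonsplit, yet its endomorphism-twisted Atiyah class cannot vanish. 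A cleaner route to close this last step: the holonomy of $D'$ is the unipotent representation $\rho'$, and by Deligne/Hain the associated bundle sits in the category generated by $\mathcal O_X$; since all extensions $0\to\mathcal O_X\to?\to\mathcal O_X\to0$ \emph{admitting a flat connection} are classified by $H^1_{dR}(X)$ mapping to $H^1(X,\mathcal O_X)$, and a trivial-bundle summand in $E$ forces the relevant de Rham classes to be of type $(1,0)$, one concludes $E'$ is trivial.

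I expect the main obstacle to be the last step: showing that an object of $\mathcal T_{dR}(X)$ which is a unipotent (iterated self-)extension of the unit object has trivial \emph{underlying holomorphic bundle}, rather than merely a bundle with vanishing Chern classes. The subtlety is that on a general compact K\"ahler manifold there exist nontrivial holomorphic bundles that are iterated extensions of $\mathcal O_X$ by $\mathcal O_X$ (coming from nonzero $H^1(X,\mathcal O_X)$), and one must use the existence of the holomorphic \emph{connection} $D'$ — together with the fact that $(E',D')$ is a quotient of a globally trivial flat bundle — to kill precisely those Dolbeault obstruction classes. I would handle this by tracking the extension class through the Hodge decomposition $H^1_{dR}(X,\mathbb C)=H^{1,0}\oplus H^{0,1}$: the class of $(E,D)=(\mathcal O_X^{\oplus r},D)$ in the unipotent setting lies in the image of $H^0(X,\Omega^1_X)$ (the "$(1,0)$-part"), because the holomorphic triviality of $E$ means the $\overline\partial$-operator is the standard one, so the flat connection is $\mathrm d + A$ with $A$ a holomorphic matrix-valued $1$-form; hence its monodromy, and that of any quotient, is again realized on a holomorphically trivial bundle with connection $\mathrm d + A'$, giving $E'$ trivial directly. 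In fact this last observation may be promoted to \emph{the} proof: a connection on $\mathcal O_X^{\oplus r}$ is $\mathrm d + A$ for a global holomorphic $\mathrm{End}(\mathbb C^r)$-valued $1$-form $A$ with $\mathrm dA + A\wedge A = 0$; an epimorphism to $(E',D')$ is a surjective constant-rank $A$-flat bundle map, whose kernel $K$ is an $A$-invariant holomorphic subbundle, and the quotient of the trivial bundle by a subbundle that is itself globally generated by flat-ish sections is again trivial — the cleanest path being to show $K$ is itself trivial (as a subbundle of a trivial bundle invariant under a connection it has nonnegative degree summands forced to degree zero, and the trivial-bundle hypothesis plus Remark~\ref{remb} give local freeness and then triviality of $K$), and then $E'\cong\mathcal O_X^{\oplus r}/K$ fits in a split-able sequence of trivial bundles.
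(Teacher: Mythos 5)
Your main line of argument rests on a claim that is false, and in fact contradicts the very examples that motivate this paper. You assert that the semisimplification $\rho^{\mathrm{ss}}$ of the monodromy of $(\mathcal O_X^{\oplus r},D)$ is the trivial representation, ``because $H^0(X,E)=\mathbb C^r$ forces $r$ linearly independent flat sections.'' Holomorphic sections of a trivial bundle need not be flat for $D$: as recalled in the introduction, there exist \emph{irreducible} flat $\mathrm{SL}(2,\mathbb C)$--connections on $\mathcal O_M^{\oplus 2}$ over a compact Riemann surface of genus $\ge 2$ (see \cite{BDH}, \cite{CDHL}), for which $\rho^{\mathrm{ss}}=\rho$ is very far from trivial and there are no nonzero flat sections at all. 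Consequently the entire reduction to the unipotent case collapses, and with it the subsequent discussion of iterated self-extensions of $\mathcal O_X$. (That discussion contains a further error: on a compact complex torus a nonsplit extension of $\mathcal O_X$ by $\mathcal O_X$ given by a nonzero class in $H^1(X,\mathcal O_X)$ does carry a flat connection, so ``a successive self-extension of $\mathcal O_X$ admitting a holomorphic connection must be trivial'' is false in general; what saves the day in the proposition is precisely the hypothesis that $E'$ is a \emph{quotient} of a trivial bundle, which you cannot discard.)

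Your final paragraph contains the germ of a workable alternative, but the two decisive steps are exactly the ones left unproved. First, to pass from ``$K=\ker q$ is a $D$-invariant, hence locally free, subsheaf with $c_1(K)=0$'' to ``$K$ is trivial'' you need the polystability of $\mathcal O_X^{\oplus r}$ (a saturated degree-zero subsheaf of a polystable degree-zero bundle is a direct summand, and a stable degree-zero subsheaf of $\mathcal O_X^{\oplus r}$ is isomorphic to $\mathcal O_X$); this is the argument the paper uses for Lemma \ref{5b}, not for this proposition, and it is not supplied by Remark \ref{remb}. Second, you assert without proof that the sequence $0\to K\to\mathcal O_X^{\oplus r}\to E'\to 0$ splits; this again follows from $K$ being a direct summand by polystability, but must be said. (Also, a subsheaf of a trivial bundle has \emph{nonpositive} degree, not nonnegative as you write; the connection then forces degree zero.) The paper's actual proof is entirely different and shorter: a quotient $q$ of $\mathcal O_X^{\oplus r}$ is globally generated, hence classified by a holomorphic map $f\colon X\to\mathrm{Gr}(r,r')$ with $E'=f^*U$; since $c_1(E')=0$ and $\det U$ is ample, $f$ is constant, so $E'$ is trivial. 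If you wish to keep your route, complete the polystability argument; otherwise the Grassmannian argument avoids monodromy considerations altogether.
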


\begin{proof}
We note that $c_1(E')\,=\,0$ because $E'$ carries an integrable connection
\cite[pp.~192--193, Theorem 4]{At}, \cite[p.~141, Proposition]{griffiths-harris78}. Let $r'$
be the rank of $E'$, and introduce ${\rm Gr}(r,r')$, the Grassmann manifold of
$r'$ dimensional quotients of ${\mathbb C}^r$. Note
that the trivial bundle ${\mathcal O}^{\oplus r}_{{\rm Gr}(r,r')}$ comes with a tautological
quotient of rank $r'$, call it
${\mathcal O}^{\oplus r}_{{\rm Gr}(r,r')}\,\longrightarrow\, U$. Then, $q$ induces a morphism $f\,:\,
X\,\longrightarrow\, {\rm Gr}(r,r')$ such that $E'\,=\,f^*U$.
In particular, we have $c_1(f^*(\det U))\,=\, c_1(E')\,=\, 0$. The
fact that $\det(U)$ is ample implies that $f$ is a constant map \cite[p. 177]{griffiths-harris78}. Hence, $E'$
is holomorphically trivial.
\end{proof}

\begin{corollary}
The full subcategory ${\mathcal T}_{dR}(X)$ of ${\mathcal C}_{dR}(X)$ is an abelian subcategory.
\end{corollary}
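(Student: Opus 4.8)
The plan is to check that the inclusion $\mathcal{T}_{dR}(X)\hookrightarrow\mathcal{C}_{dR}(X)$ exhibits $\mathcal{T}_{dR}(X)$ as an abelian subcategory of the abelian category $\mathcal{C}_{dR}(X)$; concretely, since $\mathcal{T}_{dR}(X)$ is already a full subcategory, it suffices to verify that it contains a zero object and is stable under the formation, inside $\mathcal{C}_{dR}(X)$, of finite direct sums, kernels and cokernels. It is a standard fact that these closure properties force $\mathcal{T}_{dR}(X)$ to be abelian with the inherited additive structure, the inclusion functor then being exact. The zero object $(0,\,0)$ has holomorphically trivial underlying bundle, and a finite direct sum of holomorphically trivial bundles is holomorphically trivial, so those two requirements are immediate.

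For cokernels I would argue as follows. Let $h\,:\,(E,\,D)\,\longrightarrow\,(E',\,D')$ be a morphism with $(E,\,D),\,(E',\,D')\,\in\,\mathcal{T}_{dR}(X)$, and let $(Q,\,D_Q)$ be its cokernel in $\mathcal{C}_{dR}(X)$. By Remark \ref{remb} the coherent sheaf $Q$ is a holomorphic vector bundle, and the canonical morphism $(E',\,D')\,\longrightarrow\,(Q,\,D_Q)$ is an epimorphism in $\mathcal{C}_{dR}(X)$; since $E'$ is holomorphically trivial, Proposition \ref{23.04.2020--1} applies and shows that $Q$ is holomorphically trivial. Hence $(Q,\,D_Q)\,\in\,\mathcal{T}_{dR}(X)$.

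For kernels I would exploit the rigid tensor structure of $\mathcal{C}_{dR}(X)$. Dualization $(-)^{\vee}$ is a contravariant self-equivalence of $\mathcal{C}_{dR}(X)$, with $(-)^{\vee\vee}$ naturally isomorphic to the identity, and, being an (anti-)equivalence of abelian categories, it is exact and therefore interchanges kernels and cokernels. Given $h$ as above with kernel $(K,\,D_K)$ in $\mathcal{C}_{dR}(X)$, the dual morphism $h^{\vee}\,:\,(E',\,D')^{\vee}\,\longrightarrow\,(E,\,D)^{\vee}$ has both source and target in $\mathcal{T}_{dR}(X)$, because the dual of a holomorphically trivial bundle is holomorphically trivial, and $(K,\,D_K)^{\vee}$ is the cokernel of $h^{\vee}$ in $\mathcal{C}_{dR}(X)$. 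By the previous paragraph this cokernel lies in $\mathcal{T}_{dR}(X)$, and dualizing once more gives $(K,\,D_K)\,\in\,\mathcal{T}_{dR}(X)$.

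The step I expect to be the genuine point is the treatment of kernels: unlike the case of quotients, there is no direct Grassmannian/ampleness argument à la Proposition \ref{23.04.2020--1} showing that a subbundle-with-connection of a trivial bundle-with-connection is trivial, so one really has to pass through the rigid tensor structure and reduce it to the already-established cokernel case. Everything else amounts to bookkeeping with the general properties of full abelian subcategories and with the elementary behaviour of holomorphically trivial bundles under $\oplus$ and $(-)^{\vee}$.
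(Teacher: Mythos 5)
Your proof is correct and follows essentially the same route as the paper: cokernels are handled via Remark \ref{remb} and Proposition \ref{23.04.2020--1}, kernels by dualizing to reduce to the cokernel case, and the conclusion is drawn from the standard criterion for full abelian subcategories. The only cosmetic difference is that the paper also records explicitly that the image of a morphism is trivial, which your argument subsumes.
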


\begin{proof}
Let $\alpha\,:\,(E,\,D)\,\longrightarrow\,(E_1,\,D_1)$ be an arrow in ${\mathcal T}_{dR}(X)$. Now,
$\mathrm{Image}(\alpha)$ is a sub-connection, call it $(I,\,D_1)$, of $(E_1,\,D_1)$ and we have an
epimorphism $$\alpha\,:\,(E,\,D) \,\longrightarrow\,(I,\,D_1)$$ in ${\mathcal C}_{dR}(X)$. We note that
$I$ is locally free (see Remark \ref{remb}). We conclude
from Proposition \ref{23.04.2020--1} that $(I,\, D_1)$ is an object of
${\mathcal T}_{dR}(X)$. Again from Proposition \ref{23.04.2020--1} it follows that
$\mathrm{coker}(\alpha)$ is an object of ${\mathcal T}_{dR}(X)$ because it is a quotient of an
object of ${\mathcal T}_{dR}(X)$.

To prove that $\mathrm{kernel}(\alpha)$ is in ${\mathcal T}_{dR}(X)$, consider the dual connections
$(E^*,\,D^*)$ and $(E^*_1,\,D^*_1)$ of $(E,\,D)$ and $(E_1,\,D_1)$ respectively. Let
$$
\alpha^*\,:\,(E^*_1,\,D^*_1)\,\longrightarrow\,(E^*,\,D^*)
$$
be the dual of the homomorphism $\alpha$. The above argument gives that $\alpha^*(E^*_1)\, \subset\,
E^*$ is a trivial subbundle, and the quotient $E^*/\alpha^*(E^*_1)$ is also a trivial holomorphic
vector bundle. Hence
$$
\text{kernel}(\alpha)\,=\, (E^*/\alpha^*(E^*_1))^*
$$
is a trivial holomorphic vector bundle. Therefore, $\mathrm{kernel}(\alpha)$,
equipped with the integrable holomorphic connection induced by $D$, is also in ${\mathcal T}_{dR}(X)$.

Hence, the standard criterion for a subcategory to be an abelian subcategory can be applied \cite[Theorem 3.41]{freyd64}.
\end{proof}

Let
\begin{equation}\label{e2}
{\Theta}(X,\, x_0)
\end{equation}
be the affine group scheme over $\mathbb C$ corresponding
to ${\mathcal T}_{dR}(X)$ via \cite[p.~130, Theorem 2.11]{DMOS} by means of the exact
functor $(E,\,D)\,\longmapsto\, E|_{x_0}$.
From Proposition \ref{23.04.2020--1} and the standard criterion \cite[Proposition 2.21, p.139]{DMOS}
we conclude that the natural arrow of group schemes
\begin{equation}\label{e6}
\mathbf{q}_X\, :\, \varpi(X,\, x_0)\, \longrightarrow\, {\Theta}(X,\, x_0)
\end{equation}
is a quotient homomorphism, where $\varpi(X,\, x_0)$ is the group scheme in \eqref{e5}.

Clearly, for any given holomorphic map $f:Y\longrightarrow X$, we obtain tensor functors 
\[
f^\#\,: \, \mathcal C_{dR}(X)\,\longrightarrow\,\mathcal C_{dR}(Y)
\]
and 
\[
f^\#\,:\, \mathcal T_{dR}(X)\,\longrightarrow\,\mathcal T_{dR}(Y)
\]
defined by $(E,\, D)\, \longmapsto\, (f^*E,\, f^*D)$.
If, in addition, we let $y_0\in Y$ be a point which is taken to $x_0$, we derive
homomorphisms of group schemes
\[
f_\natural\,:\,\varpi(Y,\,y_0)\,\longrightarrow\,\varpi(X,\,x_0)
\]
and 
\[
f_\natural:\Theta(Y,\,y_0)\longrightarrow\Theta(X,\,x_0)\, .
\]

Using the equivalence of categories between ${\mathcal C}_{Dol}(X)$ and ${\mathcal C}_{dR}(X)$ mentioned
in Section \ref{se2}, the above subcategory ${\mathcal T}_{dR}(X)$ of ${\mathcal C}_{dR}(X)$ gives
a Tannakian full subcategory of ${\mathcal C}_{Dol}(X)$. It is natural to ask the following:

\begin{question}\label{q1}
What is a direct description of the Tannakian full subcategory of ${\mathcal C}_{Dol}(X)$
corresponding to the subcategory ${\mathcal T}_{dR}(X)$ of ${\mathcal C}_{dR}(X)$?
\end{question}

Answering Question \ref{q1} amounts to classifying all Higgs bundles $(E,\, \theta)$ such that the holomorphic
vector bundle $V$ underlying the flat connection $(V,\, D)$ corresponding to $(E,\, \theta)$ is
trivial. The problem is that the holomorphic structure of $V$ depends on $(E,\, \theta)$ in a rather
intricate manner.

\subsection{Characters of $\Theta$: multiplicative and additive}\label{01.05.2020--2}

Let ${\mathbb G}_m$ be the multiplicative group of nonzero scalars and
${\mathbb G}_a$ the additive group of scalars.
Let $G$ be an affine group scheme over $\mathbb C$. Two of the most basic abstract groups 
associated to it are its group of characters $\mathrm{Hom}(G,\,{\mathbb G}_m)$ and its group 
of additive characters $\mathrm{Hom}(G,\,{\mathbb G}_a)$ (notations are those of \cite[p.~5, 
1.2]{waterhouse79}). These are denoted respectively by $\mathbb X(G)$ and $\mathbb X_a(G)$ in 
what follows. We note that $\mathbb X_a(G)$ comes with the extra structure of a $\mathbb 
C$-vector space. Clearly, $\mathbb X(G)$ is the group of isomorphism classes of rank one 
representations of $G$. In turn, the standard immersion of ${\mathbb G}_a$ into ${\rm GL}(2, 
{\mathbb C})$ permits us to view $\mathbb X_a(G)$ as the vector space $$\mathrm{Ext}_G( 
{\mathbf 1},\, {\mathbf 1})$$ parametrizing extensions of the rank one trivial representation by itself. 

We shall record some observations on characters and additive characters of $\varpi(X,\,x_0)$ 
and $\Theta(X,\,x_0)$. In what follows,
\[
\mathrm{Pic}^\tau(X)
\]
is the subgroup of $\mathrm{Pic}(X)$ formed by the classes of
holomorphic line bundles on $X$ with vanishing first Chern class. 
Recall that according to our convention, Chern classes are real. The subgroup $\mathrm{Pic}^0(X)$ of 
classes having vanishing {\it integral} first Chern class is of finite index in ${\rm Pic}^\tau(X)$ as we 
learn from the theorem of the base.

\begin{lemma}\label{05.05.2020--1}
The map 
\[
H^0(X,\,\Omega_X^1)\,\longrightarrow\, {\mathbb X}(\Theta(X,\,x_0))
\]
sending a form $\alpha$ to the isomorphism class $({\mathcal O_X},\,\mathrm d+\alpha)$ is an isomorphism. The
group $\mathbb X(\varpi(X,\, x_0))$ sits in a short exact sequence 
\[
1\,\longrightarrow\, {\mathbb X}(\Theta(X,\,x_0))\,\longrightarrow\,{\mathbb X}(\varpi(X,\,x_0))
\,\longrightarrow\, \mathrm{Pic}^\tau(X)\,\longrightarrow\,1\, .
\]
\end{lemma}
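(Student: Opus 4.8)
The plan is to identify rank-one objects in the two Tannakian categories explicitly and then relate them by sending a flat line bundle to its underlying holomorphic line bundle. First, recall that $\mathbb X(\Theta(X,x_0))$ is the group of isomorphism classes of rank-one objects of $\mathcal T_{dR}(X)$, i.e. pairs $(\mathcal O_X,\mathrm d+\alpha)$ with $\alpha\in H^0(X,\Omega^1_X)$ (a holomorphic connection on a line bundle is automatically integrable, and on $\mathcal O_X$ it is $\mathrm d$ plus a global holomorphic $1$-form), the group law being tensor product, which corresponds to addition of forms. So I would first check that $\alpha\mapsto(\mathcal O_X,\mathrm d+\alpha)$ is a group homomorphism $H^0(X,\Omega^1_X)\to\mathbb X(\Theta(X,x_0))$. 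Surjectivity is immediate from the description of rank-one objects. Injectivity: if $(\mathcal O_X,\mathrm d+\alpha)\cong(\mathcal O_X,\mathrm d)$, a trivializing horizontal section is a nowhere-vanishing $f\in H^0(X,\mathcal O_X)=\mathbb C^*$ (using compactness and connectedness) with $\mathrm df+\alpha f=0$, forcing $\alpha=-\mathrm df/f=0$. This gives the first assertion.

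Next, $\mathbb X(\varpi(X,x_0))$ is the group of rank-one objects of $\mathcal C_{dR}(X)$, i.e. pairs $(L,D)$ with $L$ a holomorphic line bundle (necessarily with $c_1(L)=0$ since $L$ admits a holomorphic connection, by the Atiyah–Griffiths–Harris result already quoted) and $D$ a holomorphic connection. I would define the map $\mathbb X(\varpi(X,x_0))\to\mathrm{Pic}^\tau(X)$ by $(L,D)\mapsto L$; it is a group homomorphism because the tensor product of connections sits on the tensor product of line bundles, and its image lands in $\mathrm{Pic}^\tau(X)$ by the Chern class remark. The kernel is exactly the rank-one objects whose underlying bundle is $\mathcal O_X$, i.e. the image of $\mathbb X(\Theta(X,x_0))$ under the quotient map $\mathbf q_X^*$ dual to \eqref{e6}; and this inclusion $\mathbb X(\Theta(X,x_0))\hookrightarrow\mathbb X(\varpi(X,x_0))$ is injective because $\mathbf q_X$ is a quotient homomorphism (equivalently, because restricting a rank-one object of $\mathcal C_{dR}(X)$ with trivial bundle already gives the same isomorphism class of object, and two nonisomorphic pairs $(\mathcal O_X,\mathrm d+\alpha)$ stay nonisomorphic). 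So exactness is established except at $\mathrm{Pic}^\tau(X)$, i.e. surjectivity of $(L,D)\mapsto L$.

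Surjectivity is the one genuine input and the main obstacle: given $L\in\mathrm{Pic}^\tau(X)$, I must produce a holomorphic connection on $L$. The obstruction to a holomorphic connection on a line bundle $L$ is its Atiyah class in $H^1(X,\Omega^1_X)$, and under the Hodge-theoretic identification this obstruction is (a multiple of) the image of $c_1(L)$ under $H^1(X,\mathcal O_X)\oplus H^0(X,\Omega^1_X)\supset$... — more precisely, for $L$ with $c_1(L)=0$ in $H^2_{dR}(X,\mathbb R)$, the Atiyah class is the $(1,1)$-component of $c_1$, which vanishes; alternatively one invokes the classical fact (Atiyah \cite{At}, Griffiths–Harris \cite{griffiths-harris78}) that on a compact Kähler manifold a holomorphic line bundle admits a holomorphic connection \emph{iff} its first Chern class vanishes in $H^2_{dR}(X,\mathbb R)$. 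Since that equivalence is precisely the reference already cited for the "$c_1=0$" direction, I would use its converse here. This completes the short exact sequence. Finally I would note that the splitting/compatibility statement — that the subgroup $\mathbb X(\Theta(X,x_0))$ really is the kernel and not merely a subgroup — follows from the universal property characterizing $\Theta(X,x_0)$ as the quotient of $\varpi(X,x_0)$ by the normal subgroup scheme generated by the objects forcing triviality, so a rank-one $\varpi$-representation factors through $\Theta$ exactly when its underlying holomorphic line bundle is trivial.
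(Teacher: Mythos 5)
Your proof is correct and follows essentially the same route as the paper: verify the first claim directly, send $(L,D)$ to $L$ landing in $\mathrm{Pic}^\tau(X)$ via the curvature interpretation of $c_1$, identify the kernel with the rank-one objects of $\mathcal T_{dR}(X)$, and then establish surjectivity by producing a connection on an arbitrary $L$ with vanishing real first Chern class. The one point of divergence is that surjectivity step: the paper quotes L\"ubke--Teleman \cite{LT}, which directly supplies a \emph{flat} unitary connection on such an $L$, whereas you argue through the Atiyah class. Your route is fine, but it only hands you a holomorphic connection, while objects of $\mathcal C_{dR}(X)$ are required to be \emph{integrable}; you should add the one remaining line: the curvature of a holomorphic connection on a line bundle is a global holomorphic $2$-form whose de Rham class is a multiple of $c_1(L)=0$, and on a compact K\"ahler manifold a holomorphic form that is exact is zero (holomorphic forms are harmonic), so the connection is automatically flat. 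Equivalently, once one integrable connection exists, every holomorphic connection on $L$ differs from it by a closed holomorphic $1$-form and is therefore integrable. With that observation your argument and the paper's are interchangeable; the paper's citation simply packages the existence and flatness in one step.
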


\begin{proof}The first claim is very simple and its verification is omitted. Then, if $(L,\,\nabla)$ is an 
integrable connection with $L$ a holomorphic line bundle, the interpretation of $c_1(L)$ as curvature form 
\cite[p.~141, Proposition]{griffiths-harris78} assures that $L\,\in\,\mathrm{Pic}^\tau(X)$, and we obtain the arrow 
$$\mathbb X(\varpi(X,\,x_0))\,\longrightarrow\,\mathrm{Pic}^\tau(X)\,.$$ The kernel is precisely the group of 
(isomorphism classes of) objects in $\mathcal T_{dR}(X)$ of rank one, which is $\mathbb X(\Theta(X,\,x_0))$. That 
any class in $\mathrm{Pic}^\tau(X)$ carries an integrable connection is explained by \cite[p.~40,
Corollary 1.3.12]{LT}.
\end{proof}

We shall now be concerned with the additive characters or, with the vector spaces 
\[
{\mathbb X}_a(\varpi(X,\,x_0)) \,\simeq\, \mathrm{Ext}_{\mathcal C_{dR}(X)}({\mathbf 1},\,{\mathbf 1})
\]
and 
\[
{\mathbb X}_a(\Theta(X,\,x_0)) \,\simeq\, \mathrm{Ext}_{\mathcal T_{dR}(X)}({\mathbf 1},\,{\mathbf 1})\, ,
\]
where ${\mathbf 1}$ is the identity object of the pertinent categories. 

Let $H^1_{dR}(X,\,\mathbb C)$ be $\mathbb H^1(X,\, \Omega_X^\bullet)$ \cite[p.~446]{griffiths-harris78}, which
is of course canonically isomorphic to $H^1(X,\,\mathbb C)$ \cite[p.~448]{griffiths-harris78}, and let 
\begin{equation}\label{la}
\lambda_X\,:\, H^0(X,\,\Omega_X^1)\,\longrightarrow\, H^1_{dR}(X,\,\mathbb C)
\end{equation}
be the natural arrow obtained from the fact that global holomorphic one forms on $X$ are closed. Here
we use that $X$ is K\"ahler.

\begin{lemma}\label{01.05.2020--1}
There are functorial isomorphisms of vector spaces
\[
\mathrm{Ext}_{\mathcal C_{dR}(X)}({\mathbf 1},\,{\mathbf 1})\,
\stackrel{\sim}{\longrightarrow}\,H^1_{dR}(X,\, {\mathbb C})
\]
and 
\[
\mathrm{Ext}_{\mathcal T_{dR}(X)}({\mathbf 1},\,{\mathbf 1})\,
\stackrel{\sim}{\longrightarrow}\, H^0(X,\,\Omega_X^1)\, .
\]
Under these isomorphisms, the canonic arrow 
\[
\mathrm{Ext}_{\mathcal T_{dR}(X)}({\mathbf 1},\,{\mathbf 1}) \,\longrightarrow\, 
\mathrm{Ext}_{\mathcal C_{dR}(X)}({\mathbf 1},\, {\mathbf 1})
\]
corresponds to the homomorphism $\lambda_X$ in \eqref{la}.
\end{lemma}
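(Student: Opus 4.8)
The plan is to identify, in each category, the vector space of isomorphism classes of self-extensions of the unit object with an explicit cohomological model, and then to check compatibility with the natural functor $\mathcal T_{dR}(X)\hookrightarrow\mathcal C_{dR}(X)$.

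First I would treat $\mathrm{Ext}_{\mathcal C_{dR}(X)}(\mathbf 1,\mathbf 1)$. An extension of $(\mathcal O_X,\mathrm d)$ by itself is an object $(E,D)$ with $E$ sitting in $0\to\mathcal O_X\to E\to\mathcal O_X\to 0$; since every holomorphic bundle admitting a holomorphic connection has vanishing Atiyah class in the relevant sense (and here the extension is of $\mathcal O_X$ by $\mathcal O_X$), the underlying bundle is necessarily $\mathcal O_X\oplus\mathcal O_X$, so $D=\mathrm d+\left(\begin{smallmatrix}0&\alpha\\0&0\end{smallmatrix}\right)$ for some $\alpha\in H^0(X,\Omega^1_X)$ subject to integrability, which forces $\mathrm d\alpha=0$ — automatic here since $X$ is K\"ahler. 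Two such data give isomorphic extensions precisely when the corresponding $\alpha$'s differ by $\mathrm d f$ for a global holomorphic function $f$, i.e.\ by an exact form; but a global holomorphic exact form on a compact K\"ahler manifold is zero, hence on the $\mathcal T_{dR}$-side the class of the extension is exactly $\alpha\in H^0(X,\Omega^1_X)$. On the $\mathcal C_{dR}$-side, however, isomorphisms of the extension are allowed to change the holomorphic frame of $E$ in a way compatible with the connection, and a careful bookkeeping of these identifications shows the invariant one retains is the class of $\alpha$ in $H^1_{dR}(X,\mathbb C)$ — concretely, one uses the $\check{\mathrm C}$ech–de Rham (or the hypercohomology $\mathbb H^1(X,\Omega^\bullet_X)$) description, where a connection matrix entry together with its allowed modifications by $\mathrm d(\text{holomorphic})$ and by $(\text{smooth function})$-changes of splitting recovers precisely $H^1_{dR}$. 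This gives the first isomorphism, and functoriality in $X$ is immediate from the description via pullback of forms and pullback in hypercohomology.

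Next, the map $\mathrm{Ext}_{\mathcal T_{dR}(X)}(\mathbf 1,\mathbf 1)\to\mathrm{Ext}_{\mathcal C_{dR}(X)}(\mathbf 1,\mathbf 1)$ induced by the inclusion functor sends the class of the extension $(\mathcal O_X\oplus\mathcal O_X,\mathrm d+\left(\begin{smallmatrix}0&\alpha\\0&0\end{smallmatrix}\right))$, viewed in $H^0(X,\Omega^1_X)$, to the same object viewed in $\mathcal C_{dR}(X)$, whose class is $[\alpha]\in H^1_{dR}(X,\mathbb C)$. By construction of $\lambda_X$ in \eqref{la} — which is precisely ``take a closed holomorphic one-form to its de Rham class'' — this is exactly the statement that the square commutes. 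The Dolbeault-type decomposition $H^1_{dR}(X,\mathbb C)\cong H^0(X,\Omega^1_X)\oplus\overline{H^0(X,\Omega^1_X)}$ (Hodge theory on the compact K\"ahler manifold $X$) shows incidentally that $\lambda_X$ is injective, consistent with the expected surjectivity of $\mathbb X_a(\varpi)\to\mathbb X_a(\Theta)$ being false in general and the map going the other way.

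The main obstacle is the first isomorphism on the $\mathcal C_{dR}$-side: unwinding which automorphisms of $E=\mathcal O_X^{\oplus 2}$ are permitted when one does \emph{not} require the splitting to be holomorphic but only to intertwine the connections up to the extension data, and seeing that the resulting equivalence classes are counted by $H^1_{dR}(X,\mathbb C)$ rather than by $H^1(X,\mathcal O_X)$ or by $H^0(X,\Omega^1_X)$. The cleanest route is to phrase $\mathrm{Ext}^1$ in $\mathcal C_{dR}(X)$ as the first cohomology of the de Rham complex of the flat bundle $\mathrm{End}(\mathbf 1)=\mathcal O_X$ with its trivial connection — i.e.\ $H^1$ of $(\Omega^\bullet_X,\mathrm d)$, which is $H^1_{dR}(X,\mathbb C)$ — and to note that the corresponding computation in $\mathcal T_{dR}(X)$ truncates this complex because only the degree-one holomorphic forms (modulo $\mathrm d$ of global holomorphic functions, which vanish) contribute while the ``$\bar\partial$-directions'' are no longer available, leaving $H^0(X,\Omega^1_X)$. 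Once this comparison of complexes is set up, both isomorphisms and the compatibility with $\lambda_X$ drop out simultaneously, and functoriality is built in.
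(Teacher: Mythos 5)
Your opening claim --- that any self-extension $(E,D)$ of $(\mathcal O_X,\mathrm d)$ in $\mathcal C_{dR}(X)$ necessarily has underlying bundle $\mathcal O_X^{\oplus 2}$ --- is false, and it contradicts the very statement you are proving. Vanishing of the Atiyah class says that $E$ \emph{admits} a holomorphic connection; it says nothing about the class of the extension $0\to\mathcal O_X\to E\to\mathcal O_X\to 0$ in $H^1(X,\mathcal O_X)$, which is in general nonzero (e.g.\ Atiyah's indecomposable rank-two bundle on an elliptic curve, or any nonsplit extension of $\mathcal O_X$ by $\mathcal O_X$ on a curve of positive genus: such a bundle is indecomposable of degree zero and hence carries holomorphic connections by Atiyah--Weil). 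If your claim were true, every self-extension of $\mathbf 1$ in $\mathcal C_{dR}(X)$ would already lie in $\mathcal T_{dR}(X)$ and the two $\mathrm{Ext}$ groups would coincide, whereas $\dim H^1_{dR}(X,\mathbb C)=2\,\dim H^0(X,\Omega^1_X)$ on a compact K\"ahler manifold. The next step compounds the error: since $\mathcal T_{dR}(X)$ is a \emph{full} subcategory, the isomorphisms available in $\mathcal C_{dR}(X)$ between two objects of $\mathcal T_{dR}(X)$ are exactly the same, so the map $\mathrm{Ext}_{\mathcal T_{dR}}(\mathbf 1,\mathbf 1)\to\mathrm{Ext}_{\mathcal C_{dR}}(\mathbf 1,\mathbf 1)$ is an inclusion of a smaller space into a larger one because $\mathcal C_{dR}$ has \emph{more extensions} (those with nontrivial underlying bundle), not more identifications; one cannot obtain the larger space $H^1_{dR}(X,\mathbb C)$ as a quotient of $H^0(X,\Omega^1_X)$, and ``smooth changes of splitting'' are not morphisms in a holomorphic category.

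Your final paragraph does contain the correct idea, and it is essentially the paper's: realize $\mathrm{Ext}_{\mathcal C_{dR}(X)}(\mathbf 1,\mathbf 1)$ as $\mathbb H^1(X,\Omega_X^\bullet)=H^1_{dR}(X,\mathbb C)$. Concretely, choose local sections $e_i$ of $E$ lifting $1$; then $\nabla e_i=1\otimes\theta_i$ and $e_i-e_j=a_{ij}\cdot 1$ produce a cocycle $(\theta_i,\,a_{ij})$ of the total \v{C}ech--de Rham complex, with $\mathrm d\theta_i=0$ and $\mathrm d a_{ij}=\theta_i-\theta_j$. The point you are missing is that the component $(a_{ij})\in C^1(\mathfrak U,\mathcal O_X)$ records precisely the extension class of the underlying holomorphic bundle, and the hypercohomology class can be represented by a global holomorphic one-form (i.e.\ lies in $H^0(X,\Omega_X^1)\subset H^1_{dR}(X,\mathbb C)$) exactly when $(a_{ij})$ is a coboundary, i.e.\ exactly when $E$ is holomorphically trivial. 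That is the mechanism identifying $\mathrm{Ext}_{\mathcal T_{dR}(X)}(\mathbf 1,\mathbf 1)$ with $H^0(X,\Omega_X^1)$ and showing that the comparison map is $\lambda_X$; the assertion that the $\mathcal T_{dR}$ computation ``truncates the $\bar\partial$-directions'' is not a substitute for this argument and, as written, your proof does not establish either isomorphism.
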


\begin{proof} 
Let $(E,\,\nabla)\,\in\,{\mathcal C}_{dR}(X)$ be an extension of $(\mathcal O_X,\,\mathrm d)$ by itself. Let
${\mathfrak U}\,=\,\{U_i\}$ be an open covering of $X$ on which we have $e_i\,\in\, H^0(U_i,\,E)$ mapping to
$1\,\in\,\mathcal O_X(U_i)$. Consider
the holomorphic $1$--form $\theta_i$ on $U_i$ such that $\nabla(e_i)\,=\,1\otimes \theta_i$.
Then, on $U_i\cap U_j$, 
we have $e_i\,=\,a_{ij}\cdot1+e_j$ with $a_{ij}$ a holomorphic function. The element
$$(\theta_i,\,a_{ij})\,\in\, C^0(\mathfrak U,\,\Omega_X^1)\oplus C^1(\mathfrak U,\,\mathcal O_X)$$
defines a $1$-cocycle of the total \v{C}ech complex 
associated to the bicomplex $$(C^p(\mathfrak U,\,\Omega^q_X))_{0\le p,q}:$$
the form $\theta_i$ is closed and ${\rm 
d}a_{ij}\,=\,\theta_i-\theta_j$. Hence, $\{(\theta_i,\,a_{ij})\}$ gives an element of $H^1_{dR}(X,\,\mathbb C)$, call it 
$[E,\,\nabla]$. Note that a 1-cocycle with coefficients in $\mathrm{GL}_2$ representing the vector bundle $E$ is 
given by the matrices $\begin{pmatrix}1&-a_{ij}\\0&1\end{pmatrix}$ so that $E$ is isomorphic to ${\mathcal 
O}_X^2$ if and only if there exists, after eventually passing to a finer covering, a 0--cochain $(h_i)$ with 
$$h_j-h_i\,=\,-a_{ij}\, .$$ In this case, $[E,\,\nabla]$ belongs to the subspace $H^0(X,\,\Omega_X^1)\,\subset\, 
H^1_{dR}(X,\,\mathbb C)$.

It is a lengthy but straightforward verification to show that 
\[
\mathrm{Ext}_{\mathcal C_{dR}(X)}({\mathbf 1},\,{\mathbf 1})\,\longrightarrow\,
H_{dR}^1(X,\,\mathbb C)\, ,\ \ (E,\,\nabla)\,\longmapsto\,[E,\,\nabla]
\]
is actually bijective.
\end{proof}

\subsection{Properties of the pro-algebraic group scheme $\Theta$}

\begin{lemma}\label{lem2}
Let $X$ and $Y$ be compact connected K\"ahler manifolds and $V$ a holomorphic vector bundle on $X\times Y$
of rank $r$ such that
\begin{itemize}
\item the restriction of $V$ to $\{x\}\times Y$ is holomorphically isomorphic to
${\mathcal O}^{\oplus r}_Y$ for every $x\, \in\, X$, and 

\item the restriction of $V$ to $X\times\{y\}$ is holomorphically isomorphic to
${\mathcal O}^{\oplus r}_X$ for every $y\, \in\, Y$.
\end{itemize}
Then $V$ is holomorphically isomorphic to ${\mathcal O}^{\oplus r}_{X\times Y}$.
\end{lemma}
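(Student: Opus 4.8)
The plan is to use a seesaw-type argument adapted to the holomorphic (analytic) setting, combined with the triviality of the relevant Picard groups along the fibers. First I would consider the determinant line bundle $\det V$ on $X \times Y$. Its restriction to each $\{x\} \times Y$ is $\det(\mathcal O_Y^{\oplus r}) = \mathcal O_Y$, and likewise its restriction to each $X \times \{y\}$ is trivial; so $\det V$ is a line bundle that is trivial on each horizontal and each vertical fiber. By the seesaw principle (in the form valid for compact complex manifolds: a line bundle on $X \times Y$ trivial on all fibers $\{x\} \times Y$ is pulled back from $X$, and if it is moreover trivial on one $X \times \{y_0\}$ it is globally trivial) one concludes $\det V \cong \mathcal O_{X \times Y}$. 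This reduces the first Chern class obstruction to zero.

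Next I would exploit the hypothesis more strongly to trivialize $V$ itself and not merely its determinant. Fix a base point $y_0 \in Y$ and consider the restriction $V|_{X \times \{y_0\}} \cong \mathcal O_X^{\oplus r}$; choose a holomorphic trivialization, i.e. $r$ holomorphic sections $s_1, \dots, s_r$ of $V|_{X \times \{y_0\}}$ forming a frame. The key step is to spread these sections out over $X \times Y$. For this I would study the coherent sheaf $p_{X*}V$, where $p_X : X \times Y \to X$ is the projection: the hypothesis that $V|_{\{x\}\times Y} \cong \mathcal O_Y^{\oplus r}$ for every $x$ gives $h^0(\{x\}\times Y, V|_{\{x\}\times Y}) = r$ constant in $x$ (and $h^1$ may not be constant, but $h^0$ being constant and the Euler characteristic considerations via Grauert's theorem / semicontinuity suffice). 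Grauert's theorem then shows $p_{X*}V$ is locally free of rank $r$ on $X$ and that the base change map $(p_{X*}V)|_x \to H^0(\{x\}\times Y, V|_{\{x\}\times Y})$ is an isomorphism for every $x \in X$. Thus $p_{X*}V$ is a rank $r$ holomorphic vector bundle on $X$ whose fiber at $x$ is canonically the space of global sections along the corresponding vertical slice.

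Now I would identify $p_{X*}V$ with a trivial bundle. Restricting to $X \times \{y_0\}$ and using the frame $s_1, \dots, s_r$: each $s_k$ extends, slice by slice, to a section of $V$ along every $\{x\} \times Y$, and by the base-change isomorphism these assemble into $r$ holomorphic sections $\sigma_1, \dots, \sigma_r$ of $p_{X*}V$ over $X$; equivalently, $r$ global holomorphic sections of $V$ on $X \times Y$ whose restrictions to $\{x\} \times Y$ form a basis of $H^0(\{x\}\times Y, \mathcal O_Y^{\oplus r})$ for each $x$. Concretely, one checks that the corresponding map of sheaves $\mathcal O_{X\times Y}^{\oplus r} \to V$, $e_k \mapsto \sigma_k$, is an isomorphism on every fiber $\{x\} \times Y$ (it is there the evaluation of a frame against a frame, an isomorphism of trivial bundles because it is so at $y_0$ and "frame stays a frame" on a connected compact $Y$ with trivial $V|_{\{x\}\times Y}$ — here one uses that an endomorphism of $\mathcal O_Y^{\oplus r}$ is a constant matrix and invertibility is detected at a single point). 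A sheaf morphism between vector bundles of the same rank that is a fiberwise isomorphism on each vertical slice — hence everywhere — is an isomorphism of bundles; therefore $V \cong \mathcal O_{X \times Y}^{\oplus r}$.

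The main obstacle I expect is the cohomology-and-base-change input: one must ensure $p_{X*}V$ behaves well, i.e. that $h^0$ along vertical fibers is constant and that Grauert's base change theorem applies to give a locally free direct image with the correct fibers. In the projective algebraic setting this is routine, but since $X$ and $Y$ are only compact Kähler one should invoke Grauert's direct image and base change theorems for proper holomorphic maps between complex spaces (e.g. as in Banica–Stanasila or Grauert–Remmert), where the statements are equally available. A secondary subtlety is the correct analytic form of the seesaw principle used for $\det V$; alternatively one can bypass the determinant step entirely, as the direct-image argument above already produces a global trivialization directly. I would present the direct-image argument as the main line and mention the seesaw observation only as motivation.
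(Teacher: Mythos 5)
Your proposal is correct and follows essentially the same route as the paper: identify $p_{X*}V$ with $V|_{X\times\{y_0\}}$ via the evaluation-at-$y_0$ isomorphism on each vertical slice, then lift the resulting frame to $r$ global sections of $V$ that trivialize it fiberwise and hence globally. Your extra care about Grauert base change and the fiberwise-frame check only makes explicit what the paper's proof uses implicitly, and the seesaw preliminary is, as you note, dispensable.
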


\begin{proof}
Fix a point $y_0\, \in\, Y$. For any point $x\, \in\, X$, the restriction $V_x$ of $V$ to
$\{x\}\times Y$ trivializable. Hence the evaluation to $y_0$ of sections of $V_x$ 
$$
H^0(Y, \, V_x) \, \longrightarrow\, V_{x,y_0}\, ,\ \ \sigma\, \longmapsto\, \sigma(y_0)
$$
is an isomorphism. This implies that for the natural projection $$p^X\, :\, X\times Y\,
\longrightarrow\, X\, ,$$ the direct image of $V$ to $X$ is
\begin{equation}\label{e1}
p^X_* V\,=\, V^{y_0}\, :=\, V\vert_{X\times\{y_0\}}\, .
\end{equation}
The vector bundle $V^{y_0}$ is supposed to be holomorphically trivial. Fixing $r$ linearly
independent holomorphic sections $\{s_1,\, \cdots, \, s_r\}$ of $V^{y_0}$, we get a
holomorphic trivialization of $V^{y_0}$. Using the isomorphism in \eqref{e1}, each
$s_i$ produces a section
$$
\widetilde{s}_i\, \in\, H^0(X, \, p^X_* V)\,=\, H^0(X\times Y,\, V)\, .
$$
Now these holomorphic sections $\{\widetilde{s}_1,\, \cdots, \, \widetilde{s}_r\}$
of $V$ trivialize $V$ holomorphically.
\end{proof}

\begin{proposition}The group scheme $\Theta(X,\,x_0)$ is the projective limit of
connected algebraic groups.
\end{proposition}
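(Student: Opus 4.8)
The plan is to use the standard criterion characterizing when a pro-algebraic affine group scheme is a projective limit of connected algebraic groups: namely, $\Theta(X,\,x_0)$ is pro-connected if and only if its Tannakian category $\mathcal T_{dR}(X)$ contains no nontrivial object $(E,\,D)$ which is, as a representation, trivialized by a \emph{finite} quotient group — equivalently, the only objects on which some finite-index (hence open) normal subgroup scheme acts trivially, through a finite quotient, are trivial. Concretely, since every affine group scheme is the projective limit of its algebraic quotients, and each such quotient $G$ has identity component $G^\circ$ with $G/G^\circ$ finite, it suffices to show that every morphism from $\Theta(X,\,x_0)$ to a \emph{finite} group is trivial. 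By Tannakian duality this amounts to: every object $(E,\,D)$ of $\mathcal T_{dR}(X)$ whose associated monodromy representation has finite image is a direct sum of copies of the identity object $(\mathcal O_X,\,\mathrm d)$.

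So the first step is to take $(E,\,D)=(\mathcal O_X^{\oplus r},\,D)$ in $\mathcal T_{dR}(X)$ with finite monodromy group $\Gamma$, and pass to the finite étale cover $\pi\,:\,\widetilde X\,\longrightarrow\, X$ trivializing $\Gamma$; here one must note $\widetilde X$ is again a compact connected Kähler manifold. On $\widetilde X$ the pulled-back connection $(\pi^*E,\,\pi^*D)$ has trivial monodromy, hence $(\pi^*E,\,\pi^*D)\,\cong\,(\mathcal O_{\widetilde X}^{\oplus r},\,\mathrm d)$ by the Riemann–Hilbert correspondence; in particular $\pi^*E$ is the trivial bundle, and the flat sections span. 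The second step is to transfer this back down: since $E$ is already holomorphically trivial on $X$, fix a holomorphic trivialization $E\,\cong\,\mathcal O_X^{\oplus r}$; then $D=\mathrm d+\Omega$ for a holomorphic matrix-valued one-form $\Omega\,\in\, H^0(X,\,\mathrm{End}(\mathbb C^r)\otimes\Omega^1_X)$ with $\mathrm d\Omega+\Omega\wedge\Omega=0$ (flatness). The claim to establish is that $\Omega$ can be conjugated, by a \emph{constant} invertible matrix, into a form whose monodromy is trivial — i.e. that $(E,\,D)$ is already $\cong(\mathcal O_X^{\oplus r},\,\mathrm d)$ on the nose. The key point making this work is that a holomorphic flat connection on the trivial bundle with finite monodromy must in fact have monodromy the identity: the pullback $\pi^*\Omega$ is gauge-equivalent via a \emph{holomorphic} gauge transformation $g\,:\,\widetilde X\,\longrightarrow\,\mathrm{GL}(r,\mathbb C)$ to $0$, and since $\widetilde X$ is compact connected, $g$ is constant, so $\pi^*\Omega=0$, whence $\Omega=0$ as $\pi$ is a submersion (surjective on tangent spaces). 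Therefore $D=\mathrm d$ and the representation was trivial to begin with.

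An alternative and perhaps cleaner route, avoiding the choice of cover, is to argue directly with the group scheme: by Lemma \ref{05.05.2020--1} and Lemma \ref{01.05.2020--1} both $\mathbb X(\Theta(X,\,x_0))\,\cong\, H^0(X,\,\Omega^1_X)$ and $\mathbb X_a(\Theta(X,\,x_0))\,\cong\, H^0(X,\,\Omega^1_X)$ are torsion-free (indeed $\mathbb C$-vector spaces), which already forbids nontrivial characters to finite groups through one-dimensional objects; combined with the observation that the abelianization of the monodromy of any object with finite image injects into a sum of such characters, one again reduces to showing the monodromy is unipotent, and a unipotent finite subgroup of $\mathrm{GL}(r,\mathbb C)$ is trivial. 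In either presentation, I expect the main obstacle to be the clean reduction step asserting that a finite-monodromy holomorphic integrable connection on a \emph{holomorphically trivial} bundle over a compact Kähler manifold is actually the trivial connection: it hinges on the rigidity fact that a holomorphic map from a compact connected complex manifold to $\mathrm{GL}(r,\mathbb C)$ is constant, applied to the gauge transformation trivializing the pullback to a finite cover, and one must make sure the descent of this statement to $X$ is handled correctly.
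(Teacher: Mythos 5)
Your main argument is correct, but it takes a genuinely different route from the paper. Both proofs make the same initial reduction: since $\Theta(X,x_0)$ is the limit of its algebraic quotients, pro-connectedness amounts to showing every quotient morphism onto a finite group $G$ is trivial. From there the paper works with the \emph{regular representation} of $G$: the corresponding object $(\mathcal R,\nabla)$ of $\mathcal T_{dR}(X)$ is a trivial bundle carrying a horizontal $\mathcal O_X$--algebra structure, whose analytic spectrum is a finite \'etale cover $Y\to X$; horizontality of idempotents forces $Y$ to be connected, and then the count $\dim H^0(Y,\mathcal O_Y)=\dim H^0(X,\mathcal R)=|G|$ (triviality of $\mathcal R$ plus compactness of $X$) forces $|G|=1$. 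You instead take an arbitrary object $(\mathcal O_X^{\oplus r},\mathrm d+\Omega)$ with finite monodromy, pass to the connected finite cover $\pi:\widetilde X\to X$ killing the monodromy, observe that the pullback trivialization and the flat trivialization of $\pi^*E$ differ by a holomorphic map $g:\widetilde X\to\mathrm{GL}(r,\mathbb C)$, which is constant by compactness of $\widetilde X$, whence $\pi^*\Omega=0$ and so $\Omega=0$. Both arguments ultimately rest on the same rigidity (holomorphic functions on a compact connected manifold are constant, applied to a holomorphically trivial bundle), but yours is more elementary and self-contained --- it avoids the analytic spectrum, flat descent and the \'etale-ness verification --- while the paper's version is more categorical and adapts more readily to settings where one reasons only with the regular representation. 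One caveat: your ``alternative and cleaner route'' via characters and additive characters is not actually complete --- it only controls the abelianization of a finite quotient, and the asserted reduction to unipotent monodromy fails for a perfect finite group --- so you should rely on the covering-space argument, which does the job.
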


\begin{proof}
Consider an algebraic quotient $\Theta(X,\,x_0)\, \longrightarrow\,
\Theta'$. We need to show that $\Theta'$ is connected and
this amounts to showing that if $\Theta'\, \longrightarrow\, G$ is an algebraic quotient morphism
to a finite group $G$, then $G$ is the trivial group. Now, write $R$ for the
left regular representation of $G$ and note that the multiplication
operation $R\otimes R\, \longrightarrow\,
R$ and the identity $\mathbb C\, \longrightarrow\, R$ are $G$--equivariant.

Since $G$ is a quotient of $\Theta(X,\,x_0)$, any $G$--module gives a $\Theta(X,\,x_0)$--module.
In particular, $R$ gives a $\Theta(X,\,x_0)$--module. Let $({\mathcal R},\, \nabla)$ be the
object in the category ${\mathcal T}_{dR}(X)$ corresponding to this $\Theta(X,\,x_0)$--module $R$.
We note that ${\mathcal R}$ is the trivial holomorphic vector
${\mathcal O}_X\otimes_{\mathbb C}R$ on $X$ with fiber $R$. In particular, the rank of ${\mathcal R}$ is $|G|$.
The above homomorphism $\mathbb C\, \longrightarrow\, R$ produces an arrow
${\mathcal O}_X \, \longrightarrow\, {\mathcal R}$. The above mentioned multiplication
operation $R\otimes R\, \longrightarrow\,
R$ produces another arrow ${\mathcal R}\otimes
{\mathcal R}\, \longrightarrow\, {\mathcal R}$.
Endowed with these, $\mathcal R$ becomes a locally free ${\mathcal
O}_X$--algebra. Note, in addition, that giving ${\mathcal R}\otimes{\mathcal R}$ the
tensor product connection, the arrow ${\mathcal R}\otimes {\mathcal R}
\, \longrightarrow\, {\mathcal R}$ is also horizontal.

Let $f\,:\,Y\, \longrightarrow\, X$ be the analytic spectrum of $\mathcal R$ (see
\cite[Expos\'e 19, Definition 2]{seminaire_cartan} and \cite[1.14-15]{fischer}); the morphism $f$ is
finite and $f_*\mathcal
O_Y\,=\,\mathcal R$. In addition, it is not hard to see that
$f$ is a {\it local biholomorphism} so that $Y$ is a compact complex {\it manifold}; here are the details.
Let $x\,\in\, X$ be arbitrary. We have a $G$--equivariant isomorphism of $R$-algebras
$$R\otimes R\, \longrightarrow\,R^{\oplus|G|}$$ which allows us to see that the ${\mathcal
R}_{x}$--algebra ${\mathcal R}_x\otimes_{\mathcal O_{X,x}}{\mathcal R}_x$
(multiplication on the left) is isomorphic to ${\mathcal R}_x^{\oplus |G|}$.
Consequently, ${\mathcal O}_{X,x}\, \longrightarrow\, \mathcal R_x$ is an \'etale morphism by flat
descent \cite[II, Proposition 4, p. 14]{raynaud}. 
Let now $y\in Y$ be above $x$. We know that $\mathcal O_{Y,y}$ is, as an $\mathcal O_{X,x}$--algebra, the localization of $\mathcal R_x$ at some maximal ideal \cite[Expos\'e 19, Proposition 6]{seminaire_cartan}. So, the natural arrow $\mathcal O_{X,x}\, \longrightarrow\,\mathcal O_{Y,y}$ is an isomorphism \cite[VII, Proposition 3, p. 76]{raynaud}. 

Finally, $Y$ is connected since any idempotent $$e\,\in\, H^0(X,\,\mathcal R)$$ is
horizontal so that it produces a horizontal arrow ${\mathcal O}_X\, \longrightarrow\, {\mathcal R}$
and then a $G$--invariant element of $R$.

Because
\[\begin{split}1&\,=\,\dim H^0(Y,\,\mathcal O_Y)\\
&\,=\,\dim H^0(X,\,\mathcal R)\\
&\,=\, |G|\cdot\dim H^0(X,\,\mathcal O_X),
\end{split}
\]
we conclude that $|G|\,=\,1$.
\end{proof}

\begin{proposition}\label{prop1}
Let $(X,\,x_0)$ and $(Y,\,y_0)$ be pointed compact K\"ahler manifolds. Set
$P\,=\,X\times Y$, and let $i\,:\,X\,\longrightarrow\, P$ (respectively, $j\,:\,Y
\,\longrightarrow\, P$) stand for the immersion defined by $x\, \longmapsto\, (x,\, y_0)$
(respectively, $y\, \longmapsto\, (x_0,\, y)$).
Then, the arrow of affine group schemes 
\[(i_\natural,\,j_\natural)\,:\,\Theta(X,\,x_0)\times\Theta(Y,\,y_0)\,\longrightarrow
\,\Theta(P,\, (x_0,\,y_0))
\]
is an isomorphism. 
\end{proposition}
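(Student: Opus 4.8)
The plan is to apply the standard Tannakian criterion for recognizing when a morphism of affine group schemes coming from a pair of tensor functors is an isomorphism, namely \cite[Proposition 2.21, p.139]{DMOS}. The morphism $(i_\natural,\,j_\natural)$ is induced by the pair of pullback functors $i^\#\,:\,\mathcal T_{dR}(P)\,\to\,\mathcal T_{dR}(X)$ and $j^\#\,:\,\mathcal T_{dR}(P)\,\to\,\mathcal T_{dR}(Y)$, or equivalently by the single functor $\mathcal T_{dR}(X)\times\mathcal T_{dR}(Y)\,\to\,\mathcal T_{dR}(P)$ sending $((E,\,D),\,(F,\,D'))$ to the external tensor product $(p_X^*E\otimes p_Y^*F,\, p_X^*D\otimes p_Y^*D')$. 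To conclude that $(i_\natural,\,j_\natural)$ is an isomorphism it suffices to verify three things: (a) every object of $\mathcal T_{dR}(P)$ is a subquotient of an object in the image of the external tensor product functor; (b) the functor is fully faithful on the identity-containing pieces in the appropriate sense, or rather that the induced functor on the product category is fully faithful; and (c) the essential image is closed under subobjects. In practice the cleanest route is: first show the external tensor product functor is fully faithful, then show its essential image generates $\mathcal T_{dR}(P)$ as a Tannakian category (i.e. every object is a subquotient of a direct sum of external tensor products), which by the cited criterion gives that $(i_\natural,\,j_\natural)$ is a closed immersion that is also faithfully flat, hence an isomorphism.

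The first step is to make sure the external tensor product functor actually lands in $\mathcal T_{dR}(P)$: if $E$ is trivial on $X$ and $F$ is trivial on $Y$ then $p_X^*E\otimes p_Y^*F$ is trivial on $P$, which is immediate. More importantly, I need to understand objects of $\mathcal T_{dR}(P)$ themselves. Here is where Lemma \ref{lem2} enters: an object $(V,\,\mathbb D)$ of $\mathcal T_{dR}(P)$ has $V$ trivial on $P$, so in particular its restriction to each slice $\{x\}\times Y$ and each slice $X\times\{y\}$ is trivial; thus Lemma \ref{lem2} is the tool that lets me recognize triviality of bundles built out of slice-wise data. Concretely, given $(V,\,\mathbb D)\,\in\,\mathcal T_{dR}(P)$, one restricts the flat connection to the slices $X\times\{y_0\}$ and $\{x_0\}\times Y$ to obtain objects of $\mathcal T_{dR}(X)$ and $\mathcal T_{dR}(Y)$; the content of the essential-surjectivity-up-to-subquotients step is that $(V,\,\mathbb D)$ embeds into (or is a subquotient of) the external tensor product of these two restrictions tensored with suitable auxiliary objects. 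The monodromy picture makes this transparent: via Riemann--Hilbert, $\pi_1(P,\,(x_0,\,y_0))\,=\,\pi_1(X,\,x_0)\times\pi_1(Y,\,y_0)$, a representation of the product group restricted to the two factors, and the original representation is a subrepresentation of the (external, i.e. outer) tensor product of the two restrictions — the standard fact that $\mathrm{Rep}(G_1\times G_2)$ is generated by outer tensor products of representations of $G_1$ and $G_2$. The only subtlety is keeping track of which bundles are holomorphically trivial, and that is exactly what Lemma \ref{lem2} and Proposition \ref{23.04.2020--1} control.

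Full faithfulness of the external tensor product functor, step two, reduces (because everything is a tensor category with duals, so $\mathrm{Hom}$ is read off from global horizontal sections of internal Homs, and internal Hom of external tensor products is again an external tensor product) to the statement
\[
H^0_{dR}\bigl(P,\, p_X^*(E_1^\vee\otimes E_2)\otimes p_Y^*(F_1^\vee\otimes F_2)\bigr)\,\cong\,H^0_{dR}(X,\,E_1^\vee\otimes E_2)\otimes H^0_{dR}(Y,\,F_1^\vee\otimes F_2),
\]
i.e. a Künneth-type formula for flat (horizontal) sections over a product, which again follows from $\pi_1(P)\,=\,\pi_1(X)\times\pi_1(Y)$ and the fact that invariants of a product group in an outer tensor product factor as the tensor product of invariants. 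Assembling: full faithfulness together with the generation statement shows, by \cite[Proposition 2.21, p.139]{DMOS}, that $(i_\natural,\,j_\natural)$ is faithfully flat and a closed immersion, hence an isomorphism. The main obstacle I anticipate is step one — carefully showing that an arbitrary object of $\mathcal T_{dR}(P)$ is a subquotient of external tensor products of objects that are genuinely in $\mathcal T_{dR}(X)$ and $\mathcal T_{dR}(Y)$ (as opposed to merely in $\mathcal C_{dR}$), which is where Lemma \ref{lem2} has to be deployed with care; the Künneth statement and full faithfulness are routine once one passes to the Betti side via Riemann--Hilbert.
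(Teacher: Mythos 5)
Your argument is viable but follows a genuinely different route from the paper's. The paper never touches the categories again once $\Theta$ is defined: it works entirely on the group-scheme side, using that $\varpi(X,x_0)\times\varpi(Y,y_0)\to\varpi(P,(x_0,y_0))$ is already known to be an isomorphism and that the $\mathbf q$'s are quotient maps. Concretely, $i_\natural$ is a closed immersion because $p_{\natural}\circ i_\natural=\mathrm{id}$, it is normal because it is the image under $\mathbf q_P$ of the normal subgroup $\varpi(X)\subset\varpi(P)$, the two images intersect trivially by a cokernel argument, so $(i_\natural,j_\natural)$ is a closed immersion by Bourbaki's lemma; surjectivity is then free, since $\mathbf q_P\circ(i_\natural,j_\natural)_{\varpi}$ is a composition of quotient maps. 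What the paper's route buys is that the only ``geometric'' input is the isomorphism at the level of $\varpi$; everything else is formal group-scheme theory, and in particular Lemma \ref{lem2} is not needed (nor, you should note, is it needed in your argument: restrictions of trivial bundles to slices are trivially trivial, and external tensor products of trivial bundles are trivial). What your categorical route buys is a direct description of $\mathcal T_{dR}(P)$ as the tensor product of $\mathcal T_{dR}(X)$ and $\mathcal T_{dR}(Y)$, which is more information than the bare isomorphism of group schemes.

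There are two soft spots you should repair. First, a directional issue: the external tensor product functor $\mathcal T_{dR}(X)\times\mathcal T_{dR}(Y)\to\mathcal T_{dR}(P)$ is \emph{not} the restriction functor $\omega^{(i_\natural,j_\natural)}$ to which \cite[Proposition 2.21]{DMOS} applies; it is restriction along $(p_{X\natural},p_{Y\natural})\colon\Theta(P)\to\Theta(X)\times\Theta(Y)$. Applying the criterion to it proves that \emph{this} map is an isomorphism, and one then concludes because $(i_\natural,j_\natural)$ is a section of it. (Alternatively, apply the criterion directly to $(i_\natural,j_\natural)$: the closed-immersion half is then immediate, since every representation of $\Theta(X)\times\Theta(Y)$ is a subquotient of sums of $A\boxtimes B$ and $A\boxtimes B$ is the restriction of $p_X^{\#}A\otimes p_Y^{\#}B\in\mathcal T_{dR}(P)$; the faithful-flatness half is your monodromy argument. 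This bypasses the step you identify as the main obstacle.) Second, the generation step as you state it is too strong and not quite the ``standard fact'': a representation $W$ of $G_1\times G_2$ need not be a subrepresentation of $(W|_{G_1})\boxtimes(W|_{G_2})$ on the nose, and the standard generation of $\mathrm{Rep}(G_1\times G_2)$ by outer products gives no control on which representations of the factors occur --- which is exactly the control you need to stay inside $\mathcal T_{dR}$. The correct statement, proved by feeding $W$ into the coaction $W\to W\otimes\mathcal O(G_1)$ and observing that the coefficient coalgebra of $W|_{G_1}$ is a quotient of finitely many copies of $W|_{G_1}$, is that $W$ is a subobject of a quotient of $\bigl((W|_{G_1})\boxtimes(W|_{G_2})\bigr)^{\oplus\dim W}$; since $i^{\#}(V,\mathbb D)$ and $j^{\#}(V,\mathbb D)$ lie in $\mathcal T_{dR}(X)$ and $\mathcal T_{dR}(Y)$ and these categories are closed under subquotients, this refinement closes the gap.
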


\begin{proof}
Let $p$ (respectively, $q$) be the natural projection of $P$ to $X$ (respectively, $Y$).
In what follows, we abandon reference to the base points. 

We know that the natural morphism 
\[(i_\natural,\,j_\natural)\,:\,\varpi(X)\times\varpi(Y)\,\longrightarrow \,\varpi(P)
\]
is an isomorphism. 

We claim that $i_\natural\,:\,\Theta(X)\,\longrightarrow\,
\Theta(P)$ is a closed and normal immersion.

First note that $i_\natural$ is closed since
$$p_\natural \circ i_\natural\,:\,\Theta(X)\,\longrightarrow\, \Theta(X)$$ is the identity
map. Next, $i_\natural$ is normal since $i_\natural\,:\,\varpi(X)\,\longrightarrow\, \varpi(P)$ is a
normal immersion and we posses the following commutative diagram 
\[
\begin{matrix}
\varpi(X) & \stackrel{i_\natural}{\longrightarrow} &\varpi(P)\\
{\mathbf q}_X \Big\downarrow\,\,\,\,\,\,\, && {\mathbf q}_P\Big\downarrow\,\,\,\,\,\,\, \\
\Theta(X)& \stackrel{i_\natural}{\longrightarrow} & \Theta(P)
\end{matrix}
\]
in which the vertical arrows are the quotient morphisms. This proves the claim.

Clearly, the same arguments apply to $j_\natural\,:\,\Theta(Y)\,\longrightarrow\,\Theta(P)$. 

Let \[\chi\,:\,\Theta(P)\,\longrightarrow\, Q\]
be the cokernel of $i_\natural\,:\,\Theta(X)\,\longrightarrow\, \Theta(P)$
\cite[16.3, Theorem]{waterhouse79}. It is easy to see that the
kernel of the composition
\[
\Theta(Y) \,\stackrel{j_\natural}{\longrightarrow}\,\Theta(P)\,\stackrel{\chi}{\longrightarrow}\, Q 
\]
is trivial; indeed, $q_\natural\,:\,\Theta(P)\,\longrightarrow\,\Theta(Y)$ factors as 
\[
\Theta(P)\,\stackrel{\chi}{\longrightarrow}\, Q\,\stackrel{r}{\longrightarrow}\Theta(Y)
\] 
since $q_\natural\circ i_\natural$ is trivial, and ${\rm id}_{\Theta(Y)}
\,=\,q_\natural\circ j_\natural$. We conclude that ${\rm Im}(i_\natural)\cap {\rm Im}(j_\natural)
\,=\,\{e\}$. Now, a well-known lemma from group theory says that
$$(i_\natural,\,j_\natural)\,:\,\Theta(X)\times\Theta(Y)\,\longrightarrow\, \Theta(P)$$ is a
normal monomorphism of group schemes \cite[I.4.9, p.~48, Proposition 15]{bourbakialgebra}. As
such, it is a closed immersion \cite[15.3, Theorem]{waterhouse79}. Finally,
since $$(i_\natural,\,j_\natural)\,:\,\varpi(X)\times\varpi(Y)\,\longrightarrow\, \varpi(P)$$
is a quotient morphism, we conclude that $$(i_\natural,\,j_\natural)\,:\,
\Theta(X)\times\Theta(Y)\,\longrightarrow\, \Theta(P)$$ is also one.
Hence, $\Theta(X)\times\Theta(Y)\,\simeq\, \Theta(P)$. 
\end{proof}

Take a pointed compact K\"ahler manifold $(X,\, x_0)$. Let
\begin{equation}\label{Ga}
\Gamma\, \subset\, H^0(X,\, \Omega^1_X)^*
\end{equation}
be the image of the homomorphism
$H_1(X,\, {\mathbb Z})\, \longrightarrow\, H^0(X,\, \Omega^1_X)^*$ that sends
any $$\gamma\, \in\, H_1(X,\, {\mathbb Z})$$ to the element of the dual vector space
$H^0(X,\, \Omega^1_X)^*$ defined by the integral
$$
\nu\, \longmapsto\, \int_\gamma \nu\, , \ \ \nu\, \in\, H^0(X,\, \Omega^1_X)\, .
$$
Let $\text{A}(X)\,=\, H^0(X,\, \Omega^1_X)^*/\Gamma$ be the Albanese variety of $X$. Let
\begin{equation}\label{ea}
{\mathcal A}\, :\, X\, \longrightarrow\, \text{A}(X)
\end{equation}
be the Albanese map defined by
$$
{\mathcal A}(x)(\nu)\, =\, \int_{x_0}^x \nu\, ;
$$
this does not depend on the choice of path from $x_0$ to $x$.
We have $H^0(X,\, \Omega^1_X)\,=\, H^0(\text{A}(X),\, \Omega^1_{\text{A}(X)})$, and the
homomorphism
\begin{equation}\label{al}
\alpha\,: \, H^0(\text{A}(X),\, \Omega^1_{\text{A}(X)})\, \longrightarrow\,
H^0(X,\, \Omega^1_X)
\end{equation}
defined by $\xi\, \longmapsto\, {\mathcal A}^*\xi$, for all $\xi\, \in
\, H^0(\text{A}(X),\, \Omega^1_{\text{A}(X)}),$ is an isomorphism.
Hence the homomorphism
\begin{equation}\label{1f}
\overline{\alpha}\, :\, H^1(\text{A}(X),\, {\mathcal O}_{\text{A}(X)})
\,=\, \overline{H^0(\text{A}(X),\, \Omega^1_{\text{A}(X)})}
\, \longrightarrow\, \overline{H^0(X,\, \Omega^1_X)}\,=\,H^1(X,\, {\mathcal O}_X)
\end{equation}
is an isomorphism.

In view of the isomorphism $\alpha$ in \eqref{al} we conclude that there is a natural bijection between 
the holomorphic connections on ${\mathcal O}^{\oplus r}_X$ and the holomorphic connections on 
${\mathcal O}^{\oplus r}_{\text{A}(X)}$; this isomorphism sends a holomorphic connection $D$ on 
${\mathcal O}^{\oplus r}_{\text{A}(X)}$ to the holomorphic connection ${\mathcal A}^*D$ on 
${\mathcal O}^{\oplus r}_X\,=\, {\mathcal A}^* {\mathcal O}^{\oplus r}_{\text{A}(X)}$.

If $D$ is integrable, then ${\mathcal A}^*D$ is integrable. Notice that the converse need not be true in 
general. To construct examples of non-integrable $D$ such that ${\mathcal A}^*D$ is integrable, 
we recall that there are examples of $(M,\, \mathbb{D})$, where $M$ is a compact Riemann surface 
and $\mathbb{D}$ is an irreducible holomorphic connection on ${\mathcal O}^{\oplus 2}_M$ \cite{CDHL}, 
\cite{BD}, \cite{BDH} (any holomorphic connection on a Riemann surface is automatically 
integrable). Since the fundamental group of a compact complex torus $\mathbb T$ is abelian, there 
is no irreducible integrable connection on a rank two bundle over $\mathbb T$. Also, the pullback of a 
reducible connection is reducible. Therefore, for any $(M,\, \mathbb{D})$ as above, the 
corresponding holomorphic connection on the Albanese variety $\text{A}(M)$ is not integrable.

Note that $\Gamma$ in \eqref{Ga} is a quotient of $\pi_1(X,\, x_0)$, because $H_1(X,\, {\mathbb 
Z})$ is a quotient of $\pi_1(X,\, x_0)$, and $\Gamma$ is a quotient of $H_1(X,\, {\mathbb Z})$.

\begin{proposition}\label{q2}
Let $D$ be an integrable holomorphic connection on ${\mathcal O}^{\oplus r}_X$. Then there is an
integrable holomorphic connection $\widetilde{D}$ on ${\mathcal O}^{\oplus r}_{{\rm A}(X)}$ such that
$$
({\mathcal O}^{\oplus r}_X,\, D)\,=\, ({\mathcal A}^*{\mathcal O}^{\oplus r}_{{\rm A}(X)},\,
{\mathcal A}^*\widetilde{D})\, ,
$$
where $\mathcal A$ is the map in \eqref{ea},
if and only if the monodromy representation $$\rho_D\,:\, \pi_1(X,\, x_0)\, \longrightarrow\, 
{\rm GL}(r,{\mathbb C})$$ for $D$ factors through the quotient group $\Gamma$ of $\pi_1(X,\, x_0)$.
\end{proposition}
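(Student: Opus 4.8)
The statement is really a "descent along the Albanese" criterion, so the proof splits into a straightforward direction and a real direction. The easy implication is the "only if": if $(\mathcal O^{\oplus r}_X,D)=(\mathcal A^*\mathcal O^{\oplus r}_{\mathrm A(X)},\mathcal A^*\widetilde D)$, then $\rho_D=\rho_{\widetilde D}\circ\mathcal A_*$, where $\mathcal A_*\colon\pi_1(X,x_0)\to\pi_1(\mathrm A(X),0)=H_1(X,\mathbb Z)$ is the map induced by the Albanese; but by construction $\mathcal A_*$ is exactly the quotient $\pi_1(X,x_0)\twoheadrightarrow H_1(X,\mathbb Z)$ followed by $H_1(X,\mathbb Z)\twoheadrightarrow\Gamma$ (the Albanese factors through $\Gamma$ because $\Gamma$ is, by its very definition in \eqref{Ga}, the image of $H_1(X,\mathbb Z)$ in $H^0(X,\Omega^1_X)^*$, i.e.\ the deck group of the universal cover of $\mathrm A(X)$ seen from $X$). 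Hence $\rho_D$ kills the kernel of $\pi_1(X,x_0)\to\Gamma$ and factors through $\Gamma$.

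For the "if" direction I would argue as follows. Suppose $\rho_D$ factors as $\pi_1(X,x_0)\twoheadrightarrow\Gamma\xrightarrow{\ \overline\rho\ }\mathrm{GL}(r,\mathbb C)$. The abelian group $\Gamma$ is a lattice in the vector space $H^0(X,\Omega^1_X)^*$, and $\pi_1(\mathrm A(X),0)=H_1(\mathrm A(X),\mathbb Z)$ is canonically this same $\Gamma$; moreover $\mathcal A_*\colon\pi_1(X,x_0)\to\pi_1(\mathrm A(X),0)$ is precisely the quotient map $\pi_1(X,x_0)\twoheadrightarrow\Gamma$. Therefore $\overline\rho$ defines a flat $\mathrm{GL}(r,\mathbb C)$-connection $(W,\nabla_W)$ on $\mathrm A(X)$ whose $\mathcal A$-pullback has monodromy $\overline\rho\circ\mathcal A_*=\rho_D$, so by the Riemann--Hilbert correspondence (the equivalence $\mathcal C_{dR}\simeq\mathcal C_B$ of Section \ref{se2}) we get $(\mathcal A^*W,\mathcal A^*\nabla_W)\simeq(\mathcal O^{\oplus r}_X,D)$ \emph{as integrable connections}. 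The only thing left is to see that the holomorphic bundle $W$ on $\mathrm A(X)$ is itself trivial — then $(W,\nabla_W)=:(\mathcal O^{\oplus r}_{\mathrm A(X)},\widetilde D)$ does the job.

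The main obstacle is exactly this last point: proving that $W$ is holomorphically trivial. Here is how I would get it. The bundle $W$ carries an integrable holomorphic connection, so $c_1(W)=0$ (as in the proof of Proposition \ref{23.04.2020--1}); thus $\det W\in\mathrm{Pic}^\tau(\mathrm A(X))$, and since $\mathcal A^*\det W=\det(\mathcal O^{\oplus r}_X)=\mathcal O_X$ and the pullback map $\mathcal A^*\colon\mathrm{Pic}^0(\mathrm A(X))\to\mathrm{Pic}^0(X)$ is injective (it is the canonical isomorphism on $H^1(-,\mathcal O)$ dualized — this is precisely $\overline\alpha$ in \eqref{1f}), we get $\det W=\mathcal O_{\mathrm A(X)}$. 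To upgrade this to triviality of $W$ itself I would invoke that $\mathcal A^*\colon H^1(\mathrm A(X),\mathcal O)\to H^1(X,\mathcal O)$ is an isomorphism (the map $\overline\alpha$ of \eqref{1f}) together with Atiyah's classification of holomorphic bundles on complex tori / the fact that a vector bundle of degree zero with a flat holomorphic connection on an abelian variety is a successive extension of flat line bundles: run induction on $r$, using that the obstruction to splitting off a line subbundle lives in an $\mathrm{Ext}^1$ computed by $H^1(\mathrm A(X),\mathcal O)$, and that $\mathcal A^*$ identifies this $H^1$ with $H^1(X,\mathcal O)$, where the corresponding obstruction vanishes because $\mathcal A^*W=\mathcal O^{\oplus r}_X$ is trivial. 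Concretely: a flat line sub-connection of $(W,\nabla_W)$ pulls back to a flat line sub-connection of the trivial $(\mathcal O^{\oplus r}_X,D)$, which is $(\mathcal O_X,\mathrm d+\beta)$ for some $\beta\in H^0(X,\Omega^1_X)$; by the isomorphism $\alpha$ of \eqref{al} every such $\beta$ descends to $\mathrm A(X)$, so the line sub-connection on $\mathrm A(X)$ has trivial underlying bundle, and one peels it off and iterates. Assembling these peeled-off trivial line bundles shows $W\cong\mathcal O^{\oplus r}_{\mathrm A(X)}$, which finishes the proof.
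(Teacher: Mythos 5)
Your proof is correct, and while it follows the same overall skeleton as the paper's (easy direction via $\mathcal A_*=\,$quotient onto $\Gamma$; Riemann--Hilbert to produce $(W,\nabla_W)$ on ${\rm A}(X)$ with $\mathcal A^*(W,\nabla_W)\simeq(\mathcal O_X^{\oplus r},D)$; reduction to the triviality of $W$), it handles the key triviality step differently. The paper first decomposes the $\Gamma$-module as $\bigoplus_i L_i\otimes U_i$ with $L_i$ one-dimensional and $U_i$ unipotent, disposes of the $L_i$ via the isomorphism $\mathrm{Pic}^0({\rm A}(X))\to\mathrm{Pic}^0(X)$, and then shows that the pulled-back unipotent filtration on $\mathcal O_X^{\oplus s}$ splits by an explicit projection argument combined with Atiyah's Krull--Schmidt theorem, before descending the splitting through $H^1({\rm A}(X),\mathcal O)\stackrel{\sim}{\to}H^1(X,\mathcal O)$. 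You instead run a single induction on the rank, peeling off one flat line sub-connection at a time (which exists because $\Gamma$ is abelian, hence its representations admit a common eigenvector), using Proposition \ref{23.04.2020--1} together with dualization to see that both the pulled-back sub and quotient are trivial, the $\mathrm{Pic}^0$ isomorphism to trivialize the line, and the $H^1(\mathcal O)$ isomorphism \eqref{1f} to descend the splitting. This avoids both the tensor decomposition and the Krull--Schmidt input, and is arguably cleaner. The one step you should make explicit is the assertion that the pulled-back obstruction class vanishes ``because $\mathcal A^*W$ is trivial'': for a general extension, triviality of the middle term does not force splitting, but here the sub is $\mathcal O_X$ and the quotient is $\mathcal O_X^{\oplus(r-1)}$, so the count $h^0(\mathcal O_X^{\oplus r})=r$ forces the connecting map $H^0(\mathcal O_X^{\oplus(r-1)})\to H^1(X,\mathcal O_X)$, which records the extension class, to vanish; with that line added the argument is complete.
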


\begin{proof}
We have $\pi_1({\rm A}(X))\,=\, \Gamma$, and the homomorphism
$$
{\mathcal A}_*\, :\, \pi_1(X,\, x_0)\, \longrightarrow\, \pi_1({\rm A}(X))
$$
induced by the Albanese map $\mathcal A$ in \eqref{ea} actually coincides with the quotient homomorphism
$\pi_1(X,\, x_0)\, \longrightarrow\, \Gamma$. Therefore, if
$$
({\mathcal O}^{\oplus r}_X,\, D)\,=\, ({\mathcal A}^*{\mathcal O}^{\oplus r}_{{\rm A}(X)},\,
{\mathcal A}^*\widetilde{D})
$$
for some integrable holomorphic connection $\widetilde{D}$ on ${\mathcal O}^{\oplus r}_{{\rm A}(X)}$,
then the monodromy representation $\rho_D\,:\, \pi_1(X,\, x_0)\, \longrightarrow\, 
{\rm GL}(r,{\mathbb C})$ for $D$ factors through the quotient group $\Gamma$ of $\pi_1(X,\, x_0)$.

To prove the converse, assume that the monodromy representation $\rho_D\,:\, \pi_1(X,\, x_0)\, 
\longrightarrow\, {\rm GL}(r,{\mathbb C})$
for $D$ does factor through the quotient group $\Gamma$ of 
$\pi_1(X,\, x_0)$. Therefore, the representation
\begin{equation}\label{rd}
\rho'_D\,:\, \Gamma\,=\, \pi_1({\rm A}(X),\, {\mathcal A}(x_0)) \, \longrightarrow\,
{\rm GL}(r,{\mathbb C})
\end{equation}
given by $\rho_D$ produces a pair $(V,\, \widetilde{D})$, where
\begin{itemize}
\item $V$ is a holomorphic vector bundle of rank $r$ on ${\rm A}(X)$,

\item $\widetilde D$ is an integrable holomorphic connection on $V$, and

\item $({\mathcal A}^*V,\, {\mathcal A}^*\widetilde{D})\,=\, ({\mathcal O}^{\oplus r}_X,\, D)$.
\end{itemize}
Consequently, to prove the proposition it suffices to show that
\begin{equation}\label{s}
V\,\simeq\, {\mathcal O}^{\oplus r}_{{\rm A}(X)}\, .
\end{equation}

Consider ${\mathbb C}^r$ as a $\Gamma$--module using the homomorphism $\rho'_D$ (in \eqref{rd})
together with the standard action of ${\rm GL}(r,{\mathbb C})$ on ${\mathbb C}^r$.
Since $\Gamma$ is an abelian group, we contend that the $\Gamma$--module ${\mathbb C}^r$
decomposes as
\begin{equation}\label{nd}
\mathbb C^r\,=\, \bigoplus_{i=1}^m L_i\otimes U_i\, ,
\end{equation}
where $L_i$ (respectively, $U_i$) is a one-dimensional (respectively, unipotent)
representation of $\Gamma$. Indeed, let $S$ be an indecomposable summand 
of the $\Gamma$--module $\mathbb C^r$ and, for any given $\gamma\,\in\, \Gamma$, consider the
decomposition of $S$ into generalized eigenspaces: $\bigoplus_iS_i$. Since every 
$\delta\,\in\,\Gamma$ commutes with $\gamma$, we can say that every $S_i$ is invariant
under $\Gamma$; it follows that $\gamma$ has a single eigenvalue in $S$ (recall
that $S$ is indecomposable). 
Associating to each $\gamma$ the previous eigenvalue, we get a homomorphism $\Gamma\,\longrightarrow\, \mathbb C^*$ and hence
a one dimensional representation $L$. Now, $L^\vee\otimes S$ is indecomposable and has
only one eigenvalue, namely $1$; here $L^\vee$ denotes
the dual of $L$. Simultaneous triangularization --- which follows easily
from the fact that $\Gamma$ is abelian --- now shows
that $L^\vee\otimes S$ is unipotent thus establishing the decomposition in \eqref{nd}.

{}From \eqref{nd} we obtain a decomposition of the connection $(V,\,\widetilde D)$ as 
\begin{equation}\label{nd2}
(V,\,\widetilde D)\,=\,\bigoplus_{i=1}^m {\mathcal L}_i\otimes {\mathcal U}_i\, ,
\end{equation}
where ${\mathcal L}_i$ stands for a rank one integrable connection and ${\mathcal U}_i$
stands for a connection with unipotent monodromy. Note that, since ${\mathcal U}_i$
has unipotent monodromy, each ${\mathcal L}_i$ is a sub-connection of $V$
and hence ${\mathcal A}^*{\mathcal L}_i$ is a sub-connection of ${\mathcal A}^*V$. Using a dualization and Proposition \ref{23.04.2020--1}, we can say that the vector bundle underlying
${\mathcal A}^*{\mathcal L}_i$ is trivial. Since the homomorphism
$$
\text{Pic}^0({\rm A}(X))\, \longrightarrow\, \text{Pic}^0(X)\, ,\ \ L\,\longmapsto\, {\mathcal A}^*L\, ,
$$
where $\mathcal A$ is the map in \eqref{ea}, is an isomorphism, we conclude that 
\begin{equation}\label{27.04.2020--1}
\text{the~ underlying~ holomorphic~ line~ bundle~ of~ ${\mathcal L}_i$~ in~ \eqref{nd2}~ is~ trivial}
\end{equation}
for all $1\, \leq\, i\,\leq\, m$. 

Next, we note that the connection ${\mathcal A}^*{\mathcal U}_i$ is a sub-connection of
${\mathcal A}^*(V)\otimes({\mathcal A}^*{\mathcal L}^\vee_i)$ and hence, applying a
dualization and Proposition \ref{23.04.2020--1}, from \eqref{27.04.2020--1} we conclude that
$${\mathcal A}^*{\mathcal U}_i\,\in\,{\mathcal T}_{dR}(X)\, .$$

The holomorphic vector bundle underlying ${\mathcal U}_i$ will be denoted by $U_i$.
The integrable holomorphic connection on $U_i$ defining ${\mathcal U}_i$ will be denoted by $D_i$.
Let $s$ be the rank of $U_i$.

Since ${\mathcal U}_i$ is an integrable connection with unipotent
monodromy, there is a filtration of holomorphic subbundles
\begin{equation}\label{f}
0\,=\, V_0\, \subset\, V_1\, \subset\, \cdots\, \subset\, V_i\, \subset\, \cdots\, \subset\,
V_{r-1}\, \subset\, V_s\, =\, U_i
\end{equation}
such that
\begin{enumerate}
\item ${\rm rank}(V_i)\,=\, i$ for all $1\, \leq\, i\, \leq\, s$,

\item every quotient $V_i/V_{i-1}$ is ${\mathcal O}_{{\rm A}(X)}$,

\item each $V_i$ is preserved by $D_i$, and

\item for every $1\, \leq\, i\, \leq\, s$, the connection on $V_i/V_{i-1}$ induced by $D_i$ is the
trivial connection on ${\mathcal O}_{{\rm A}(X)}$ given by the de Rham differential.
\end{enumerate}
Let
\begin{equation}\label{c2}
0\,=\, {\mathcal A}^* V_0\, \subset\, {\mathcal A}^*V_1\, \subset\, \cdots\, \subset\,
{\mathcal A}^*V_i\, \subset\, \cdots\, \subset\,
{\mathcal A}^* V_{s-1}\, \subset\, {\mathcal A}^*V_s\, =\, {\mathcal A}^*U_i\,=\,
{\mathcal O}^{\oplus s}_X
\end{equation}
be the filtration of ${\mathcal A}^*U_i\,=\, {\mathcal O}^{\oplus s}_X$ obtained by pulling back
the filtration in \eqref{f} using the map ${\mathcal A}$. We know that
$$
({\mathcal A}^*V_i)/({\mathcal A}^*V_{i-1})\,=\, {\mathcal A}^*(V_i/V_{i-1})\,=\, {\mathcal O}_X
$$
for all $1\, \leq\, i\, \leq\, s$.

We will prove that the filtration in \eqref{c2} splits holomorphically.

For any $1\, \leq\, j\, \leq\, s$, let ${\mathcal S}_j\,=\, {\mathcal O}_X\, \subset\,
{\mathcal O}^{\oplus r}_X$ be the $j$--th factor in the direct sum ${\mathcal O}^{\oplus
s}_X$. Let $\psi_j$ denote the following composition of homomorphisms:
$$
{\mathcal S}_j\,\hookrightarrow\, {\mathcal O}^{\oplus s}_X\,=\, {\mathcal A}^*V_s\, \longrightarrow\,
({\mathcal A}^*V_s)/({\mathcal A}^*V_{s-1})\,=\, {\mathcal A}^*(V_s/V_{s-1})\,=\, {\mathcal O}_X
$$
(see \eqref{c2}), where ${\mathcal A}^*V_s\, \longrightarrow\,
({\mathcal A}^*V_s)/({\mathcal A}^*V_{s-1})$ is the natural quotient map. For some $1\, \leq\, j_0\, \leq\, s$,
the above homomorphism $\psi_{j_0}$ is nonzero. This
$j_0$ is evidently an isomorphism. Hence we conclude that
\begin{equation}\label{a2p}
{\mathcal O}^{\oplus s}_X\,=\, {\mathcal S}_{j_0}\oplus \left(\bigoplus_{j=1,j\not=j_0}^s {\mathcal S}_j\right)
\,=\, {\mathcal A}^*V_s\,=\, {\mathcal A}^*V_{s-1}\oplus {\mathcal O}_X\, ;
\end{equation}
the above direct summand ${\mathcal O}_X\, \subset\, {\mathcal A}^*V_s$ is the image of ${\mathcal S}_{j_0}$
by the identification in \eqref{c2} between ${\mathcal A}^*V_s$ and ${\mathcal O}^{\oplus s}_X$. 
Applying \cite[p.~315, Theorem 2]{At1} to the decomposition in \eqref{a2p} we conclude that
$$
{\mathcal O}^{\oplus (s-1)}_X\,=\, 
\bigoplus_{j=1,j\not=j_0}^s {\mathcal S}_j\,=\, {\mathcal A}^*V_{s-1}
$$
Repeating the above argument, after replacing ${\mathcal O}^{\oplus s}_X$ (respectively, ${\mathcal A}^*V_s$) by
${\mathcal O}^{\oplus (s-1)}_X$ (respectively, ${\mathcal A}^*V_{s-1}$), we conclude that
$$
{\mathcal A}^*V_{s-1}\,=\, {\mathcal A}^*V_{s-2}\oplus {\mathcal O}_X\, .
$$
Now proceeding inductively we conclude that the filtration in \eqref{c2} splits holomorphically.

The homomorphism
$$
H^1({\rm A}(X), \, {\mathcal O}_{{\rm A}(X)})\, \longrightarrow\, H^1(X,\,
{\mathcal O}_X)\, ,\ \ \theta\, \longmapsto\, {\mathcal A}^*\theta
$$
is an isomorphism. Since the extensions of ${\mathcal O}_{{\rm A}(X)}$ by ${\mathcal O}_{{\rm A}(X)}$
are parametrized by the cohomology $H^1({\rm A}(X), \, {\mathcal O}_{{\rm A}(X)})$,
from this and the fact that the filtration in \eqref{c2} splits holomorphically
it follows that the filtration in \eqref{f} splits holomorphically. This proves that
$U_i\,=\, {\mathcal O}^{\oplus s}_{{\rm A}(X)}$.
\end{proof}

Let us now study the case of ``abelian'' K\"ahler manifolds by first recalling certain fundamental facts 
from the theory of affine group schemes.

Let $U$ be an algebraic affine and unipotent group scheme over $\mathbb C$ \cite[8.3]{waterhouse79}: there is 
a closed immersion of $U$ into some group of strict upper triangular matrices. Endowing the nilpotent Lie 
algebra $\mathrm {Lie}(U)$ with its Baker--Campbell--Hausdorff
multiplication, it is known that $$\exp\,:\,\mathrm{Lie}(U)\,\longrightarrow 
\,U$$ is an isomorphism of group schemes \cite[Theorem XVII.4.2, p. 232.]{hochschild81}; in particular, if $U$ is 
in addition a commutative group scheme, then $U\,\simeq\, {\mathbb G}_a^r$ for some
$r$. Moreover, $r\,=\,\dim_{\mathbb C}\mathrm{Hom}(U,\,\mathbb G_a)$
\cite[Theorem 8.4]{waterhouse79}. Said differently,
\[
U\,\simeq\, \mathrm{Hom}(U,\,{\mathbb G}_a)\, .
\]

Still in the topic of affine group schemes, for any given abstract abelian group $\Lambda$, we shall denote by 
$\mathrm{Diag}(\Lambda)$ the diagonalizable group scheme corresponding to the abstract group $\Lambda$ as 
explained in \cite[2.2]{waterhouse79}; on the level of $\mathbb C$-points, ${\rm Diag}(\Lambda)$ is just 
$\mathrm{Hom}(\Lambda,\,{\mathbb C}^\times)$.

\begin{proposition}
Suppose that $\pi_1(X,\,x_0)$ is abelian. Then 
\[
\Theta(X,\,x_0)\,\simeq\, H^0(X,\,\Omega_X^1)\times{\rm Diag}(H^0(X,\,\Omega_X^1))\, .
\] 
\end{proposition}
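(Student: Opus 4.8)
The plan is to combine the structure theory of commutative affine group schemes over $\mathbb C$ with the computations of the character groups $\mathbb X$ and $\mathbb X_a$ furnished by Lemmas \ref{05.05.2020--1} and \ref{01.05.2020--1}. First I would note that, since $\pi_1(X,\,x_0)$ is abelian, every algebraic quotient of $\varpi(X,\,x_0)$ is the Zariski closure of a homomorphic image of an abelian group, hence abelian; as $\varpi(X,\,x_0)$ is the projective limit of its algebraic quotients, it is commutative, and therefore so is its quotient $\Theta(X,\,x_0)$ under the morphism $\mathbf{q}_X$ of \eqref{e6}.

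Next I would invoke the standard fact that a commutative affine algebraic group $G$ over $\mathbb C$ splits canonically as a product $G_{\rm u}\times G_{\rm d}$, where $G_{\rm u}$ is the closed subgroup of its unipotent elements and $G_{\rm d}$ the subgroup of its semisimple elements, the latter being diagonalizable because $\mathbb C$ is algebraically closed. This decomposition is functorial --- a homomorphism of commutative algebraic groups carries unipotent elements to unipotent elements and semisimple to semisimple --- so it is respected by the transition morphisms of any presentation of $\Theta(X,\,x_0)$ as a projective limit of commutative algebraic quotients. Taking limits separately on the two factors yields
\[
\Theta(X,\,x_0)\,\simeq\,\Theta_{\rm u}\times\Theta_{\rm d}\, ,
\]
with $\Theta_{\rm u}$ pro-unipotent and commutative and $\Theta_{\rm d}$ pro-diagonalizable.

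Then I would identify the two factors by the two kinds of characters. A homomorphism $\Theta(X,\,x_0)\,\to\,{\mathbb G}_m$ is trivial on $\Theta_{\rm u}$ (a unipotent group scheme has no nonzero character in characteristic zero), so $\mathbb X(\Theta(X,\,x_0))\,=\,\mathbb X(\Theta_{\rm d})$; since $\mathrm{Diag}$ recovers a pro-diagonalizable group scheme from its character group, $\Theta_{\rm d}\,\simeq\,\mathrm{Diag}(\mathbb X(\Theta(X,\,x_0)))$. By Lemma \ref{05.05.2020--1} the assignment $\alpha\,\longmapsto\,(\mathcal O_X,\,\mathrm d+\alpha)$ is an isomorphism of abelian groups from $H^0(X,\,\Omega^1_X)$ onto $\mathbb X(\Theta(X,\,x_0))$ (the tensor product of $(\mathcal O_X,\,\mathrm d+\alpha)$ and $(\mathcal O_X,\,\mathrm d+\beta)$ being $(\mathcal O_X,\,\mathrm d+\alpha+\beta)$), whence $\Theta_{\rm d}\,\simeq\,\mathrm{Diag}(H^0(X,\,\Omega^1_X))$. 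Dually, a homomorphism $\Theta(X,\,x_0)\,\to\,{\mathbb G}_a$ is trivial on $\Theta_{\rm d}$ (a pro-diagonalizable group scheme has no nonzero additive character), so $\mathbb X_a(\Theta(X,\,x_0))\,=\,\mathrm{Hom}(\Theta_{\rm u},\,{\mathbb G}_a)$, and by Lemma \ref{01.05.2020--1} this is isomorphic to $H^0(X,\,\Omega^1_X)$, a finite-dimensional $\mathbb C$-vector space; put $r\,=\,\dim_{\mathbb C}H^0(X,\,\Omega^1_X)$.

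Finally I would show that $\Theta_{\rm u}$ is in fact algebraic, so that the isomorphism $U\,\simeq\,\mathrm{Hom}(U,\,{\mathbb G}_a)$ recalled before the proposition applies to it. Presenting $\Theta_{\rm u}$ as a projective limit of commutative unipotent algebraic quotients $U_i\,\simeq\,{\mathbb G}_a^{r_i}$ with $r_i\,=\,\dim\mathrm{Hom}(U_i,\,{\mathbb G}_a)$, each $r_i$ is at most $r$ since the surjection $\Theta_{\rm u}\,\to\,U_i$ embeds $\mathrm{Hom}(U_i,\,{\mathbb G}_a)$ into $\mathrm{Hom}(\Theta_{\rm u},\,{\mathbb G}_a)$; because $\mathrm{Hom}(-,\,{\mathbb G}_a)$ is an anti-equivalence between commutative unipotent algebraic $\mathbb C$-groups and finite-dimensional $\mathbb C$-vector spaces, the inverse system stabilizes once $r_i$ attains the value $r$, so $\Theta_{\rm u}\,\simeq\,{\mathbb G}_a^r\,\simeq\,\mathrm{Hom}(\Theta_{\rm u},\,{\mathbb G}_a)\,\simeq\,H^0(X,\,\Omega^1_X)$. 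Combining this with the identification of $\Theta_{\rm d}$ gives the asserted isomorphism $\Theta(X,\,x_0)\,\simeq\,H^0(X,\,\Omega^1_X)\times\mathrm{Diag}(H^0(X,\,\Omega^1_X))$. I expect the step demanding the most care to be this last one --- the descent of the unipotent/diagonalizable splitting to the projective limit and the verification that the pro-unipotent factor is of finite type; the other steps are immediate consequences of the two lemmas already proved.
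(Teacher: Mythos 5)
Your proof is correct and follows essentially the same route as the paper: split the commutative group scheme $\Theta(X,\,x_0)$ into a unipotent and a diagonalizable factor, then identify these factors via the multiplicative and additive characters computed in Lemmas \ref{05.05.2020--1} and \ref{01.05.2020--1}. Your final step, checking that the pro-unipotent factor is actually algebraic so that the isomorphism $U\simeq\mathrm{Hom}(U,\,{\mathbb G}_a)$ applies, is a detail the paper leaves implicit, and you handle it correctly.
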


\begin{proof}
Since $\pi_1(X,\,x_0)$ is abelian, the group scheme $\varpi(X,\,x_0)$ is abelian. This implies that
$\Theta(X,\,x_0)$ is abelian and hence is a product of a unipotent $U$ and a diagonal group
scheme $\mathbb D$ \cite[p.~70, Theorem, 9.5]{waterhouse79}. The arguments made in Lemma \ref{05.05.2020--1} and
Lemma \ref{01.05.2020--1} jointly with the preliminary material on group schemes recalled above
now allow us to explicitly determine $U$ and $\mathbb D$ as wanted. Indeed, the group of characters of $U\times
{\mathbb D}$
(respectively, additive characters) is simply the group of characters of $\mathbb D$ (respectively, additive
characters of $U$) as explained in \cite[Chapter 8]{waterhouse79}, Corollary in 8.3 and Exercise 6.
\end{proof}

\section{Riemann surfaces and compact complex tori}\label{se4}

\subsection{Neutral Tannakian category for a Riemann surface}

Let $X$ and $Y$ be two compact connected Riemann surfaces of common genus $g$, with $g\, \geq\,1$.
Fix a point $x_0\, \in\, X$. Let
\begin{equation}\label{e3}
\beta\, :\, X\, \longrightarrow\, Y
\end{equation}
be a $C^\infty$ orientation preserving diffeomorphism. Let
$$
\widehat{\beta}\, :\, \pi_1(X,\, x_0)\, \longrightarrow\, \pi_1(Y,\, \beta(x_0))
$$
be the homomorphism of fundamental groups induced by $\beta$. It produces a unique algebraic homomorphism
\begin{equation}\label{e4}
\beta_\natural\, :\, \varpi(X,\, x_0)\, \longrightarrow\, \varpi(Y,\, \beta(x_0))
\end{equation}
such that $\beta_\natural\circ\phi\,=\,\phi_Y\circ\widehat{\beta}$, where $\phi$ is the
homomorphism in \eqref{e5} and $$\phi_Y\, :\, \pi_1(Y,\, \beta(x_0))
\, \longrightarrow\, \varpi(Y,\, \beta(x_0))$$ is the similar homomorphism for $Y$.

We will say that the homomorphism $\beta_\natural$ in \eqref{e4} descends to a homomorphism
from ${\Theta}(X,\, x_0)$, constructed in \eqref{e2}, to ${\Theta}(Y,\, \beta(x_0))$ if
there is a homomorphism
$$
\beta'_\natural\, :\, {\Theta}(X,\, x_0) \, \longrightarrow\, {\Theta}(Y,\,\beta(x_0))
$$
such that
$$
\beta'_\natural\circ\mathbf{q}_X\,=\, \mathbf{q}_Y\circ\beta_\natural\, ,
$$
where $\mathbf{q}_X$ is the homomorphism in \eqref{e6} and $$\mathbf{q}_Y\, :\, \varpi(Y,\, 
\beta(x_0))\, \longrightarrow\, {\Theta}(Y,\, \beta(x_0))$$ is the similar homomorphism for 
$Y$, while $\beta_\natural$ is constructed in \eqref{e4}.

\begin{theorem}\label{thm1}
Assume that the homomorphism $\beta_\natural$ in \eqref{e4} descends to a homomorphism
from ${\Theta}(X,\, x_0)$ to ${\Theta}(Y,\, \beta(x_0))$. Then the
two Riemann surfaces $X$ and $Y$ are isomorphic.
\end{theorem}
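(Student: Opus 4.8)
The plan is to extract from the hypothesis a statement about the additive characters of $\Theta$, and then to translate that into a statement about $H^0$ of holomorphic one-forms that pins down the complex structure. By Lemma~\ref{01.05.2020--1}, the functorial identification $\mathbb X_a(\Theta(X,\,x_0))\simeq H^0(X,\,\Omega^1_X)$ holds, and the canonical map $\mathbb X_a(\Theta(X,\,x_0))\to\mathbb X_a(\varpi(X,\,x_0))\simeq H^1_{dR}(X,\,\mathbb C)$ is the inclusion $\lambda_X$ of the space of holomorphic one-forms into $H^1(X,\,\mathbb C)$ (the Hodge filtration step $F^1$). The diffeomorphism $\beta$ induces an isomorphism $\beta^*\colon H^1(Y,\,\mathbb C)\to H^1(X,\,\mathbb C)$, compatible via $\beta_\natural$ with $\lambda_X$ and $\lambda_Y$. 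The hypothesis that $\beta_\natural$ descends to $\beta'_\natural\colon\Theta(X,\,x_0)\to\Theta(Y,\,\beta(x_0))$ means, after applying the functor $\mathbb X_a(-)$ and using Lemma~\ref{01.05.2020--1}, that $\beta^*$ carries the subspace $H^0(Y,\,\Omega^1_Y)\subset H^1(Y,\,\mathbb C)$ into the subspace $H^0(X,\,\Omega^1_X)\subset H^1(X,\,\mathbb C)$. Since $\beta$ is an orientation-preserving diffeomorphism of surfaces of the same genus $g\geq 1$, $\beta^*$ is an isomorphism preserving the cup-product intersection form up to sign; dimension count ($\dim H^0=g$, preserved) then forces $\beta^*(H^0(Y,\,\Omega^1_Y))=H^0(X,\,\Omega^1_X)$.

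Next I would upgrade this to an identification of polarized Hodge structures. The intersection pairing on $H^1$ together with the Hodge decomposition $H^1(X,\,\mathbb C)=H^0(X,\,\Omega^1_X)\oplus\overline{H^0(X,\,\Omega^1_X)}$ is exactly the data of the principally polarized Hodge structure of weight one on $H^1(X,\,\mathbb Z)$. Because $\beta^*$ is induced by a diffeomorphism it respects the integral lattice $H^1(Y,\,\mathbb Z)\to H^1(X,\,\mathbb Z)$ and (being orientation-preserving) the intersection form; we have just shown it respects the Hodge filtration. Hence $\beta^*$ is an isomorphism of principally polarized integral Hodge structures $H^1(Y,\,\mathbb Z)\xrightarrow{\sim}H^1(X,\,\mathbb Z)$. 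Dualizing, one gets an isomorphism of the corresponding polarized abelian varieties, i.e. an isomorphism of Jacobians $J(X)\cong J(Y)$ respecting the principal polarizations (equivalently, the theta divisors). By the classical Torelli theorem for curves, a compact Riemann surface of genus $g\geq 1$ is determined by its principally polarized Jacobian, so $X\cong Y$ as Riemann surfaces. (For $g=1$ one simply notes that $X$ is isomorphic to $J(X)=\mathrm A(X)$.)

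The main obstacle I anticipate is the first step: carefully justifying that ``$\beta_\natural$ descends to $\Theta$'' really does translate, under the functor $\mathbb X_a$, into the containment $\beta^*(H^0(Y,\,\Omega^1_Y))\subseteq H^0(X,\,\Omega^1_X)$. One must check that the square
\[
\begin{matrix}
\mathbb X_a(\Theta(Y,\,\beta(x_0))) & \longrightarrow & \mathbb X_a(\varpi(Y,\,\beta(x_0)))\\
\downarrow && \downarrow\\
\mathbb X_a(\Theta(X,\,x_0)) & \longrightarrow & \mathbb X_a(\varpi(X,\,x_0))
\end{matrix}
\]
commutes, where the horizontal maps are the $\lambda$'s of Lemma~\ref{01.05.2020--1}, the vertical maps are induced by $\beta'_\natural$ and $\beta_\natural$, and the right-hand vertical map is identified with $\beta^*$ on $H^1_{dR}$ via the de Rham description of $\varpi$; this is the compatibility asserted at the end of Lemma~\ref{01.05.2020--1} applied to both $X$ and $Y$, together with functoriality of the construction $(E,\nabla)\mapsto[E,\nabla]$ under pullback by the (smooth, not holomorphic) map $\beta$ — here one uses that $\beta^*$ on $H^1_{dR}$ agrees with $\beta_\natural$ via the Riemann--Hilbert identification of $\mathcal C_{dR}$ with $\mathcal C_B$. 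Once this commuting square is in hand, the existence of $\beta'_\natural$ immediately yields that $\beta^*$ maps $\operatorname{im}\lambda_Y=H^0(Y,\,\Omega^1_Y)$ into $\operatorname{im}\lambda_X=H^0(X,\,\Omega^1_X)$, and the rest is the Hodge-theoretic and Torelli machinery above. The remaining bookkeeping — that $\beta$ orientation-preserving gives $+$ on the intersection form, and that the resulting map of lattices is the one induced by $\beta$ — is routine.
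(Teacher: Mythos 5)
Your proposal is correct and follows essentially the same route as the paper's proof: apply $\mathbb X_a$ to the descent diagram, use Lemma \ref{01.05.2020--1} to identify the resulting square with the inclusion of $H^0(\Omega^1)$ into $H^1_{dR}$ intertwined by $\beta^*$, deduce that $\beta^*$ matches the Hodge filtrations, and conclude via the isomorphism of principally polarized Jacobians and the classical Torelli theorem. Your explicit dimension count upgrading the containment $\beta^*(H^0(Y,\Omega^1_Y))\subseteq H^0(X,\Omega^1_X)$ to an equality is a point the paper leaves implicit, but it is the same argument.
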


\begin{proof}
Let $\beta'_\natural\, :\, {\Theta}(X,\, x_0) \, \longrightarrow\, {\Theta}(Y,\,\beta(x_0))$
be a homomorphism such that the diagram
\begin{equation}\label{e7}
\begin{matrix}
\varpi(X,\, x_0)& \stackrel{\beta_\natural}{\longrightarrow}& \varpi(Y,\, \beta(x_0))\\
\mathbf{q}_X \Big\downarrow\,\,\,\,\,\,\, && \mathbf{q}_Y\Big\downarrow\,\,\,\,\,\,\, \\
{\Theta}(X,\, x_0)&\stackrel{\beta'_\natural}{\longrightarrow}& {\Theta}(Y,\,\beta(x_0))
\end{matrix}
\end{equation}
is commutative. Let
\begin{equation}\label{e8}
\begin{matrix}
\mathbb X_a( \Theta (Y,\, \beta(x_0)) )& {\longrightarrow} &
\mathbb X_a( \Theta (X,\, x_0) )\\
\lambda^2\Big\downarrow\,\,\,\, &&\lambda^1 \Big\downarrow\,\,\,\,\\
\mathbb X_a(\varpi(Y,\, \beta(x_0)))& \stackrel{\rho}{\longrightarrow} &
\mathbb X_a( \varpi(X,\, x_0) )
\end{matrix}
\end{equation}
be the corresponding commutative diagram of vector spaces (see Section \ref{01.05.2020--2}).

Now, according to Lemma \ref{01.05.2020--1}, 
$$
\mathbb X_a( \Theta (X,\, x_0) )\,=\, H^0(X,\, \Omega^1_X)
\ \ \text{ and }\ \ \mathbb X_a( \varpi(X,\, x_0) )\,=\, H^1_{dR}(X,\, {\mathbb C})\, .
$$
Similarly, we have
$$
\mathbb X_a( \Theta(Y,\, \beta(x_0)) )\,=\, H^0(Y,\, \Omega^1_Y)
\ \ \text{ and }\ \ \mathbb X_a( \varpi(Y,\, \beta(x_0)) )\,=\, H^1_{dR}(Y,\, {\mathbb C})\,. 
$$
 The homomorphism
$\lambda^1$ in \eqref{e8} is the natural inclusion of $H^0(X,\, \Omega^1_X)$ in
$H^1_{dR}(X,\, {\mathbb C})$ given by the fact that any holomorphic $1$--form on $X$ is closed.
The homomorphism
\begin{equation}\label{r}
\rho\, :\, H^1_{dR}(Y,\, {\mathbb C})\,=\, \mathbb X_a( \varpi(Y,\, \beta(x_0)) )
\, \longrightarrow\, \mathbb X_a( \varpi(X,\, x_0) )\,=\,
H^1_{dR}(X,\, {\mathbb C})
\end{equation}
in \eqref{e8} coincides with the pullback homomorphism
\begin{equation}\label{e9}
\beta^*\, :\, H^1_{dR}(Y,\, {\mathbb C})\, \longrightarrow\,H^1_{dR}(X,\, {\mathbb C})\, ,\ \
c\, \longmapsto\, \beta^*c\, ,
\end{equation}
where $\beta$ is diffeomorphism in \eqref{e3}.

For the isomorphism $\beta^*$ in \eqref{e9}, we have
$$
\beta^*(H^1(Y,\, {\mathbb Z}))\,=\, H^1(X,\, {\mathbb Z})\, .
$$
Furthermore, since the diffeomorphism $\beta$ is orientation preserving, it takes the natural symplectic 
pairing on $H^1_{dR}(Y,\, {\mathbb C})$ defined by
\begin{equation}\label{e10}
c_1\otimes c_2\, \longmapsto\, \int_Y c_1\wedge c_2\, \in\, {\mathbb C}
\end{equation}
to the corresponding symplectic pairing on $H^1_{dR}(X,\, {\mathbb C})$.

The Jacobian $J(Y)$ of $Y$ coincides with the following quotient:
$$
(H^1_{dR}(Y,\, {\mathbb C})/H^0(Y,\, \Omega^1_Y))/H^1(Y,\, {\mathbb Z})\, =\, J(Y)\, ,
$$
and the natural principal polarization on $J(Y)$ is constructed
using the pairing in \eqref{e10} and the complex structure of $H^1_{dR}(Y,\, {\mathbb C})$.
More precisely, the holomorphic tangent bundle $TJ(Y)$ of $J(Y)$ is the trivial
holomorphic vector bundle $$J(Y)\times \overline{H^0(Y,\, \Omega_Y^1)}\, \longrightarrow\, J(Y)$$
with fiber $\overline{H^0(Y,\, \Omega^1_Y)}\,=\, H^1(Y,\, {\mathcal O}_Y)$. The Hermitian
form on $\overline{H^0(Y,\, \Omega^1_Y)}$ defined by
$$
\overline{c}_1\otimes \overline{c}_2\, \longmapsto\, -\sqrt{-1}\int_Y \overline{c}_1\wedge c_2
\, \in\, {\mathbb C}\, , \ \ c_1,\, c_2\, \in\, H^0(Y,\, \Omega^1_Y)
$$
produces the canonical principal polarization on $J(Y)$.

We noted above that the $\mathbb C$--linear isomorphism $\beta^*$ in \eqref{e9} takes
$H^1(Y,\, {\mathbb Z})$ to isomorphically $H^1(X,\, {\mathbb Z})$ and
takes the symplectic pairing in \eqref{e10} to the corresponding pairing for $X$. Since
$$
\rho\,=\, \beta^*\, ,
$$
where $\rho$ is the homomorphism in \eqref{r} (and \eqref{e8}), from the commutativity of the
diagram in \eqref{e8} we now conclude that $J(Y)$ is isomorphic to the Jacobian
$J(X)$ of $X$ as a principally polarized abelian variety. Now from the standard
Torelli theorem (see \cite[Ch.~VI, \S~3, pp. 245--246]{ACGH}) we conclude
that $X$ is isomorphic to $Y$.
\end{proof}

\subsection{Neutral Tannakian category for a compact complex torus}

Let $\mathbb T$ be a compact complex torus of complex dimension $d$. The group scheme
$\varpi({\mathbb T},\, x_0)$ is abelian because $\pi_1({\mathbb T},\, x_0)$ is so.
Hence the quotient ${\Theta}(X,\, x_0)$ of $\varpi({\mathbb T},\, x_0)$ is also abelian. As in the proof
of Theorem \ref{thm1} we consider 
the corresponding additive character spaces $\mathbb X_a(\varpi({\mathbb T},\, x_0))$ and
$\mathbb X_a(\Theta ({\mathbb T},\, x_0))$ and their Lie algebras. From Lemma \ref{01.05.2020--1} we have
$$
\mathbb X_a( \varpi({\mathbb T},\, x_0) )\,=\, H^1_{dR}({\mathbb T},\, {\mathbb C})
$$
and
$$
\mathbb X_a(\Theta ({\mathbb T},\, x_0) )\,=\, H^0({\mathbb T},\, \Omega^1_{\mathbb T})\, ,
$$
where $\Omega^1_{\mathbb T}$ is the holomorphic cotangent bundle of $\mathbb T$. Consider the linear map 
\begin{equation}\label{psi}
\Psi\, :\, H^0({\mathbb T},\, \Omega^1_{\mathbb T})\,=\, \mathbb X_a( \Theta({\mathbb T},\, x_0) )
\, \longrightarrow\, \mathbb X_a(\varpi({\mathbb T},\, x_0) )\,=\,
H^1_{dR}({\mathbb T},\, {\mathbb C})
\end{equation}
induced by the homomorphism $\mathbf{q}_{\mathbb T}$ in \eqref{e6}. 
 We note that $\Psi$ coincides with the
natural inclusion of $H^0({\mathbb T},\, \Omega^1_{\mathbb T})$ in $H^1_{dR}({\mathbb T},\, {\mathbb C})$
given by the fact that any holomorphic $1$--form on $\mathbb T$ is actually closed.

Let ${\mathbb S}$ be a compact complex torus of complex dimension $d$, and let
$$
\varphi\, :\, {\mathbb T}\, \longrightarrow\, {\mathbb S}
$$
be a diffeomorphism. Let
\begin{equation}\label{vpl}
\varphi_\natural\, :\, \varpi({\mathbb T},\, x_0)\, \longrightarrow\,
\varpi({\mathbb S},\, \varphi(x_0))
\end{equation}
be the homomorphism corresponding to $\varphi$. We say that $\varphi_\natural$ descends to a
homomorphism from ${\Theta}({\mathbb T},\, x_0)$ to ${\Theta}({\mathbb S},\, \varphi(x_0))$ if there
is a homomorphism
$$
\varphi'_\natural\, :\, {\Theta}({\mathbb T},\, x_0) \, \longrightarrow\,{\Theta}({\mathbb S},\, \varphi(x_0))
$$
such that the diagram
\begin{equation}\label{da}
\begin{matrix}
\varpi({\mathbb T},\, x_0)& \stackrel{\varphi_\natural}{\longrightarrow}& \varpi({\mathbb S},\, \varphi(x_0))\\
\mathbf{q}_{\mathbb T} \Big\downarrow\,\,\,\,\,\,\, && \mathbf{q}_{\mathbb S}\Big\downarrow\,\,\,\,\,\,\, \\
{\Theta}({\mathbb T},\, x_0)&\stackrel{\varphi'_\natural}{\longrightarrow}& {\Theta}({\mathbb S},\,\varphi(x_0))
\end{matrix}
\end{equation}
is commutative, where $\mathbf{q}_{\mathbb T}$ and $\mathbf{q}_{\mathbb S}$ are the projections
in \eqref{e6}.

\begin{proposition}\label{prop2}
Let $\varphi\, :\, {\mathbb T}\, \longrightarrow\, {\mathbb S}$ be a diffeomorphism such that
the homomorphism $\varphi_\natural$ in \eqref{vpl} descends to a
homomorphism from ${\Theta}({\mathbb T},\, x_0)$ to ${\Theta}({\mathbb S},\, \varphi(x_0))$.
Then there is a biholomorphism
$$
\widetilde{\varphi}\, :\, {\mathbb T}\, \longrightarrow\, {\mathbb S}
$$
which is homotopic to the map $\varphi$.
\end{proposition}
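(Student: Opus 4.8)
The plan is to imitate the argument of Theorem \ref{thm1}, replacing the Torelli theorem for curves by its analogue for complex tori, which says that a complex torus is determined up to isomorphism (homotopic to a prescribed diffeomorphism) by the Hodge filtration on its first cohomology together with the integral lattice. First I would pass from the commutative diagram \eqref{da} to the induced commutative diagram of additive character spaces
\[
\begin{matrix}
\mathbb X_a(\Theta({\mathbb S},\,\varphi(x_0))) & \longrightarrow & \mathbb X_a(\Theta({\mathbb T},\,x_0))\\
\Psi_{\mathbb S}\Big\downarrow\,\,\,\,\,\,\, && \Psi_{\mathbb T}\Big\downarrow\,\,\,\,\,\,\,\\
\mathbb X_a(\varpi({\mathbb S},\,\varphi(x_0))) & \stackrel{\varrho}{\longrightarrow} & \mathbb X_a(\varpi({\mathbb T},\,x_0))
\end{matrix}
\]
and then use Lemma \ref{01.05.2020--1} to rewrite it as the square
\[
\begin{matrix}
H^0({\mathbb S},\,\Omega^1_{\mathbb S}) & \longrightarrow & H^0({\mathbb T},\,\Omega^1_{\mathbb T})\\
\Big\downarrow\,\,\,\,\,\,\, && \Big\downarrow\,\,\,\,\,\,\,\\
H^1_{dR}({\mathbb S},\,{\mathbb C}) & \stackrel{\varphi^*}{\longrightarrow} & H^1_{dR}({\mathbb T},\,{\mathbb C}),
\end{matrix}
\]
the vertical maps being the canonical inclusions of holomorphic $1$--forms (these are the maps $\Psi$ of \eqref{psi}, identified there with the natural inclusion), and the bottom map being the pullback $\varphi^*$ on de Rham cohomology; the identification $\varrho=\varphi^*$ is exactly as in the proof of Theorem \ref{thm1} and follows from functoriality of $\varpi$ and Lemma \ref{6.1}-style naturality already used there.

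Next I would extract the geometric consequence. Commutativity of the square says precisely that the isomorphism $\varphi^*\colon H^1_{dR}({\mathbb S},\,{\mathbb C})\to H^1_{dR}({\mathbb T},\,{\mathbb C})$ carries the subspace $H^0({\mathbb S},\,\Omega^1_{\mathbb S})$ into $H^0({\mathbb T},\,\Omega^1_{\mathbb T})$; since $\varphi$ is a diffeomorphism $\varphi^*$ is an isomorphism and both subspaces have the same dimension $d$, so in fact $\varphi^*\bigl(H^0({\mathbb S},\,\Omega^1_{\mathbb S})\bigr)=H^0({\mathbb T},\,\Omega^1_{\mathbb T})$. On the other hand $\varphi^*$ is induced by a diffeomorphism, hence it restricts to an isomorphism $H^1({\mathbb S},\,{\mathbb Z})\xrightarrow{\ \sim\ }H^1({\mathbb T},\,{\mathbb Z})$ of the integral lattices. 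Thus $\varphi^*$ is a lattice isomorphism respecting the Hodge decomposition of the first cohomology. Dualizing, and using $H_1({\mathbb T},\,{\mathbb Z})\cong\pi_1({\mathbb T},\,x_0)$ together with the description ${\mathbb T}\cong H^1({\mathbb T},\,{\mathcal O}_{\mathbb T})^{\vee}\!\big/\!\ker\ $ (equivalently ${\mathbb T}\cong V/\Lambda$ with $V=\overline{H^0({\mathbb T},\,\Omega^1_{\mathbb T})}$ and $\Lambda=H_1({\mathbb T},\,{\mathbb Z})$), one gets a ${\mathbb C}$--linear isomorphism $\overline{\varphi^*}^{\,\vee}\colon V_{\mathbb T}\to V_{\mathbb S}$ carrying $\Lambda_{\mathbb T}$ onto $\Lambda_{\mathbb S}$; this descends to a biholomorphism $\widetilde\varphi\colon{\mathbb T}\to{\mathbb S}$. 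Finally, because $\widetilde\varphi$ and $\varphi$ induce the same map on $\pi_1$ (both equal to the one determined by $\varphi^*$ on $H_1$), and tori are $K(\pi,1)$--spaces with abelian $\pi_1$, the two maps are homotopic.

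The step I expect to need the most care is the passage from "$\varphi^*$ is a Hodge-compatible lattice isomorphism" to the existence of the biholomorphism homotopic to $\varphi$: one must make sure the linear-algebra isomorphism one produces between the universal covers is complex-linear in the correct direction (it is the conjugate-dual of $\varphi^*$ that is complex linear, not $\varphi^*$ itself, since $\varphi^*$ need not respect the complex structure on all of $H^1_{dR}$, only the Hodge filtration), and then that the induced map on tori is homotopic, not merely equal on homology, to $\varphi$ — which is immediate here since both are maps between $K(\pi,1)$'s inducing the same homomorphism on $\pi_1$. Everything else is a verbatim transcription of the mechanism in the proof of Theorem \ref{thm1}, simply omitting the symplectic/polarization considerations, which are unnecessary because for a general complex torus there is no polarization to preserve and the Hodge structure on $H^1$ alone already recovers the torus.
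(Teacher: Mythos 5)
Your proposal is correct and follows essentially the same route as the paper's proof: pass from the diagram \eqref{da} to the square of additive character spaces, identify it via Lemma \ref{01.05.2020--1} with the inclusion of $H^0(\Omega^1)$ into $H^1_{dR}$ and the bottom map with $\varphi^*$, deduce that $\varphi^*$ preserves both the Hodge subspace and the integral lattice, and obtain $\widetilde\varphi$ by passing to the quotient $H^1_{dR}/H^{1,0}$ (the dual torus) and dualizing back. The paper phrases the last step as an isomorphism ${\mathbb S}^\vee\to{\mathbb T}^\vee$ followed by double duality, and notes $\widetilde\varphi^*=\varphi^*$ to get the homotopy, which is exactly your conjugate-dual construction and $K(\pi,1)$ argument.
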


\begin{proof}
Let $\varphi'_\natural\, :\, {\Theta}({\mathbb T},\, x_0) \, \longrightarrow\,{\Theta}({\mathbb S},\, \varphi(x_0))$
be the homomorphism such the diagram in \eqref{da} is commutative. Let
\begin{equation}\label{e8b}
\begin{matrix}
H^0({\mathbb S},\, \Omega^1_{\mathbb S}) & = &
\mathbb X_a( \Theta({\mathbb S},\, \varphi(x_0)) )& {\longrightarrow} &
\mathbb X_a( \Theta ({\mathbb T},\, x_0)) & = & H^0({\mathbb T},\, \Omega^1_{\mathbb T})\\
&& \xi^2\Big\downarrow\,\,\,\, &&\xi^1 \Big\downarrow\,\,\,\,\\
H^1_{dR}({\mathbb S},\,{\mathbb C}) & = &
\mathbb X_a( \varpi({\mathbb S},\, \varphi(x_0)) )& \stackrel{\delta}{\longrightarrow} &
\mathbb X_a( \varpi({\mathbb T},\, x_0) ) & = & H^1_{dR}({\mathbb T},\,{\mathbb C})
\end{matrix}
\end{equation}
be the commutative diagram of $\mathbb C$--linear maps corresponding to \eqref{da}.

Let
\begin{equation}\label{vps}
\varphi^*\, :\, H^1_{dR}({\mathbb S},\, {\mathbb C})\, \longrightarrow\, H^1_{dR}({\mathbb T},\, {\mathbb C}),\,
\ \ c\, \longmapsto\, \varphi^*c
\end{equation}
be the pullback map. The homomorphism $\delta$ in \eqref{e8b} coincides with the homomorphism $\varphi^*$
in \eqref{vps}. Therefore, from \eqref{e8b} we conclude that
\begin{equation}\label{e8c}
\varphi^* (H^0({\mathbb S},\, \Omega^1_{\mathbb S}))\,=\, H^0({\mathbb T},\, \Omega^1_{\mathbb T})\, .
\end{equation}
We also have
\begin{equation}\label{e8d}
\varphi^* (H^1({\mathbb S},\, {\mathbb Z}))\,=\, H^1({\mathbb T},\, {\mathbb Z})
\end{equation}
because $\varphi$ is a diffeomorphism.

We consider $\mathbb T$ (respectively, $\mathbb S$) as a complex abelian Lie group by taking
$x_0$ (respectively, $\varphi(x_0)$) to be the identity element of $\mathbb T$ (respectively,
$\mathbb S$). From \eqref{e8c} and \eqref{e8d} it follows immediately that the homomorphism
$\varphi^*$ in \eqref{vps} induces a holomorphic isomorphism
$$
\widetilde{\varphi}^\vee\, :\, {\mathbb S}^\vee \, :=\,(H^1_{dR}({\mathbb S},\,
{\mathbb C})/H^0({\mathbb S},\, \Omega^1_{\mathbb S}))/H^1({\mathbb S},\, {\mathbb Z})
$$
$$
\longrightarrow\, (H^1_{dR}({\mathbb T},\,
{\mathbb C})/H^0({\mathbb T},\, \Omega^1_{\mathbb T}))/H^1({\mathbb T},\, {\mathbb Z})\,=\, {\mathbb T}^\vee\, ;
$$
here ${\mathbb S}^\vee\,=\, \text{Pic}^0({\mathbb S})$ is the dual torus of $\mathbb S$, and
${\mathbb T}^\vee\,=\, \text{Pic}^0({\mathbb T})$ is the dual torus of $\mathbb T$. Let
$$
\widetilde{\varphi}\, :\, {\mathbb T}\,=\, ({\mathbb T}^\vee)^\vee \, \longrightarrow\,
({\mathbb S}^\vee)^\vee\,=\,{\mathbb S}
$$
be the dual of the above homomorphism $\widetilde{\varphi}^\vee$.

{}From the construction of the above homomorphism $\widetilde{\varphi}$ it is evident that
the pullback homomorphism
$$
\widetilde{\varphi}^*\, :\, H^1_{dR}({\mathbb S},\, {\mathbb C})\, \longrightarrow\, H^1_{dR}({\mathbb T},\, {\mathbb C}),\,
\ \ c\, \longmapsto\, \widetilde{\varphi}^*c
$$
coincides with the homomorphism $\varphi^*$ in \eqref{vps}. This implies that the
two maps $\widetilde{\varphi}$ and $\varphi$ are homotopic.
\end{proof}

\section{Completely decomposable vector bundles}\label{lastsec}

As before, $X$ is a compact connected K\"ahler manifold.
We define and study in this section a subcategory of ${\mathcal C}_{dR}(X)$ given by the following:

\begin{definition}
Let ${\mathcal D}_{dR}(X)$ be the full subcategory of ${\mathcal C}_{dR}(X)$ whose objects are
pairs $(E,\, D)$ satisfying the condition that the holomorphic vector bundle
$E$ is a direct sum of holomorphic line bundles.
\end{definition}

It is straightforward to check that ${\mathcal D}_{dR}(X)$ is stable under tensor products and duals. In order to 
prove that ${\mathcal D}_{dR}(X)$ is abelian we will need the next two lemmas.

\begin{lemma}\label{lem5a}
Let $L$ be a holomorphic line bundle on $X$. Then $L$ admits a holomorphic connection if and only
if $c_1(L)\,=\, 0$. If $L$ admits a holomorphic connection, then any holomorphic connection on $L$
is integrable.

Let $L_i$, $1\, \leq\, i\, \leq\, r$, be holomorphic line bundles on $X$ such
that the holomorphic vector bundle $\bigoplus_{i=1}^r L_i$ admits a holomorphic connection.
Then each $L_i$ admits a holomorphic connection; in other words, $c_1(L_i)\,=\, 0$.
\end{lemma}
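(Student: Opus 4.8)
The plan is to prove the three assertions in Lemma \ref{lem5a} in order, the first two being essentially the classical result of Atiyah and of Griffiths--Harris already invoked elsewhere in the paper, and the third being a simple descent argument from the direct sum to a summand.

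For the first assertion, I would recall the Atiyah class obstruction: a holomorphic vector bundle $E$ admits a holomorphic connection if and only if its Atiyah class $\mathrm{at}(E)\in H^1(X,\,\Omega^1_X\otimes\mathrm{End}(E))$ vanishes, and for a line bundle $L$ this class lives in $H^1(X,\,\Omega^1_X)$ and is (up to a universal nonzero scalar) the image of $L$ under the connecting map, i.e. it represents $c_1(L)$ in the Hodge-theoretic picture. Since $X$ is K\"ahler, $c_1(L)=0$ in $H^2_{dR}(X,\,\mathbb C)$ forces the $(1,1)$-component, which is exactly $\mathrm{at}(L)$ under the Hodge decomposition, to vanish; conversely if $\mathrm{at}(L)=0$ then a holomorphic connection exists and its curvature represents $c_1(L)$, so $c_1(L)=0$. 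This is precisely the content of \cite[pp.~192--193, Theorem 4]{At} together with \cite[p.~141, Proposition]{griffiths-harris78}, both already cited in the paper. For integrability: the curvature of any connection on a line bundle is a global holomorphic $2$-form (it lies in $H^0(X,\,\Omega^2_X)$ by type considerations once we know the $(0,1)$- and $(1,1)$-parts vanish), but on a line bundle the curvature is a closed $2$-form representing $c_1(L)=0$; more directly, a connection on a line bundle is automatically integrable because the curvature $D\circ D$ takes values in $\Omega^2_X\otimes\mathrm{End}(L)=\Omega^2_X$ and the obstruction to integrability for a line bundle — the $(2,0)$ part of the curvature — is a holomorphic $2$-form, and I would note that since $\mathrm{End}(L)$ is commutative the Bianchi-type argument shows the curvature is $\partial$-closed; the cleanest route is simply that any holomorphic connection on a line bundle $L$ with $c_1(L)=0$ on a compact K\"ahler manifold has curvature a holomorphic $2$-form whose class is $c_1(L)=0$, hence by the $\partial\bar\partial$-lemma it is zero. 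Actually, the simplest: the curvature of a holomorphic connection on a line bundle is a $(2,0)$-form that is automatically $\bar\partial$-closed (it is holomorphic) and the integrability condition \emph{is} the vanishing of this $(2,0)$-form; since $H^0(X,\,\Omega^2_X)\hookrightarrow H^2_{dR}(X,\,\mathbb C)$ on a K\"ahler manifold and the image is $c_1(L)=0$, it vanishes.

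For the second assertion — that a holomorphic connection on $\bigoplus_{i=1}^r L_i$ forces each $L_i$ to carry one — I would argue as follows. Composing the given connection $D$ with the projection $\mathrm{End}(\bigoplus L_i)\twoheadrightarrow \mathrm{End}(L_i)$ (diagonal block) does \emph{not} directly give a connection unless the $L_i$ are preserved; instead I would use Chern classes. Since $\bigoplus L_i$ admits a holomorphic connection, $c_1(\bigoplus L_i)=\sum c_1(L_i)=0$ by Theorem 4 of \cite{At}. But more is available: all Chern classes of a bundle admitting a holomorphic connection vanish (Atiyah--Bott), in particular the total Chern class $c(\bigoplus L_i)=\prod(1+c_1(L_i))=1$, which forces every elementary symmetric function of the $c_1(L_i)$ to vanish in $H^*_{dR}(X,\,\mathbb C)$. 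However, this alone need not force each individual $c_1(L_i)=0$. The correct argument: I would instead observe that the Atiyah class of $\bigoplus L_i$ vanishes, and the Atiyah class is functorial for direct sums — it is the block-diagonal sum of $\mathrm{at}(L_i)\in H^1(X,\,\Omega^1_X)$ inside $H^1(X,\,\Omega^1_X\otimes\mathrm{End}(\bigoplus L_i))$, and the diagonal embedding $\bigoplus_i H^1(X,\,\Omega^1_X)\hookrightarrow H^1(X,\,\Omega^1_X\otimes\mathrm{End}(\bigoplus L_i))$ is injective; hence $\mathrm{at}(\bigoplus L_i)=0$ implies each $\mathrm{at}(L_i)=0$, which by the first part means $c_1(L_i)=0$ and $L_i$ admits a (necessarily integrable) holomorphic connection. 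This functoriality of the Atiyah class under direct sums is the key input and is the one place I would need to be slightly careful; it is classical (Atiyah, \emph{Complex analytic connections in fibre bundles}) but I would spell out the cocycle description: choosing local holomorphic frames, the Atiyah cocycle of $\bigoplus L_i$ is block-diagonal with blocks the Atiyah cocycles of the $L_i$, and a cochain trivializing it must, after projecting to each diagonal block (which is a coherent operation since the blocks are not mixed by the coboundary), trivialize each $\mathrm{at}(L_i)$.

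The main obstacle, as indicated, is the second paragraph's functoriality point: one must be sure that vanishing of the Atiyah class of the sum really does pull back to vanishing of each summand's Atiyah class, rather than merely their sum or symmetric functions thereof. I expect this to go through cleanly via the explicit \v{C}ech cocycle in $\Omega^1_X\otimes\mathrm{End}(\bigoplus L_i)$ together with the ${\rm GL}$-equivariant (block-diagonal) splitting $\mathrm{End}(\bigoplus L_i)\supset\bigoplus_i\mathrm{End}(L_i)$, which admits an $\mathcal{O}_X$-linear retraction, so applying $H^1(X,\,\Omega^1_X\otimes-)$ of this retraction to the identity $\mathrm{at}(\bigoplus L_i)=0$ yields $\mathrm{at}(L_i)=0$ for each $i$. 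Everything else — the equivalence "holomorphic connection exists $\iff c_1=0$" for line bundles on K\"ahler manifolds, and automatic integrability — is exactly the already-cited combination of \cite[Theorem 4]{At} and \cite[p.~141]{griffiths-harris78}, so I would simply reference those and not reprove them.
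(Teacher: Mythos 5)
Your proof is correct, but it takes a genuinely different route for the third assertion, and along the way you dismiss — incorrectly — the argument the paper actually uses. The paper simply compresses the connection to the summand: the composite $L_j\,\hookrightarrow\,\bigoplus_i L_i\,\stackrel{D}{\longrightarrow}\,(\bigoplus_i L_i)\otimes\Omega^1_X\,\stackrel{p_j\otimes{\rm Id}}{\longrightarrow}\, L_j\otimes\Omega^1_X$ satisfies the Leibniz rule because $p_j\circ\iota_j\,=\,{\rm Id}_{L_j}$ kills the off-diagonal terms in $D(f\iota_j(s))\,=\,fD(\iota_j(s))+\iota_j(s)\otimes {\rm d}f$; so it \emph{is} a holomorphic connection on $L_j$, with no need for $L_j$ to be preserved by $D$ (a connection is a splitting of the Atiyah sequence, and splittings compress to direct summands). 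Your assertion that this ``does not directly give a connection unless the $L_i$ are preserved'' is therefore a misconception, but a harmless one: the replacement you give — $\mathrm{at}(\bigoplus_i L_i)$ is block diagonal with blocks $\mathrm{at}(L_i)$, and the ${\mathcal O}_X$-linear retraction of $\mathrm{End}(\bigoplus_i L_i)$ onto its $i$-th diagonal block sends the vanishing class $\mathrm{at}(\bigoplus_i L_i)$ to $\mathrm{at}(L_i)$ — is valid and is precisely the cohomological shadow of the same compression. You were also right to reject the Chern-class attempt, since vanishing of the elementary symmetric functions of the $c_1(L_i)$ does not force each $c_1(L_i)$ to vanish. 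For the first two assertions your route (Atiyah class as the $(1,1)$-component of $c_1$ plus Hodge theory for existence; the curvature of any holomorphic connection on $L$ is a holomorphic $2$-form representing $c_1(L)\,=\,0$, hence zero by injectivity of $H^0(X,\Omega^2_X)\,\longrightarrow\, H^2_{dR}(X,\mathbb C)$, for integrability) is sound and self-contained, whereas the paper quotes \cite{LT} to produce one flat unitary connection when $c_1(L)\,=\,0$ and then notes that any other holomorphic connection differs from it by a global holomorphic, hence closed, $1$-form. Your version avoids the Hermitian--Einstein input but leans on identifying the $(2,0)$-curvature of a holomorphic connection with a Chern--Weil representative of $c_1(L)$, which is the content of the proposition in \cite{griffiths-harris78} already cited in the paper; either way the K\"ahler hypothesis is what makes both steps work.
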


\begin{proof}
Since $X$ is K\"ahler, if $L$ admits a holomorphic connection, then $c_1(L)\,=\, 0$
\cite[pp.~192--193, Theorem 4]{At}. If $c_1(L)\,=\, 0$, then $L$ admits an integrable holomorphic
connection $\nabla^L$ whose monodromy lies in ${\rm U}(1)$ see \cite[Corollary 1.3.12, p.40]{LT}; note also that
any holomorphic line bundle admits
a Hermitian--Einstein metric \cite{UY}, \cite[p.~61, Theorem 3.0.1]{LT}.

If $L$ admits a (integrable) holomorphic connection $\nabla^L$, then any holomorphic connection on $L$ is of the form
$\nabla^L+\beta$, where $\beta\, \in\, H^0(X,\, \Omega^1_X)$. The curvature of
$\nabla^L+\beta$ is ${\rm d}\beta$, because $\nabla^L$ is integrable. Since any holomorphic $1$--form on $X$ is
closed, we conclude that $\nabla^L+\beta$ is integrable.

Let $p_j\, :\, \bigoplus_{i=1}^r L_i\, \longrightarrow\, L_j$ be the projection to the
$j$--th factor. Then for any holomorphic connection $D$ on $\bigoplus_{i=1}^r L_i$, the
composition of homomorphisms
$$
L_j\, \hookrightarrow\, \bigoplus_{i=1}^r L_i\, \stackrel{D}{\longrightarrow}\,
(\bigoplus_{i=1}^r L_i)\otimes\Omega^1_X\, \stackrel{p_j\otimes{\rm Id}_{\Omega^1_X}}{\longrightarrow}\,
L_j\otimes\Omega^1_X
$$
is a holomorphic connection on $L_j$.
\end{proof}

\begin{lemma}\label{5b}
Let $L_i$, $1\, \leq\, i\, \leq\, r$, and ${\mathcal L}_j$, $1\, \leq\, j\,\leq\, \ell$,
be holomorphic line bundles on $X$ such that $c_1(L_i)\,=\, 0\,=\, c_1({\mathcal L}_j)$
for all $i,\, j$. Let
$$
f\, :\, \bigoplus_{i=1}^r L_i\,\longrightarrow\, \bigoplus_{j=1}^\ell {\mathcal L}_j
$$
be a holomorphic homomorphism. Then the following two hold:
\begin{enumerate}
\item If ${\rm kernel}(f)\, \not=\, 0$, then ${\rm kernel}(f)$ is a direct sum
of holomorphic line bundles of vanishing first Chern class.

\item If ${\rm cokernel}(f)\, \not=\, 0$, then ${\rm cokernel}(f)$ is a direct sum
of holomorphic line bundles of vanishing first Chern class.
\end{enumerate}
\end{lemma}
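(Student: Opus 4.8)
The plan is to reduce both assertions to elementary linear algebra once $f$ has been put into the block form dictated by the isomorphism types of the line bundles involved; the only ingredient specific to the K\"ahler situation is a rigidity statement for homomorphisms between line bundles with vanishing first Chern class.

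First I would record the following observation: if $N$ is a holomorphic line bundle on $X$ with $c_1(N)=0$ and $\sigma\in H^0(X,N)$ is nonzero, then $\sigma$ is nowhere vanishing. Indeed, the zero divisor $Z:=\mathrm{div}(\sigma)$ is effective and $\mathcal O_X(Z)\cong N$, so $\int_Z\omega^{d-1}=(c_1(N)\cup\omega^{d-1})\cap[X]=\mathrm{degree}(N)=0$; but $\omega$ being K\"ahler forces $\int_Z\omega^{d-1}>0$ whenever $Z\neq 0$, whence $Z=0$ and $\sigma$ trivialises $N$. Combined with $H^0(X,\mathcal O_X)=\mathbb C$ (valid because $X$ is compact and connected), this shows that for holomorphic line bundles $M,M'$ with $c_1(M)=0=c_1(M')$ the space $\mathrm{Hom}(M,M')=H^0(X,M^\vee\otimes M')$ vanishes if $M\not\cong M'$ and is one-dimensional, spanned by an isomorphism, if $M\cong M'$.

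Next I would organise the data by isomorphism type. Let $M_1,\dots,M_s$ be the distinct isomorphism classes occurring among $L_1,\dots,L_r,\mathcal L_1,\dots,\mathcal L_\ell$, and put $a_k=\#\{i:L_i\cong M_k\}$ and $b_k=\#\{j:\mathcal L_j\cong M_k\}$. Choosing an isomorphism of each $L_i$ with the appropriate $M_k$, and likewise for each $\mathcal L_j$, identifies the source and target of $f$ with $\bigoplus_{k=1}^{s}M_k^{\oplus a_k}$ and $\bigoplus_{k=1}^{s}M_k^{\oplus b_k}$. By the previous paragraph there is no nonzero homomorphism $M_k\to M_{k'}$ when $k\neq k'$, so $f$ becomes block diagonal, $f=\bigoplus_{k=1}^{s}f_k$ with $f_k\colon M_k^{\oplus a_k}\to M_k^{\oplus b_k}$; and since $\mathrm{Hom}(M_k^{\oplus a_k},M_k^{\oplus b_k})\cong\mathrm{Mat}_{b_k\times a_k}(H^0(X,\mathcal O_X))=\mathrm{Mat}_{b_k\times a_k}(\mathbb C)$, we have $f_k=A_k\otimes\mathrm{Id}_{M_k}$ for some scalar matrix $A_k$.

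Finally, because the line bundle $M_k$ is locally free the functor $(-)\otimes M_k$ is exact, and kernels and cokernels commute with the finite direct sum over $k$; hence $\mathrm{kernel}(f)\cong\bigoplus_{k=1}^{s}M_k^{\oplus\dim\ker A_k}$ and $\mathrm{cokernel}(f)\cong\bigoplus_{k=1}^{s}M_k^{\oplus\dim\mathrm{coker}A_k}$. Each nonzero summand is a line bundle isomorphic to some $M_k$, and every $M_k$ has vanishing first Chern class, so both (1) and (2) follow at once (and, incidentally, $\mathrm{kernel}(f)$ and $\mathrm{cokernel}(f)$ are seen to be locally free, which a priori they need not be). I do not expect a serious obstacle here; the sole non-formal point is the nowhere-vanishing statement of the second paragraph, which is precisely where the K\"ahler hypothesis enters. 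As an alternative, one could observe that $\bigoplus_i L_i$ and $\bigoplus_j\mathcal L_j$, equipped with the zero Higgs field, are semisimple objects of the abelian category ${\mathcal C}_{Dol}(X)$ whose simple constituents are the $(L_i,0)$, respectively the $(\mathcal L_j,0)$, so that $\mathrm{kernel}(f)$, being a subobject, and $\mathrm{cokernel}(f)$, being a quotient, are again semisimple with simple constituents drawn from the same list; the explicit block decomposition above is, however, more self-contained.
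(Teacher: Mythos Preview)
Your argument is correct. The key rigidity step---that a nonzero section of a degree-zero line bundle on a compact K\"ahler manifold is nowhere vanishing---is sound, and once $\mathrm{Hom}(M,M')$ is known to be either $0$ or $\mathbb C\cdot(\text{isomorphism})$, the block-diagonalisation and the computation of kernel and cokernel as $\bigoplus_k M_k^{\oplus\dim\ker A_k}$ and $\bigoplus_k M_k^{\oplus\dim\operatorname{coker}A_k}$ are purely formal.

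The paper proceeds differently. It equips each $L_i$ and $\mathcal L_j$ with its (unique) flat unitary connection, observes that these are Hermitian--Einstein, and then invokes the general fact that any holomorphic map between Hermitian--Einstein bundles of the same slope is flat. This forces $f$ to intertwine the induced connections on source and target, so $\ker(f)$ and $\operatorname{image}(f)$ are preserved by the connections and hence are locally free with vanishing $c_1$. The conclusion is then drawn from polystability: $\bigoplus_i L_i$ is polystable of degree zero, so any degree-zero subbundle (respectively quotient) is again polystable, and its stable summands must be isomorphic to some $L_i$ (respectively $\mathcal L_j$).

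Your route is more elementary and self-contained: it avoids Hermitian--Einstein theory entirely and uses only the positivity of the K\"ahler form together with $H^0(X,\mathcal O_X)=\mathbb C$. The paper's route, on the other hand, fits into the Hermitian--Einstein/polystability framework used elsewhere in the article and would extend verbatim to direct sums of higher-rank stable bundles of degree zero, whereas your block-diagonalisation relies on the summands being line bundles (so that $\mathrm{End}$ of each is $\mathbb C$). The alternative you sketch at the end---viewing both sides as semisimple objects of $\mathcal C_{Dol}(X)$ with zero Higgs field---is essentially the paper's polystability argument phrased categorically.
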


\begin{proof}
Since $X$ is K\"ahler every holomorphic line bundle $L'$ on $X$ with $c_1(L')\,=\, 0$ admits a flat
unitary connection. In fact, there is a unique flat unitary connection on $L'$; the flat Hermitian structure
is unique up to a constant scalar multiplication.
Equip each $L_i$ (respectively, ${\mathcal L}_j$) with an integrable holomorphic connection
$\nabla^i$ (respectively, $\widetilde{\nabla}^j$) such that the monodromy lies in ${\rm U}(1)$.
Therefore, $\bigoplus_{i=1}^r \nabla^i$ is a flat unitary connection on $\bigoplus_{i=1}^r L_i$,
and $\bigoplus_{j=1}^\ell \widetilde{\nabla}^j$ is a flat unitary connection on 
$\bigoplus_{j=1}^\ell {\mathcal L}_j$. Since any flat unitary connection is a Hermitian--Einstein
connection with respect to any K\"ahler metric, $\bigoplus_{i=1}^r \nabla^i$ and
$\bigoplus_{j=1}^\ell \widetilde{\nabla}^j$ are Hermitian--Einstein
connections on $\bigoplus_{i=1}^r L_i$ and $\bigoplus_{j=1}^\ell {\mathcal L}_j$ respectively.

If $A$ and $B$ are holomorphic vector bundles on $X$ with $c_1(A)\,=\, 0\, =\, c_1(B)$, and 
are equipped with Hermitian--Einstein connections $\nabla^A$ and $\nabla^B$ respectively,
then every holomorphic homomorphism $f'\, :\, A\, \longrightarrow\, B$ is flat with respect to
the connection on the holomorphic vector bundle $\text{Hom}(A,\, B)$ induced by $\nabla^A$ and $\nabla^B$
(see \cite[p.~50, Theorem 2.2.1]{LT}). Therefore, we conclude that the
holomorphic homomorphism $f$ in the lemma intertwines the connections $\bigoplus_{i=1}^r\nabla^i$ 
on $\bigoplus_{i=1}^r L_i$ and $\bigoplus_{j=1}^\ell \widetilde{\nabla}^j$ on
$\bigoplus_{j=1}^\ell {\mathcal L}_j$ (equivalently, it is flat with respect to
the connection on $\text{Hom}(\bigoplus_{i=1}^r L_i,\, \bigoplus_{j=1}^\ell {\mathcal L}_j)$
induced by $\bigoplus_{i=1}^r \nabla^i$ and $\bigoplus_{j=1}^\ell \widetilde{\nabla}^j$). Consequently,
${\rm kernel}(f)$ is preserved by the connection $\bigoplus_{i=1}^r\nabla^i$
on $\bigoplus_{i=1}^r L_i$ and
${\rm image}(f)$ is preserved by the connection $\bigoplus_{j=1}^\ell \widetilde{\nabla}^j$
on $\bigoplus_{j=1}^\ell {\mathcal L}_j$. From Remark
\ref{remb} we know that both ${\rm kernel}(f)$ and ${\rm image}(f)$ are locally free.
Therefore, ${\rm kernel}(f)$ is a holomorphic subbundle of $\bigoplus_{i=1}^r L_i$,
and ${\rm cokernel}(f)$ is a quotient bundle of
$\bigoplus_{j=1}^\ell {\mathcal L}_j$. Note that ${\rm cokernel}(f)$ has an integrable holomorphic
connection induced by $\bigoplus_{j=1}^\ell \widetilde{\nabla}^j$, because
${\rm image}(f)$ is preserved by $\bigoplus_{j=1}^\ell \widetilde{\nabla}^j$.

Since ${\rm kernel}(f)$ and ${\rm cokernel}(f)$ both admit integrable holomorphic
connections, we have
$$
c_1({\rm kernel}(f))\,=\,0\, =\, c_1({\rm cokernel}(f))
$$
(see \cite[pp.~192--193, Theorem 4]{At}), and hence we conclude that
\begin{equation}\label{ed}
\text{degree}({\rm kernel}(f))\,=\,0\, =\, \text{degree}({\rm cokernel}(f))
\end{equation}
with respect to any K\"ahler form on $X$.

Any holomorphic line bundle on $X$ is stable. Since $\bigoplus_{i=1}^r L_i$ is a direct sum
of stable bundles of slope zero, we conclude that
$\bigoplus_{i=1}^r L_i$ is a polystable vector bundle. Any subbundle of degree zero of a polystable vector
bundle of degree zero is again polystable. Therefore, from \eqref{ed} we conclude that
${\rm kernel}(f)$ is a polystable vector bundle of degree zero, if
${\rm kernel}(f)$ is nonzero. Assume that ${\rm kernel}(f)$ is nonzero, so
${\rm kernel}(f)$ is a polystable vector bundle of degree zero.
Hence ${\rm kernel}(f)$ is a direct sum
of stable vector bundle of degree zero. If $S$ is a stable subbundle of ${\rm kernel}(f)$ of degree zero,
then $S$ is also a stable subbundle of $\bigoplus_{i=1}^r L_i$ of degree zero, because ${\rm kernel}(f)$
is a subbundle of $\bigoplus_{i=1}^r L_i$. But any stable subbundle of $\bigoplus_{i=1}^r L_i$ of degree zero
is holomorphically isomorphic to $L_i$ for some $1\, \leq\, i\, \leq\, r$. Therefore, we conclude that
${\rm kernel}(f)$ is a direct sum
of holomorphic line bundles of vanishing first Chern class, if ${\rm kernel}(f)\, \not=\, 0$.

Next we note that $\bigoplus_{j=1}^\ell {\mathcal L}_j$ is a polystable vector bundle of degree zero,
and ${\rm cokernel}(f)$ is a quotient bundle of $\bigoplus_{j=1}^\ell {\mathcal L}_j$ of degree zero.
Hence ${\rm cokernel}(f)$ is a polystable vector bundle of degree zero, if
${\rm cokernel}(f)$ is nonzero. Assume that ${\rm cokernel}(f)$ is nonzero. So
${\rm cokernel}(f)$ is a polystable vector bundle of degree zero. Therefore,
${\rm cokernel}(f)$ is a direct sum of stable vector bundles of degree zero.
Any stable quotient of
${\rm cokernel}(f)$ of degree zero is also a stable quotient of
$\bigoplus_{j=1}^\ell {\mathcal L}_j$ of degree zero, because ${\rm cokernel}(f)$ is a quotient 
of $\bigoplus_{j=1}^\ell {\mathcal L}_j$. But any stable quotient of
$\bigoplus_{j=1}^\ell {\mathcal L}_j$ of degree zero is holomorphically isomorphic to ${\mathcal L}_j$
for some $1\, \leq\, j\, \leq\, \ell$. Therefore, if ${\rm cokernel}(f)\, \not=\, 0$, then
${\rm cokernel}(f)$ is a direct sum of holomorphic line bundles of vanishing first Chern class.
\end{proof}

Using Lemma \ref{5b} it can be deduced that the category ${\mathcal D}_{dR}(X)$ is abelian.

Fix a point $x_0\, \in\, X$. Equip the category ${\mathcal D}_{dR}(X)$ with the exact
functor to the category of finite dimensional complex vector spaces defined by
$(E,\,D)\,\longmapsto\, E|_{x_0}$. Let
\begin{equation}\label{e2l}
{\Delta}(X,\, x_0)
\end{equation}
be the affine group scheme over $\mathbb C$ corresponding
to ${\mathcal D}_{dR}(X)$ \cite[p.~130, Theorem 2.11]{DMOS}.
Note that $\Delta(X,\,x_0)$ is the target of a morphism of affine group schemes 
\begin{equation}\label{e6l}
\varpi(X,\, x_0)\, \longrightarrow\, {\Delta}(X,\, x_0)\, ,
\end{equation}
where $\varpi(X,\, x_0)$ is the group scheme in \eqref{e5}.
According to the standard criterion \cite[Proposition 2.21, p.~139]{DMOS}, amplified
by \cite[Lemma 2.1]{biswas-hai-dos_santos19}, using Lemma \ref{5b} it follows that
the homomorphism in \eqref{e6l} is a quotient morphism.

The homomorphism $ \mathbf{q}_X$ in (\ref{e6}) factors through the quotient ${\Delta}(X,\, x_0)$ and gives a 
quotient homomorphism
\begin{equation}\label{e6m}
\mathbf{p}_X\, :\, {\Delta}(X,\, x_0)\, \longrightarrow\, {\Theta}(X,\, x_0)\,.
\end{equation}

For a pointed compact K\"ahler manifold $(Y,\,y_0)$ it can be shown that the natural
homomorphism
$$
\Delta(X,\,x_0)\times\Delta(Y,y_0)\,\longrightarrow
\,\Delta(X\times Y,\, (x_0,\,y_0))
$$
is an isomorphism; it's proof is similar to that of Proposition \ref{prop1}.

The homomorphism of additive character $\mathbb C$--vector spaces
$$
\mathbb X_a({{\Theta}(X,\, x_0)})\, \longrightarrow\, \mathbb X_a({{\Delta}(X,\, x_0)})
$$
induced by $\mathbf{p}_X$ in \eqref{e6m} is an isomorphism as follows from Lemma \ref{5b} and the interpretation of these vector spaces as extension groups (see Section \ref{01.05.2020--2}). In view of this,
examining the proofs of Theorem \ref{thm1} and Proposition \ref{prop2} we
conclude that these two results remain valid if ${\Theta}(X,\, x_0)$ is replaced
by the group scheme ${\Delta}(X,\, x_0)$ in \eqref{e2l}.

\section{Connections on pseudostable vector bundles}\label{07.05.2020--1}

We shall now relate the group scheme $\Theta(X,\, x_0)$ of Section 
\ref{03.05.2020--4} to another group scheme which deals only with a full subcategory of vector bundles.

\begin{definition}\label{03.05.2020--3}
A holomorphic vector bundle $E$ over $X$ is said to be pseudostable (see \cite{BG}) if the Higgs vector
bundle $(E,\,0)$ is an object of the category ${\mathcal C}_{Dol}(X)$ introduced in Section \ref{se2}. In other
words, $E$ is pseudostable if and only if 
\begin{itemize}
\item ${\rm degree}(E)\,=\, 0$,

\item$ch_2(E)\wedge\omega^{d-2}\,=\,0$, and 

\item there exists a filtration of $E$ by holomorphic {\it subbundles}
\[
0\,=\,E_0\,\subset\, E_1\, \subset\, \cdots \,\subset\, E_{\ell-1}\, \subset\, E_\ell\,=\,E
\] 
in which every $E_j/E_{j-1}$, $1\leq\, j\, \leq\, \ell$, is stable and of degree zero.
\end{itemize}
(Actually the third condition implies the first condition.)
The full subcategory of pseudostable vector bundles on $X$ will be denoted by ${\rm Vect}^{\rm s}_0(X)$.
\end{definition}

The following theorem, which is based on the works of Simpson and Corlette,
is proved in \cite{Si2} (see \cite[pp.~35--37, (3.4.1)--(3.4.5)]{Si2} and
\cite[p.~70]{Si2}).

\begin{theorem}[{\cite[(3.4.1)--(3.4.5)]{Si2}}]\label{04.05.2020--3}\mbox{}
The following three statements hold.
\begin{enumerate}
\item The category ${\rm Vect}^{\rm s}_0(X)$ is abelian and stable under the tensor product of vector bundles.

\item Any $E\,\in\,{\rm Vect}_0^{\rm s}(X)$ carries a canonical holomorphic integrable connection. 

\item If $X$ is projective, and the cohomology class of $\omega$ is rational,
then ${\rm Vect}^{\rm s}_0(X)$ coincides with the category of all semistable 
vector bundles $E$ such that $$ch_1(E)\wedge\omega^{d-1}\,=\,ch_2(E)\wedge\omega^{d-2}\,=\,0$$ (see
\cite[p.~39, Theorem 2]{Si2} for it).
\end{enumerate}
\end{theorem}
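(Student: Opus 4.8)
The three assertions are all contained in Simpson's work \cite{Si2}; the plan is to indicate how each is extracted from the structures recalled in Section~\ref{se2}. The organising observation is that the functor $E\,\longmapsto\,(E,\,0)$ identifies ${\rm Vect}^{\rm s}_0(X)$ with the full subcategory ${\mathcal C}^{0}_{Dol}(X)\,\subseteq\,{\mathcal C}_{Dol}(X)$ whose objects are the Higgs bundles with vanishing Higgs field. This functor is faithful and also full, because any holomorphic homomorphism $E\,\longrightarrow\, E'$ trivially intertwines the zero Higgs fields; and its essential image is all of ${\mathcal C}^{0}_{Dol}(X)$, since by Definition~\ref{03.05.2020--3} a bundle $E$ is pseudostable exactly when $(E,\,0)$ is an object of ${\mathcal C}_{Dol}(X)$.

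For assertion~(1) I would check that ${\mathcal C}^{0}_{Dol}(X)$ is closed, inside the rigid abelian tensor category ${\mathcal C}_{Dol}(X)$, under kernels, cokernels and tensor products. Given a morphism $f\,:\,(E,\,0)\,\longrightarrow\,(E',\,0)$, its kernel and cokernel in ${\mathcal C}_{Dol}(X)$ are objects whose underlying Higgs fields are induced, by restriction and by passage to the quotient, from the zero Higgs field, hence vanish; and the Higgs field of $(E,\,0)\otimes(E',\,0)$ is $0\otimes{\rm id}+{\rm id}\otimes 0\,=\,0$. That the bundles underlying these kernels and cokernels are again locally free and pseudostable is part of the statement, recalled in Section~\ref{se2}, that ${\mathcal C}_{Dol}(X)$ is abelian. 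Carrying this back through the equivalence with ${\rm Vect}^{\rm s}_0(X)$ shows that the latter is abelian and stable under tensor products.

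For assertion~(2), fix $E\,\in\,{\rm Vect}^{\rm s}_0(X)$ and let $(W,\,\nabla)\,\in\,{\mathcal C}_{dR}(X)$ be the integrable connection attached to $(E,\,0)$ by the equivalence ${\mathcal C}_{Dol}(X)\,\simeq\,{\mathcal C}_{dR}(X)$ of Section~\ref{se2}. The content is that $W$ is isomorphic to $E$ as a holomorphic bundle, so that $\nabla$ becomes a holomorphic integrable connection carried by $E$ itself. I would prove this using the harmonic-metric description of Simpson's functor (relying on \cite{Co} and \cite{Si2}): when the Higgs field vanishes the de Rham connection produced from a harmonic metric $h$ is just the Chern connection $\partial_h+\overline\partial_E$, which keeps the holomorphic structure $\overline\partial_E$ fixed, and on a stable degree-zero summand with $ch_2\cup\omega^{d-2}\,=\,0$ this Chern connection is flat; one then ascends the defining filtration of $E$ by successive extensions, matching the relevant extension groups of integrable connections with those of holomorphic bundles, to conclude $W\,\simeq\, E$, the naturality of the harmonic metric yielding the word ``canonical''. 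This is the step I expect to be the genuine obstacle: unlike the purely Tannakian bookkeeping that suffices for~(1) and~(3), it requires the analytic input of \cite{Si2} (see \cite[pp.~35--37, (3.4.1)--(3.4.5)]{Si2}), which is where this assertion is actually established.

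Assertion~(3) follows by intersecting with the locus $\theta\,=\,0$ the equality, quoted in Section~\ref{se2} from \cite[p.~39, Theorem~2]{Si2}, between ${\mathcal C}_{Dol}(X)$ and the category of semistable Higgs bundles $(E,\,\theta)$ with $ch_2(E)\cup\omega^{d-2}\,=\,0$ and $\mathrm{degree}(E)\,=\,0$ (valid when $X$ is projective and $[\omega]$ is rational). The left-hand intersection is ${\rm Vect}^{\rm s}_0(X)$ by Definition~\ref{03.05.2020--3}, while on the right a Higgs bundle $(E,\,0)$ is semistable exactly when the vector bundle $E$ is, and the two numerical conditions there amount to $ch_1(E)\wedge\omega^{d-1}\,=\,ch_2(E)\wedge\omega^{d-2}\,=\,0$; hence the two categories coincide.
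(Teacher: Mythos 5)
Your proposal is correct and follows essentially the same route as the paper, which offers no proof of this theorem beyond the citation of Simpson's \cite[(3.4.1)--(3.4.5)]{Si2}: your reduction of (1) and (3) to the $\theta=0$ slice of ${\mathcal C}_{Dol}(X)$ matches the paper's Definition \ref{03.05.2020--3} and its Section \ref{se2} discussion of \cite[p.~39, Theorem 2]{Si2}, and your treatment of (2) is exactly the observation recorded in Remark \ref{rem1} (vanishing Higgs field means the holomorphic structures of $E$ and the associated flat bundle coincide, so the de Rham connection is carried by $E$ itself). You also correctly isolate (2) as the one point where the analytic input of Corlette--Simpson is indispensable rather than Tannakian bookkeeping.
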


\begin{remark}\label{rem1}
As mentioned earlier in Section \ref{se2}, the category ${\mathcal C}_{Dol}(X)$ is equivalent to
${\mathcal C}_{dR}(X)$ \cite[p.~36, Lemma 3.5]{Si2}. Take $(E,\,\theta)\, \in\, {\mathcal C}_{Dol}(X)$,
and let $(V,\, \nabla)\, \in\, {\mathcal C}_{dR}(X)$ be the object corresponding to $(E,\,\theta)$ by the
equivalence of categories between ${\mathcal C}_{Dol}(X)$ and ${\mathcal C}_{dR}(X)$. Although the $C^\infty$ vector
bundles underlying $E$ and $V$ coincide, their holomorphic structure do not coincide in general. However,
if $\theta\,=\, 0$, then the holomorphic structures of $E$ and $V$ coincide. Therefore, $\nabla$ is
a holomorphic integrable connection on $V\,=\, E$. We note that if $E$ is polystable, then this holomorphic
connection $\nabla$ on $E$ coincides with the unique holomorphic integrable connection on $E$ whose
monodromy is unitary. See \cite[p.~4004, Theorem 3.1]{BS} for a generalization
of this canonical connection on $E\, \in\,{\rm Vect}^{\rm s}_0(X)$ to the principal bundles over $X$.
\end{remark}

\begin{remark}\label{rem2}
Take $E_1,\, E_2\, \in\, {\rm Vect}^{\rm s}_0(X)$. The canonical holomorphic integrable connections
on $E_1$ and $E_2$ (see Remark \ref{rem1}) will be denoted by $\nabla^1$ and $\nabla^2$ respectively. Let
$$
\Phi\, :\, E_1\, \longrightarrow\, E_2
$$
be a homomorphism of coherent analytic sheaves. Then $\Phi$ is a homomorphism
from $(E_1,\, 0)\, \in\, {\mathcal C}_{Dol}(X)$ to $(E_2,\, 0)\, \in\, {\mathcal C}_{Dol}(X)$.
Therefore, from the equivalence of categories between ${\mathcal C}_{Dol}(X)$ and
${\mathcal C}_{dR}(X)$
(\cite[p.~36, Lemma 3.5]{Si2}) we conclude that $\Phi$ is a homomorphism from
$(E_1, \, \nabla^1)\, \in\, {\mathcal C}_{dR}(X)$ to $(E_2, \, \nabla^2)\, \in\, {\mathcal C}_{dR}(X)$.
In particular, $\Phi(E_1)\, \subset\, E_2$ is preserved by the
integrable holomorphic connection $\nabla^2$ on $E_2$. Now
setting $E_1$ to be the trivial line bundle ${\mathcal O}_X$ we conclude that every holomorphic section
$s\, \in\, H^0(X,\, E_2)$ is flat (same as integrable) with respect to the
integrable holomorphic connection $\nabla^2$ on $E_2$.
It also follows that the canonical connection on $E_1\oplus E_2$ coincides with $\nabla^1\oplus \nabla^2$.
Moreover, the canonical connection on $E_1\otimes E_2$ (respectively, $\text{Hom}(E_1, \, E_2)$)
coincides with the connection on $E_1\otimes E_2$ (respectively, $\text{Hom}(E_1, \, E_2)$) induced
by $\nabla^1$ and $\nabla^2$. Also, the canonical connection on the dual vector bundle $E^*_1$ coincides
with the one induced by the connection $\nabla^1$ on $E_1$.
These properties of the canonical connection were crucial in the
proofs of \cite[p.~4004, Theorem 3.1]{BS} and \cite[p.~20, Theorem 1.1]{BG}.
\end{remark}

It then follows that there exists an affine group scheme $\pi^{\rm S}(X,\,x_0)$ over
$\mathbb C$ such that the functor 
\[
\bullet|_{x_0} \,:\, \mathrm{Vect}_0^{\rm s}(X)\,\longrightarrow\, \mathrm{Vect}_{\mathbb C}
\]
induces an equivalence between $ \mathrm{Vect}_0^{\rm s}(X)$ and the
category of finite dimensional algebraic representations of $\pi^{\rm S}(X,\, x_0)$. 

Following the path taken in the previous sections, we define ${\mathcal S}_{dR}(X)$ as the full
subcategory of ${\mathcal C}_{dR}(X)$ consisting of those $(E,\,\nabla)$ such that $E$ belongs to
${\rm Vect}_0^{\rm s}(X)$. This produces another group scheme
\begin{equation}\label{03.05.2020--5}
\Sigma(X,\,x_0)
\end{equation}
whose category of representations is, via the functor $\bullet|_{x_0}$, simply $\mathcal S_{dR}(X)$.

Using the forgetful functor
\[
U\,:\, \mathcal S_{dR}(X)\, \longrightarrow\, {\rm Vect}_0^{\rm s}(X)
\] 
and the natural inclusion
\[
J\,:\,\mathcal T_{dR}(X)\,\longrightarrow\,{\mathcal S}_{dR}(X),
\] 
we arrive at morphisms
\begin{equation}\label{03.05.2020--6}
\pi^{\rm S}(X,\,x_0)\,\stackrel{\mathbf u_X}{\longrightarrow}\,\Sigma(X,\,x_0)\,
\stackrel{\mathbf j_X}{\longrightarrow}\, \Theta(X,\, x_0)\, .
\end{equation}
Clearly, $\mathbf j_X\circ\mathbf u_X$ is the trivial homomorphism.

We shall now explore some properties of ${\mathbf u}_X$. For that, we make a group theoretical interlude. No 
particular property concerning the ground field $\mathbb C$ is required so, in what follows, an ``affine group 
scheme'' should be interpreted as an ``affine group scheme over a unspecified field.''

From the existence of limits in the category of affine group schemes, we can make: 

\begin{definition}
Let $G$ be an affine group scheme and $H\,\subset\, G$ a closed subgroup scheme. The normal closure of $H$ inside
$G$ is the intersection of all normal and closed subgroup schemes of $G$ containing $H$.
\end{definition}

Here are two simple features of the normal closure. The reader unfamiliar with the construction of arbitrary 
quotients in the theory of group schemes should consult chapters 15 and 16 of \cite{waterhouse79}. In particular, 
the verification of the following result is immediate once 15.4 and 16.3 in \cite{waterhouse79} are understood.

\begin{lemma}\label{03.09.2020--1}
Let $G$ be an affine group scheme, $H\,\subset\, G$ a closed subgroup scheme and $N$ the normal closure of $H$ in $G$.
\begin{enumerate}
\item
For each quotient morphism $q\,:\,G\,\longrightarrow\, Q$ of group schemes such that $q|_H$ is the trivial
morphism, there exists
a factorization 
\[
\xymatrix{
G\ar[r]^q\ar[d]& Q
\\
G/N\ar[ru]_{\overline q}.
}
\]

\item Let $D\,\subset\, G$ be a closed and normal subgroup scheme containing $H$. Suppose that $D$ enjoys
the property attributed to $N$ in the previous item. Then $D\,=\,N$.
\end{enumerate}
\end{lemma}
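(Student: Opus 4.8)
The plan is to derive both items formally from the universal properties of quotients, invoking only the facts recalled in \cite[15.4]{waterhouse79} and \cite[16.3, Theorem]{waterhouse79}: the kernel of any homomorphism of affine group schemes is a closed normal subgroup scheme, and for a closed normal subgroup scheme $M\,\subset\, G$ the quotient $G/M$ exists, the projection $\pi_M\,:\,G\,\longrightarrow\, G/M$ is a quotient morphism with kernel $M$, and every homomorphism out of $G$ whose restriction to $M$ is trivial factors uniquely through $\pi_M$. I would first record that the intersection $N$ defining the normal closure is itself a closed normal subgroup scheme of $G$ containing $H$ — arbitrary intersections of closed subgroup schemes are closed, an intersection of normal ones is normal, and $G$ itself is a member of the family — so that $\pi_N\,:\,G\,\longrightarrow\, G/N$ is available.

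For item (1): given a quotient morphism $q\,:\,G\,\longrightarrow\, Q$ with $q|_H$ trivial, the subgroup scheme $\ker q$ is closed and normal in $G$, and triviality of $q|_H$ says exactly that $H\,\subset\,\ker q$. By the definition of $N$ as the smallest such subgroup scheme, $N\,\subset\,\ker q$, so $q$ is trivial on $N$; the universal property of $\pi_N$ then yields the factorization $\overline q\,:\,G/N\,\longrightarrow\, Q$ with $\overline q\circ\pi_N\,=\,q$.

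For item (2): suppose $D\,\subset\, G$ is closed, normal, contains $H$, and enjoys the property just proved for $N$. Since $D$ is closed and normal, $\pi_D\,:\,G\,\longrightarrow\, G/D$ is a quotient morphism with kernel $D$, and $H\,\subset\, D$ forces $\pi_D|_H$ to be trivial. Applying item (1) (valid for $N$) to $q\,=\,\pi_D$ produces a factorization of $\pi_D$ through $\pi_N$, so $N\,\subset\,\ker\pi_D\,=\,D$. Symmetrically, $\pi_N\,:\,G\,\longrightarrow\, G/N$ is a quotient morphism with $\pi_N|_H$ trivial (because $H\,\subset\, N$), so the property assumed of $D$, applied to $q\,=\,\pi_N$, produces a factorization of $\pi_N$ through $\pi_D$, whence $D\,=\,\ker\pi_D\,\subset\,\ker\pi_N\,=\,N$. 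The two inclusions give $D\,=\,N$.

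There is no genuine obstacle here: the argument is a diagram chase with universal properties. The only point deserving care — and the reason the lemma is isolated — is that these universal properties, together with the existence of the normal closure as an honest closed normal subgroup scheme, must be used in the full generality of affine (pro-algebraic, possibly non-reduced) group schemes rather than merely for algebraic groups over a field; this is exactly what Chapters 15 and 16 of \cite{waterhouse79} supply, so everything else is bookkeeping.
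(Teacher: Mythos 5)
Your proof is correct and is precisely the argument the paper has in mind: the paper gives no written proof, asserting only that the lemma is immediate from \cite[15.4, 16.3]{waterhouse79}, and your write-up is exactly that verification (the normal closure is a closed normal subgroup scheme containing $H$, item (1) follows since $N\subset\ker q$, and item (2) follows by applying the universal property in both directions and comparing kernels).
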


\begin{theorem}\label{04.05.2020--5}
The following three statements hold.
\begin{enumerate}
\item In \eqref{03.05.2020--6},
the morphism $\mathbf{u}_X$ is a closed immersion, and $\mathbf j_X$ is a quotient map. 

\item The normal closure of ${\rm Im}(\mathbf{u}_X)$ inside $\Sigma(X,\,x_0)$ is ${\rm Ker}(\mathbf j_X)$. 

\item The morphism $\mathbf u_X$ possesses a left inverse
\[
{\mathbf v}_X\,:\,\Sigma(X,\,x_0)\,\longrightarrow\,\pi^{\rm S}(X,\,x_0)\, .
\]
\end{enumerate}
\end{theorem}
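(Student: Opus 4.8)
The plan is to exploit the Tannakian dictionary throughout, translating each of the three assertions into a statement about the exact tensor functors
\[
J\,:\,\mathcal T_{dR}(X)\,\longrightarrow\,\mathcal S_{dR}(X)
\qquad\text{and}\qquad
U\,:\,\mathcal S_{dR}(X)\,\longrightarrow\,{\rm Vect}_0^{\rm s}(X),
\]
and then invoking the standard criteria of \cite[Proposition~2.21]{DMOS} (as amplified by \cite[Lemma~2.1]{biswas-hai-dos_santos19}). For the first assertion, that $\mathbf u_X$ is a closed immersion amounts to checking that every object of ${\rm Vect}_0^{\rm s}(X)$ is a subquotient of (the underlying bundle of) some object of $\mathcal S_{dR}(X)$; but this is immediate, since if $E\,\in\,{\rm Vect}_0^{\rm s}(X)$ then $(E,\,\nabla)\,\in\,\mathcal S_{dR}(X)$ for $\nabla$ the canonical connection of Theorem \ref{04.05.2020--3}(2), and $U(E,\,\nabla)\,=\,E$. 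That $\mathbf j_X$ is a quotient map amounts to checking that $\mathcal T_{dR}(X)$ is a full subcategory of $\mathcal S_{dR}(X)$ closed under subquotients, which follows from the Corollary establishing that $\mathcal T_{dR}(X)$ is an abelian subcategory of $\mathcal C_{dR}(X)$ (hence of $\mathcal S_{dR}(X)$), together with the fact — already used there — that a subquotient in $\mathcal C_{dR}(X)$ of a trivial bundle is again trivial.

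For the second assertion I would apply Lemma \ref{03.09.2020--1}(2): it suffices to show that ${\rm Ker}(\mathbf j_X)$ is closed and normal (clear, being the kernel of a quotient morphism), contains ${\rm Im}(\mathbf u_X)$ (which is exactly the statement that $\mathbf j_X\circ\mathbf u_X$ is trivial, already observed in the text), and enjoys the universal property of Lemma \ref{03.09.2020--1}(1). The last point is the categorical heart of the argument: a quotient morphism $q\,:\,\Sigma(X,\,x_0)\,\longrightarrow\, Q$ with $q\circ\mathbf u_X$ trivial corresponds, via Tannakian duality, to a tensor subcategory $\mathcal C_q\,\subset\,\mathcal S_{dR}(X)$, and the triviality of $q\circ\mathbf u_X$ means that the restriction of the forgetful functor $U$ to $\mathcal C_q$ lands in the full subcategory of $\emph{trivial}$ bundles. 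Thus every $(E,\,\nabla)\,\in\,\mathcal C_q$ has $E$ holomorphically trivial, i.e. $\mathcal C_q\,\subset\,\mathcal T_{dR}(X)$, which is precisely a factorization of $q$ through $\mathbf j_X$. One must be slightly careful here: "the restriction of $U$ lands in trivial bundles" should be extracted from the statement that $\mathbf u_X^*$ kills every representation coming from $Q$; concretely, for $E\,\in\,\mathcal C_q$ one has $H^0(X,\,E)\,=\,E|_{x_0}$ as $\pi^{\rm S}$-invariants match $Q$-invariants, and since $E\,\in\,{\rm Vect}_0^{\rm s}(X)$ this forces $E\,\simeq\,\mathcal O_X^{\oplus r}$ by the usual evaluation-of-sections argument. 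This is the step I expect to be the main obstacle, and it is where the specific structure of ${\rm Vect}_0^{\rm s}(X)$ (every global section is flat for the canonical connection, Remark \ref{rem2}) really enters.

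For the third assertion, a left inverse $\mathbf v_X$ to $\mathbf u_X$ corresponds, Tannakianly, to a tensor functor $\mathcal R\,:\,{\rm Vect}_0^{\rm s}(X)\,\longrightarrow\,\mathcal S_{dR}(X)$ that splits the forgetful functor $U$, i.e. $U\circ\mathcal R\,\simeq\,{\rm id}$. The natural candidate is the functor
\[
\mathcal R\,:\,E\,\longmapsto\,(E,\,\nabla_E)
\]
sending a pseudostable bundle to itself equipped with its canonical integrable connection of Theorem \ref{04.05.2020--3}(2). The content to verify is that $\mathcal R$ is a functor compatible with tensor products, duals, and the fiber functors: functoriality (a morphism of pseudostable bundles is horizontal for the canonical connections) and the tensor compatibilities are exactly the properties of the canonical connection recorded in Remark \ref{rem2}, namely that it is additive, multiplicative, and compatible with internal Hom and duals, and that it restricts to the de Rham differential on $\mathcal O_X$. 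Compatibility with the fiber functor $\bullet|_{x_0}$ is tautological since $\mathcal R$ does not change the underlying bundle. Since $U\circ\mathcal R$ is visibly the identity on ${\rm Vect}_0^{\rm s}(X)$, the induced morphism $\mathbf v_X\,:\,\Sigma(X,\,x_0)\,\longrightarrow\,\pi^{\rm S}(X,\,x_0)$ satisfies $\mathbf u_X\circ\mathbf v_X\,=\,{\rm id}$ — wait, one must check the direction: $\mathcal R$ is a functor ${\rm Vect}_0^{\rm s}(X)\to\mathcal S_{dR}(X)$ splitting $U$, so on group schemes it induces $\pi^{\rm S}(X,\,x_0)\twoheadleftarrow\Sigma(X,\,x_0)$ composing correctly with $\mathbf u_X$ to the identity on $\Sigma(X,\,x_0)$; the only subtlety is bookkeeping of variance, which the naturality of Tannakian duality handles automatically. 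I anticipate no real difficulty in the third part beyond this careful variance check, the substantive input being entirely contained in Remark \ref{rem2}.
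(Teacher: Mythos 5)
Your proposal is correct and follows essentially the same route as the paper: part (1) via the standard Tannakian criteria (the paper obtains the quotient property of $\mathbf j_X$ even more quickly by noting that $\mathbf q_X$ factors through it), part (2) by verifying the universal property of Lemma \ref{03.09.2020--1} exactly as the paper does (translating a quotient $q$ of $\Sigma(X,\,x_0)$ killing ${\rm Im}(\mathbf u_X)$ into a subcategory of $\mathcal S_{dR}(X)$ whose underlying bundles are trivial), and part (3) via the canonical-connection tensor functor of Remark \ref{rem2}. The only blemish is the final variance remark in your part (3): $U\circ\mathcal R={\rm id}$ yields $\mathbf v_X\circ\mathbf u_X={\rm id}$ on $\pi^{\rm S}(X,\,x_0)$, not on $\Sigma(X,\,x_0)$ --- which is in fact precisely the left-inverse property the theorem asserts.
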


\begin{proof}
The morphism $$\mathbf q_X\,:\,\varpi(X,\,x_0)\,\longrightarrow\,\Theta(X,\,x_0)$$ in \eqref{e6} factors through 
$\mathbf j_X$ so that the fact that ${\mathbf q}_X$ is a quotient morphism shows that ${\mathbf j}_X$ is likewise. 
That ${\mathbf u}_X$ is a closed immersion follows from the standard criterion \cite[p.~139, Proposition
2.21]{DMOS} and Theorem \ref{04.05.2020--3}.

To show that $\mathrm{Ker}({\mathbf q}_X)$ is the normal closure of ${\rm Im}({\mathbf u}_X)$
inside $\Sigma(X,\,x_0)$, we verify
condition (2) in Lemma \ref{03.09.2020--1}. Let $$\rho\,:\,\Sigma(X,\,x_0)\,\longrightarrow\, G$$ be a quotient
morphism such that $\rho\circ{\bf u}_X$ is trivial. 
Let \[\xi\,:\,{\rm Rep}_{\mathbb C}(\Sigma(X,\,x_0))\,\longrightarrow\,\mathcal S_{dR}(X)
\]
be an inverse to the tensor equivalence $\bullet|_{x_0}$ \cite[I.4.4]{Sa} and let
${\rm Res}_\rho$ be the functor restricting representations from of $G$ to $\Sigma(X,\,x_0)$. It follows that,
for each $V\,\in\, {\rm Rep}_{\mathbb C}(G)$, the vector bundle underlying $\xi\circ{\rm Res}_{\rho}(V)$ is
trivial, which means that $\xi\circ{\rm Res}_{\rho}$ is in fact a functor to $\mathcal T_{dR}(X)$. Passing
to group schemes we obtain a morphism $\Theta(X,\,x_0)\,\longrightarrow\, G$ factoring $\rho$ \cite[II.3.3.1, 148ff]{Sa}. 

Assigning to any pseudostable vector bundle $E\, \in\, {\rm Vect}_0^{\rm s}(X)$ the canonical
integrable holomorphic connection on $E$ (see Theorem \ref{04.05.2020--3}(2)), a functor is obtained; call it 
$${\mathbb V}\,:\,\mathrm{Vect}_0^{\rm s}(X)\,\longrightarrow\,\mathcal S_{dR}(X)\, .$$
This $\mathbb V$ is a tensor functor (see Remark \ref{rem2}). Hence ${\mathbb V}$ produces the 
section $\mathbf v_X$ to $\mathbf u_X$.
\end{proof}

\begin{proposition}Suppose that $X$ is a compact Riemann surface of genus at least two. Then
${\rm Im}(\mathbf{u}_X)$ is not normal in $\Sigma(X,\,x_0)$.
\end{proposition}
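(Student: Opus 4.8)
The plan is to argue by contradiction, combining a standard fact about invariants of normal subgroup schemes with one explicit object of $\mathcal S_{dR}(X)$. Suppose that $N:={\rm Im}(\mathbf{u}_X)$ were normal in $\Sigma(X,x_0)$; by Theorem~\ref{04.05.2020--5}(1) this $N$ is a genuine closed subgroup scheme, so the hypothesis makes sense. First I would recall the elementary fact --- true for affine group schemes over $\mathbb C$ with the same one-line proof as for abstract groups --- that if $N$ is a closed normal subgroup scheme of an affine group scheme $G$ and $V$ is a $G$-module, then the subspace $V^N$ of $N$-invariants is a $G$-submodule of $V$: for $g\in G$, $n\in N$ and $v\in V^N$ one has $n(gv)=g(g^{-1}ng)v=gv$ since $g^{-1}ng\in N$. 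Applied to $G=\Sigma(X,x_0)$, normality of $N$ would force: for every object $(E,\nabla)$ of $\mathcal S_{dR}(X)$, the subspace $(E|_{x_0})^N\subset E|_{x_0}$ is a sub-$\Sigma(X,x_0)$-module, i.e.\ it is the fibre at $x_0$ of a $\nabla$-stable holomorphic subbundle of $E$.

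Next I would identify $(E|_{x_0})^N$ concretely. Since $\mathbf{u}_X$ is induced by the forgetful functor $U\colon\mathcal S_{dR}(X)\to{\rm Vect}^{\rm s}_0(X)$, restricting along $\mathbf{u}_X$ the $\Sigma(X,x_0)$-module attached to $(E,\nabla)$ returns precisely the $\pi^{\rm S}(X,x_0)$-module attached to the pseudostable bundle $E=U(E,\nabla)$. Hence, using the tautological identity $V^H=\mathrm{Hom}_H({\mathbf 1},V)$, the space $(E|_{x_0})^N$ equals $\mathrm{Hom}_{{\rm Vect}^{\rm s}_0(X)}(\mathcal O_X,E)=H^0(X,E)$, embedded in $E|_{x_0}$ by evaluation at $x_0$; equivalently, $(E|_{x_0})^N$ is the fibre at $x_0$ of the subsheaf of $E$ spanned by its global holomorphic sections, which on a Riemann surface is the largest holomorphically trivial subbundle of $E$. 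Thus the assumed normality would imply that this trivial subbundle is $\nabla$-stable for \emph{every} $(E,\nabla)\in\mathcal S_{dR}(X)$, and it remains to contradict this.

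For the counterexample I would use that $X$ has genus $g\ge 2$: pick $L\in\mathrm{Pic}^0(X)$ with $L\not\simeq\mathcal O_X$ and put $E=\mathcal O_X\oplus L$, a polystable bundle of degree zero, hence pseudostable; any holomorphic connection on it is integrable, so such pairs $(E,\nabla)$ lie in $\mathcal S_{dR}(X)$. Since $H^0(X,L)=0$ one has $H^0(X,E)=\mathbb C\cdot(1,0)$, which spans exactly the subbundle $\mathcal O_X\oplus 0$. Writing a connection on $E$ in block form relative to $E=\mathcal O_X\oplus L$, its lower-left entry ranges over $H^0(X,L\otimes\Omega^1_X)$, and by Riemann--Roch $h^0(X,L\otimes\Omega^1_X)=g-1\ge 1$; choosing $\nabla$ whose lower-left entry is a nonzero section $\gamma$, the image under $\nabla$ of a local frame of the summand $\mathcal O_X$ acquires a nonzero component along $L\otimes\Omega^1_X$. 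Therefore $\mathcal O_X\oplus 0$ is not $\nabla$-stable, contradicting the conclusion of the second paragraph; hence ${\rm Im}(\mathbf{u}_X)$ is not normal in $\Sigma(X,x_0)$.

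The delicate point is the bookkeeping in the second paragraph --- checking that restriction along $\mathbf{u}_X$ genuinely forgets the connection and hands back the underlying pseudostable bundle as a $\pi^{\rm S}(X,x_0)$-module, and that the $\pi^{\rm S}(X,x_0)$-invariants of a pseudostable bundle $E$ are exactly the evaluations at $x_0$ of $H^0(X,E)$. Both are formal consequences of the Tannakian construction and of the definitions of $U$ and $\mathbf{u}_X$, and for the very concrete bundle $E=\mathcal O_X\oplus L$ they are transparent; the cohomological inputs $H^0(X,L)=0$ and $h^0(X,L\otimes\Omega^1_X)=g-1$ are routine. Note that the argument genuinely uses $g\ge 2$: for $X$ of genus $\le 1$ the off-diagonal blocks of any holomorphic connection on $\mathcal O_X\oplus L$ vanish, so this obstruction disappears.
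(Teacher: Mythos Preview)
Your proof is correct and follows essentially the same route as the paper: assume normality, deduce that for every $(E,\nabla)\in\mathcal S_{dR}(X)$ the largest trivial subbundle of $E$ is $\nabla$-stable, and contradict this with the explicit connection on $\mathcal O_X\oplus L$ (for $L\in\mathrm{Pic}^0(X)\setminus\{\mathcal O_X\}$) having a nonzero off-diagonal block in $H^0(X,L\otimes\Omega^1_X)$. Your write-up is a bit more explicit than the paper's in justifying the Tannakian identification $(E|_{x_0})^{N}\simeq H^0(X,E)$ and in invoking Riemann--Roch to get $h^0(X,L\otimes\Omega_X^1)=g-1\ge1$, but the argument is the same.
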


\begin{proof}
If ${\rm Im}(\mathbf{u}_X)$ is normal, then, for each representation $$\rho:\Sigma(X,\,x_0)\,\longrightarrow\,
{\rm GL}(V)\, ,$$ the subspace $V^{\pi^S(X,\,x_0)}$ is invariant under $\Sigma(X,\,x_0)$. This translates into the
following property on the side of connections. Given $(E,\,\nabla)\,\in\, {\mathcal S}_{dR}(X)$, the
largest trivial subbundle of $E$ is preserved by $\nabla$.

Let $L$ be a non-trivial invertible sheaf affording an integrable holomorphic connection
$$D\,:\,L\,\longrightarrow\, L\otimes\Omega_X^1\, .$$ In addition, let us choose $L$ satisfying
$H^0(X,\, L\otimes\Omega_X^1)\,\not=\,0$ (this is possible given the hypothesis on the genus).
Let $$\varphi\,\in\, H^0(X,\, L\otimes\Omega_X^1)\setminus\{0\}$$ and consider the connection $\nabla$ on
$\mathcal O_X\oplus L$ defined, on an unspecified open subset, by
\[\nabla(a,\,\ell)\,=\,(da,\,a\varphi+D\ell)\, . 
\] 
Then, ${\mathcal O}_X\oplus\{0\}\,\subset\,\mathcal O_X\oplus L$ is the largest trivial subbundle, and
$\nabla(1,\,0)\,=\,(0,\,\varphi)$, meaning that $\mathcal O_X\oplus\{0\}$ is not preserved by $\nabla$.
\end{proof}

\begin{remark}\label{rem3}
Let $\widetilde{{\rm Vect^{\rm s}}}_0(X)$ be the full subcategory of
${\rm Vect}_0^{\rm s}(X)$ defined by all holomorphic vector bundles
in ${\rm Vect}_0^{\rm s}(X)$ that are direct sum of holomorphic line bundles. It is a Tannakian
subcategory, and defines a quotient group scheme
$$
\pi^{\rm S}(X,\,x_0)\, \longrightarrow\, \widetilde{\pi}^{\rm S}(X,\, x_0)
$$
of $\pi^{\rm S}(X,\,x_0)$. A result similar to Theorem \ref{04.05.2020--5} can be 
deduced by replacing $\Sigma$ with $\Delta$ (constructed in \eqref{e2l}) and $\pi^S$ with $\widetilde{\pi}^{\rm S}$.
\end{remark}

\section*{Acknowledgements}

We thank the referees for their helpful comments.
We thank Carlos Simpson for a useful discussion. The first and third named authors thank
International Center for Theoretical Sciences, Bangalore, for hospitality; they were also
partially supported by the French government through the UCAJEDI Investments in the 
Future project managed by the National Research Agency (ANR) with the reference number 
ANR2152IDEX201. 
The first-named author is partially supported by a J. C. Bose Fellowship, and
school of mathematics, TIFR, is supported by 12-R$\&$D-TFR-5.01-0500.

\end{document}